\theoremstyle{plain}
\newtheorem{theorem}{Theorem}
\newtheorem{lemma}{Lemma}
\newtheorem{corollary}{Corollary}
\newtheorem{proposition}{Proposition}
\theoremstyle{remark}
\newtheorem{remark}{Remark}
\newtheorem{definition}{Definition}
\newtheorem{example}{Example}
\newtheorem{question}{Question}
\newcommand{\tripnorm}{\vert
\vert
\vert}
\newcommand{\Bigtripnorm}{\Big\vert
\Big\vert
\Big\vert}
\newcommand{\field}{\mathbb K}
\newcommand{\nat}{\mathbb N}
\newcommand{\real}{\mathbb R}
\newcommand{\complex}{\mathbb C}
\newcommand{\asplundop}{\ensuremath{\mathscr{D}}}
\newcommand{\compactop}{\ensuremath{\mathscr{K}}}
\newcommand{\ssop}{\ensuremath{\mathscr{S}}}
\newcommand{\sepop}{\ensuremath{\mathscr{X}}}
\newcommand{\szlenkop}{\ensuremath{\mathscr{S\negthinspace Z}}}
\newcommand{\allop}{\ensuremath{\mathscr{L}}}
\newcommand{\wcompactop}{\ensuremath{\mathscr{W}}}
\newcommand{\opideal}{\ensuremath{\mathscr{I}}}
\newcommand{\ord}{{\textbf{Ord}}}
\newcommand{\spn}{\mathop{span}}
\newcommand{\diam}{\mathop{diam}}
\newcommand{\dist}{\mathop{dist}}
\newcommand{\MIN}{\mathop{MIN}}
\newcommand{\MAX}{\mathop{MAX}}
\newcommand{\bee}{\mathcal{B}}
\newcommand{\cee}{\mathcal{C}}
\newcommand{\eff}{\mathcal{F}}
\newcommand{\queue}{\mathcal{Q}}
\newcommand{\arr}{\mathcal{R}}
\newcommand{\ess}{\mathcal{S}}
\newcommand{\tee}{\mathcal{T}}
\newcommand{\yoo}{\mathcal{U}}
\newcommand{\dubyoo}{\mathcal{W}}
\newcommand{\scrjay}{\mathscr{J}}
\providecommand{\keywords}[1]{\textbf{\textit{Keywords---}} #1}
\providecommand{\msc}[1]{\textbf{\textit{Mathematics Subject Classification---}} #1}
\begin{document}

\title{Absolutely convex sets of large Szlenk index}

\author{Philip A.H. Brooker}

\maketitle

\begin{abstract}
Let $X$ be a Banach space and $K$ an absolutely convex, weak$^\ast$-compact subset of $X^\ast$. We study consequences of $K$ having a large or undefined Szlenk index and subsequently derive a number of related results concerning basic sequences and universal operators. We show that if $X$ has a countable Szlenk index then $X$ admits a subspace $Y$ such that $Y$ has a basis and the  Szlenk indices of $Y$ are comparable to the Szlenk indices of $X$. If $X$ is separable, then $X$ also admits subspace $Z$ such that the quotient $X/Z$ has a basis and the Szlenk indices of $X/Z$ are comparable to the Szlenk indices of $X$. We also show that for a given ordinal $\xi$ the class of operators whose Szlenk index is not an ordinal less than or equal to $\xi$ admits a universal element if and only if $\xi<\omega_1$; W.B. Johnson's theorem that the formal identity map from $\ell_1$ to $\ell_\infty$ is a universal non-compact operator is then obtained as a corollary. Stronger results are obtained for operators having separable codomain.
\end{abstract}

\keywords{Szlenk index, Asplund space, basic sequence, universal operator}

\msc{46B20, 46B03, 46B15, 47B10}

\section{Introduction}\label{squintro}
The Szlenk index is an ordinal index that measures the difference between the norm and weak$^\ast$ topologies on subsets of dual Banach spaces. It was introduced by Szlenk\autocite{Szlenk1968} to solve (in the negative) the problem of whether there exists a separable, reflexive Banach space whose subspaces exhaust the class of separable, reflexive Banach spaces up to isomorphism. Since then the Szlenk index has found many uses in the study of Banach spaces and their operators, as outlined in the surveys of \textcite{Lancien2006} and \textcite{Rosenthal2003}.

In the current paper we study the Szlenk index in two main contexts, the first of these being the theory of basic sequences in Banach spaces. Our work on basic sequences and the Szlenk index is based on the classical method of Mazur\autocite[See e.g.][Section~1.5]{Albiac2016} for producing subspaces with a basis and on the more recent method of Johnson and Rosenthal\autocite{Johnson1972} for producing quotients with a basis (a dual version of the Mazur technique). We extend previous work in this area by Lancien\autocite{Lancien1996} and Dilworth-Kutzarova-Lancien-Randrianarivony\autocite{Dilworth2017}. The other main motivation for our paper is the study of \emph{universal} operators. In \cref{structureback,uniback} we provide relevant background on these two areas of study and explain the main contributions of the current paper to these topics, along the way providing an outline of the content of \cref{mainlemmasection,basissection,universalsection,kneesbees,nonspear}. Basic notation and terminology for Banach space theory is set out in \cref{notation}, whilst background information on more specialised topics such as Szlenk indices, trees and operators acting on Banach spaces over trees will be provided in \cref{backgrounder}.

\subsection{Structure of Banach spaces with large Szlenk index}\label{structureback}
Several authors have already studied the structure of subspaces and quotients of Banach spaces having Szlenk index larger then a given ordinal. We shall now sketch the previous such results of most relevance to us and explain the contributions of the current paper to this topic. The first result of interest to us is the following result due to G.~Lancien\autocite[Proposition~3.1]{Lancien1996}.

\begin{proposition}\label{margeincharge}
Let $X$ be a Banach space and $\xi<\omega_1$. If $Sz(X)>\xi$ then there exists a separable subspace $Y$ of $X$ such that $Sz(Y)>\xi$.
\end{proposition}

To prove \cref{margeincharge} Lancien showed\autocite[Lemma~3.4]{Lancien1996} that for a suitable tree $\tee$ of rank $\xi+1$, the estimate $Sz(X,\epsilon)>\xi$ (which follows for some $\epsilon>0$ from the estimate $Sz(X)>\xi$) implies the existence of families of vectors $(x_t)_{t\in\tee}\subseteq B_X$ and $(x_t^\ast)_{t\in\tee}\subseteq B_{X^\ast}$ whose construction depends on $\epsilon$ and which satisfy certain properties that bear witness to the fact that $Sz(X)>\xi$. Without giving the precise details of Lancien's construction here, we mention that the subspace $Y$ is taken to be the closed linear span in $X$ of the family $(x_t)_{t\in\tee}$. Recently, Dilworth, Kutzarova, Lancien and Randrianarivony\autocite[Proposition~3.1(i)]{Dilworth2017} have adapted Lancien's construction to show that, under the additional hypothesis that $X$ is reflexive, $(x_t)_{t\in\tee}$ may be assumed to be a basic sequence for a suitable enumeration of $\tee$. In \cref{mainlemmasection} we devise a further refinement of Lancien's construction, the precise details of which are captured in the statement of our main technical result, \cref{oopsblah}. A number of consequences of \cref{oopsblah} follow, including the main results of the current paper, namely \cref{firstbasistheorem,drainyourflagon,countuniv,sepseekay}.

Our first application of \cref{oopsblah} is the following result, proved in \cref{basissection}, which asserts that if $X$ is an infinite-dimensional Banach space with countable Szlenk index, then $X$ admits a subspace with a basis and with $\epsilon$-Szlenk indices comparable to the $\epsilon$-Szlenk indices of $X$: 

\begin{theorem}\label{firstbasistheorem}
Let $X$ be an infinite-dimensional Banach space such that $Sz(X)<\omega_1$ and let $\delta>0$. Then there exists a subspace $Y\subseteq X$ such that $Y$ has a shrinking basis with basis constant not exceeding $1+\delta$ and such that
\begin{equation}\label{wuppertare}
\forall\,\epsilon>0\quad Sz\Big(Y,\frac{\epsilon}{66}\Big)\geq Sz(X,\epsilon),
\end{equation}
hence $Sz(Y)=Sz(X)$.
\end{theorem}

To compare \cref{firstbasistheorem} with the earlier results of \textcite{Lancien1996} and \textcite{Dilworth2017}, first note that it follows easily from \cref{margeincharge} that if $X$ is a Banach space with $Sz(X)<\omega_1$, then there exists a separable subspace $Y$ of $X$ such that $Sz(Y)=Sz(X)$. For instance, take $Y$ to be the closed linear span of $\bigcup_{n=1}^\infty Y_n$ in $X$, where, for each $n\in\nat$, $Y_n$ is a separable subspace of $X$ with $Sz(Y_n)>Sz(X,1/n)-1$. However, even in the case that $X$ is reflexive one cannot in general conclude from Proposition~3.1(i) of \textcite{Dilworth2017} (or from \cref{margeincharge}) that $X$ admits a subspace with a basis \emph{and} with Szlenk index equal to the Szlenk index of $X$. Indeed, from Proposition~3.1(i) of \textcite{Dilworth2017} we may only deduce that the subspace $Y$ with a basis satisfies $Sz(Y)\geq \xi\omega$ (on the other hand, if $X$ is reflexive and $Sz(X)=\omega^{\alpha+1}$ for some $\alpha<\omega_1$, then it follows from Proposition~3.1(i) of \textcite{Dilworth2017} that $X$ admits a subspace $Y$ with a basis and satisfying $Sz(Y)=Sz(X)$; to see this, take a subspace $Y$ with a basis such that $Sz(Y)>\omega^\alpha$).

One of the most important features of \cref{oopsblah} is that the construction of families of vectors $(x_t)$ and $(x_t^\ast)$ in the proof of the theorem depends on a family $(\epsilon_n)_{n<\omega}$ of (possibly infinitely many distinct) epsilons and associated estimates $Sz(K,\epsilon_n)>\xi_n$; the earlier such constructions from \textcite{Lancien1996} and \textcite{Dilworth2017} depend instead on consideration of just a single $\epsilon$ and an associated estimate $Sz(X,\epsilon)>\xi$. The greater generality taken in our approach plays a key role in the proof of \cref{firstbasistheorem}. In particular whereas the constructions underpinning \cref{margeincharge} above and Proposition~3.1(i) of \textcite{Dilworth2017} yield an estimate of the type shown at \cref{wuppertare} for single $\epsilon>0$, with our approach we take the sequence $(\epsilon_n)_{n<\omega}$ in the statement of \cref{oopsblah} to be dense in $(0,\infty)$ and apply a straightforward density argument to obtain the estimate shown at \cref{wuppertare} for all $\epsilon>0$, which is enough to conclude the final assertion of \cref{firstbasistheorem} (without any reflexivity hypothesis).

We now turn our attention to quotients. Under the additional assumption that $X$ is separable, \cref{oopsblah} yields the existence of a quotient of $X$ having a basis and with $\epsilon$-Szlenk indices comparable to the $\epsilon$-Szlenk indices of $X$, as per the following result established in \cref{basissection}. 

\begin{theorem}\label{drainyourflagon}
Let $X$ be an infinite-dimensional Banach space with separable dual and let $\delta>0$. Then there exists a subspace $Z\subseteq X$ such that $X/Z$ has a shrinking basis with basis constant not exceeding $1+\delta$ and such that
\begin{equation}\label{scupperscare}
\forall\,\epsilon>0\quad Sz\Big(X/Z,\frac{\epsilon}{9}\Big)\geq Sz(X,\epsilon)\,,
\end{equation}
hence $Sz(X/Z)=Sz(X)$.
\end{theorem}

\cref{drainyourflagon} should be compared with the following earlier result of G.~Lancien\autocite[Proposition~3.5]{Lancien1996}.
\begin{proposition}\label{bargeincharge}
Let $X$ be a separable Banach space and $\xi<\omega_1$. If $Sz(X)>\xi$ then there exists a subspace $Z$ of $X$ such that $X/Z$ has a shrinking basis and $Sz(X/Z)>\xi$.
\end{proposition}

To prove \cref{bargeincharge} Lancien combined the construction developed in the proof of Lemma~3.4 of \textcite{Lancien1996} (previously mentioned in the discussion following \vref{margeincharge}) with the techniques developed by Johnson and Rosenthal\autocite{Johnson1972} for constructing weak$^\ast$-basic sequences in dual Banach spaces. We take a similar approach in the proof of \cref{oopsblah}, with the main difference between \cref{drainyourflagon} and \cref{bargeincharge} again arising from the more general approach taken in the construction of the families of vectors $(x_t)$ and $(x_t^\ast)$ in the proof of the \cref{oopsblah}, in particular the dependence on an infinite family $(\epsilon_n)_{n<\omega}$ of (possibly distinct) epsilons and associated estimates $Sz(K,\epsilon_n)>\xi_n$ rather than just a single $\epsilon$ and associated estimate $Sz(X,\epsilon)$.

Our comparison of \cref{drainyourflagon} with \cref{bargeincharge} is similar to our comparison of \cref{firstbasistheorem} with \cref{margeincharge}. In general one may deduce from \cref{bargeincharge} only that $Sz(X/Z)\geq \xi\omega$; the stronger assertion that $Sz(X/Z)=Sz(X)$ need not follow except in the case that $Sz(X)=\omega^{\alpha+1}$ for some ordinal $\alpha$ and we take $Z$ to satisfy $Sz(X/Z)>\omega^\alpha$. By contrast, \cref{drainyourflagon} provides the conclusion that $Sz(X/Z)=Sz(X)$ in all cases.

\begin{remark}
Trees have been used by a number of authors to study and compare a variety of ordinal indices on Banach spaces. For instance, the work of \textcite{Alspach2005} defines and studies a number of local $\ell_1$-indices and compares these indices with one another and with the Szlenk index.
\end{remark}

\subsection{Universal operators}\label{uniback}
The initial motivation for writing the current paper was a desire to study the Szlenk index in the context of the problem of finding universal elements for certain subclasses of the class $\allop$ of all (bounded, linear) operators between Banach spaces. For operators $T\in\allop(X,Y)$ and $S\in\allop(W,Z)$, where $W,X,Y$ and $Z$ are Banach spaces, we say that $S$ \emph{factors through} $T$ (or, equivalently, that $T$ \emph{factors} $S$) if there exist $U\in \allop(W,X)$ and $V\in\allop(Y,Z)$ such that $VTU=S$. With this terminology, for a given subclass $\mathscr{C}$ of $\allop$ we say that an operator $\Upsilon\in\mathscr{C}$ is \emph{universal for} $\mathscr{C}$ if $\Upsilon$ factors through every element of $\mathscr{C}$. Typically $\mathscr{C}$ will be the complement $\complement\opideal$ of an operator ideal $\opideal$ in the sense of Pietsch\autocite{Pietsch1980} (that is, $\complement\opideal$ consists of all elements of $\allop$ that do not belong to $\opideal$), or perhaps the restriction $\mathscr{J}\cap \complement\opideal$ of $\complement\opideal$ to a large subclass $\mathscr{J}$ of $\allop$; e.g., $\mathscr{J}$ might denote a large operator ideal or the class of all operators having a specified domain or codomain. One may think of a universal element of the class $\mathscr{C}$ as a minimal element of $\mathscr{C}$ that is `fixed' or `preserved' by each element of $\mathscr{C}$. 

The notion of universality for a class of operators goes back to the work of Lindenstrauss and {Pe{\l}czy{\'n}ski}\autocite[Theorem~8.1]{Lindenstrauss1968}, who obtained the following result.

\begin{theorem}\label{LPthm}
Let $X$ and $Y$ be Banach spaces and suppose $T:X\longrightarrow Y$ is a non-weakly compact operator. Then $T$ factors the (non-weakly compact) summation operator $\Sigma: (a_n)_{n=1}^\infty \mapsto (\sum_{i=1}^n a_i)_{n=1}^\infty$ from $\ell_1$ to $\ell_\infty$. In particular, $\Sigma$ is universal for the class of non-weakly compact operators. 
\end{theorem}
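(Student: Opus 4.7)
The plan is to factor $\Sigma$ through $T$ by constructing bounded sequences $(x_n)_{n=1}^\infty \subset X$ and $(y_m^*)_{m=1}^\infty \subset Y^*$ satisfying the exact triangular biorthogonality relation
\[
 \langle T x_n, y_m^* \rangle \;=\; \mathbf{1}_{\{m \leq n\}}, \qquad m,n \in \nat.
\]
Given such sequences, the operators $U \colon \ell_1 \to X$ defined by $U e_n := x_n$ and $V \colon Y \to \ell_\infty$ defined by $V y := (y_m^*(y))_{m=1}^\infty$ are bounded, and a direct coordinate computation gives $V T U e_n = (\mathbf{1}_{\{m \leq n\}})_m = \Sigma e_n$; by density and continuity, $V T U = \Sigma$ on all of $\ell_1$.

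To produce the biorthogonal system I would exploit the standard bidual characterization of weak compactness: $T$ is weakly compact if and only if $T^{**}(X^{**}) \subseteq Y$. Non-weak-compactness of $T$ therefore yields $x^{**} \in B_{X^{**}}$ with $y^{**} := T^{**} x^{**} \in Y^{**} \setminus Y$, so $\eta := d(y^{**}, Y) > 0$. Applying Hahn--Banach to the quotient $Y^{**}/Y$ produces a separating functional $\Phi \in B_{Y^{***}}$ with $\Phi|_Y \equiv 0$ and $\langle y^{**}, \Phi \rangle = \eta$. I would then construct $(x_n)$ and $(y_m^*)$ by a simultaneous induction using Goldstine's theorem twice. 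On the primal side, $B_X$ is weak$^\ast$-dense in $B_{X^{**}}$, which at stage $n$ lets me choose $x_n \in B_X$ with $y_k^*(T x_n)$ arbitrarily close to $\langle y^{**}, y_k^* \rangle$ for each $k \leq n$. On the dual side, $B_{Y^*}$ is weak$^\ast$-dense in $B_{Y^{***}}$, which at stage $m$ lets me choose $y_m^* \in B_{Y^*}$ with $y_m^*(T x_k)$ close to $\Phi(T x_k) = 0$ for each $k < m$ and with $\langle y^{**}, y_m^* \rangle$ close to $\eta$. Finite-dimensional Hahn--Banach corrections --- projecting each $y_m^*$ into the annihilator of $\spn(T x_1, \ldots, T x_{m-1})$, and solving a small linear system to adjust each $x_n$ so that $y_k^*(T x_n) = \eta$ exactly for $k \leq n$ --- together with rescaling by $\eta^{-1}$, then deliver the exact biorthogonality displayed above.

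The main technical obstacle is carrying out the exactification step without losing boundedness: each correction must be small enough that the accumulated perturbations across all stages remain controlled. This is handled by the standard sort of bookkeeping --- choose the Goldstine tolerance at stage $n$ to decay like $2^{-n}$, divided by suitable basis constants for $(T x_k)_{k < n}$, so that both the Hahn--Banach extensions (for the $y_m^*$) and the linear-system corrections (for the $x_n$) have summable norms and the sequences $(x_n)$ and $(y_m^*)$ remain uniformly bounded. The conceptual core of the theorem --- that non-weak-compactness of $T$ always surfaces as a $\Sigma$-like upper-triangular biorthogonal pattern --- is contained in the first two paragraphs, with the passage from the bidual witness $y^{**} \in Y^{**} \setminus Y$ to the sequences $(x_n), (y_m^*)$ being the central structural insight.
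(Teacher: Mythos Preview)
The paper does not supply a proof of this theorem; it is quoted as Theorem~8.1 of \cite{Lindenstrauss1968} and used as background. So there is no ``paper's own proof'' to compare against.

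Your sketch follows the standard route and is essentially correct in outline: the bidual characterisation of weak compactness yields $y^{**}=T^{**}x^{**}\in Y^{**}\setminus Y$, a separating $\Phi\in Y^{***}$ vanishing on $Y$, and then alternating Goldstine approximations on both sides produce the triangular system. One point to watch is the order of operations in your inductive step. You want $y_m^\ast$ to annihilate $Tx_1,\ldots,Tx_{m-1}$ \emph{exactly} (via a Hahn--Banach correction) before choosing $x_m$, and you then want to correct $x_n$ so that $\langle T x_n, y_k^\ast\rangle$ equals a fixed nonzero value for $k\le n$. For the latter you need $T^\ast y_1^\ast,\ldots,T^\ast y_n^\ast$ to be linearly independent in $X^\ast$; this does follow from the triangular structure already built (the matrix $(\langle Tx_j, y_k^\ast\rangle)_{j,k\le n-1}$ is lower-triangular with diagonal entries bounded away from zero), but you should say so explicitly, and you should also note that the correction vector lies in a fixed finite-dimensional subspace whose basis constants you control, so that the $2^{-n}$ bookkeeping really does keep $\sup_n\Vert x_n\Vert<\infty$. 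With those details filled in, the argument goes through.
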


Since the publication of \textcite{Lindenstrauss1968} a number of results in a similar spirit to \cref{LPthm} have appeared in the literature. Perhaps the most well-known is the following result of W.B.~Johnson\autocite{Johnson1971a}, which is a special case of \cref{countuniv} of the current paper.

\begin{theorem}\label{Jthm}
Let $X$ and $Y$ be Banach spaces and suppose $T:X \longrightarrow Y$ is a non-compact operator. Then $T$ factors the (non-compact) formal identity operator from $\ell_1$ to $\ell_\infty$. In particular, the formal identity operator from $\ell_1$ to $\ell_\infty$ is universal for the class of non-compact operators. 
\end{theorem}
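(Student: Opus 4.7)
The plan is to produce, from any non-compact $T \in \allop(X,Y)$, bounded operators $U \in \allop(\ell_1, X)$ and $V \in \allop(Y, \ell_\infty)$ such that $VTU$ equals the formal identity $j : \ell_1 \to \ell_\infty$. Since $j$ sends the $n$-th standard $\ell_1$-basis vector $e_n$ to the sequence $(\delta_{nm})_{m=1}^\infty \in \ell_\infty$, the factorization $VTU = j$ is equivalent to producing a bounded sequence $(x_n) \subset X$ together with a uniformly norm-bounded sequence $(y_n^\ast) \subset Y^\ast$ satisfying the biorthogonality condition $y_n^\ast(Tx_m) = \delta_{nm}$ for all $n,m$. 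Given such data, $Ue_n := x_n$ extends by linearity and continuity to a bounded $U : \ell_1 \to X$ (with $\|U\| \le \sup_n \|x_n\|$), while $V(y) := (y_n^\ast(y))_{n=1}^\infty$ defines a bounded $V : Y \to \ell_\infty$ (with $\|V\| \le \sup_n \|y_n^\ast\|$), and by construction $VTU$ agrees with $j$ on the basis $(e_n)$, hence on all of $\ell_1$.

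To produce the two sequences, I would first exploit non-compactness of $T$ to select $(x_n) \subset B_X$ with $\inf_{n \ne m} \|Tx_n - Tx_m\| \ge \delta > 0$. The crux is then to further modify $(x_n)$ so that $(Tx_n)$ becomes a basic sequence in $Y$. If $(Tx_n)$ admits a weakly Cauchy subsequence, I would pass to the blocks $(x_{2n} - x_{2n-1})/2$; the resulting image sequence is weakly null and still seminormalized (norm at least $\delta/2$), so Bessaga--Pe{\l}czy{\'n}ski's selection principle furnishes a basic subsequence. Otherwise, Rosenthal's $\ell_1$ theorem provides a subsequence of $(Tx_n)$ equivalent to the $\ell_1$ basis, which is automatically basic. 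Either way, after relabelling, one obtains bounded $(x_n) \subset B_X$ with $(Tx_n)$ a basic sequence of some basis constant $K$ and with $\inf_n \|Tx_n\| \ge \delta' > 0$. The coordinate biorthogonal functionals on the closed linear span of $(Tx_n)$ then have norm at most $2K/\delta'$, and extending them to $Y$ via Hahn--Banach yields the required $(y_n^\ast) \subset Y^\ast$.

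The universal statement then follows by combining the factorization above with the observation that $j$ itself is non-compact. I expect the main obstacle to be precisely the construction of a basic subsequence from an arbitrary seminormalized image sequence: this forces either the case split above, invoking two distinct classical selection tools, or else a more unified perturbation argument built on Mazur-type reasoning. Everything else is essentially bookkeeping: once the biorthogonal system is in hand, the verification that the operators $U$ and $V$ do what is required is immediate from their definitions.
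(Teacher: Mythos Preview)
Your argument is correct. The biorthogonal-system reduction is exactly right, and the two-case extraction of a basic subsequence from the separated image sequence $(Tx_n)$ via Rosenthal's $\ell_1$ theorem and Bessaga--Pe{\l}czy{\'n}ski goes through as written; Hahn--Banach then supplies the uniformly bounded $(y_n^\ast)$.

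The paper, however, does not prove Theorem~\ref{Jthm} directly. It is obtained as the special case $\alpha=0$ of Theorem~\ref{countuniv}: one uses the identification $\compactop=\szlenkop_0$, so that a non-compact $T$ satisfies $Sz(T)\nleq\omega^0=1$, and then the general Szlenk-index machinery of Section~\ref{mainlemmasection} (Corollary~\ref{birthdaybun} feeding into Proposition~\ref{factorkar}) yields the factorisation of $\Sigma_{\tee^\star}$ for any blossomed $\tee$ with $\rho(\tee)\in[1,\omega)$; taking $\tee$ to be a root with countably many leaves gives $\tee^\star=(\nat,=)$ and $\Sigma_{\tee^\star}$ is precisely the formal identity $\ell_1\hookrightarrow\ell_\infty$.

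Your route is the classical one, close in spirit to Johnson's original argument, and is self-contained modulo the two selection principles you invoke. (Rosenthal's theorem is in fact heavier than needed here and postdates \cite{Johnson1971a}; a Mazur-type argument suffices to extract the basic subsequence, as you anticipate in your final paragraph.) The paper's route buys something different: it exhibits Johnson's theorem as the bottom rung of the entire ladder of universality results for $\complement\szlenkop_\alpha$, $\alpha<\omega_1$, at the cost of the substantial Szlenk-derivation apparatus built in Sections~\ref{backgrounder}--\ref{mainlemmasection}.
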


Another universality result of note, due to C. Stegall\autocite[Theorem~4]{Stegall1975}, is the existence of a universal non-Asplund operator. The Asplund operators have several equivalent definitions in the literature; in the current paper we say that an operator $T:X\longrightarrow Y$ is \textit{Asplund} if $(T\vert_Z)^\ast (Y^\ast)$ is separable for any separable subspace $Z\subseteq X$. We refer the reader to \textcite{Stegall1981} for further properties and characterisations of Asplund operators. Stegall's universal operator is defined in terms of the Haar system $(h_m)_{m=0}^\infty\subseteq C(\{ 0,1\}^\omega)$, where each factor $\{ 0,1\}$ is discrete and $\{0,1\}^\omega$ is equipped with its compact Hausdorff product topology. For the purpose of stating Stegall's result, we let $\mu$ denote the product measure on $\{0,1\}^\omega$ obtained by equipping each factor $\{ 0,1\}$ with its discrete uniform probability measure and let $H:\ell_1\longrightarrow L_\infty(\{0,1\}^\omega,\mu)$ be defined by setting $Hx=\sum_{m=1}^\infty x(m)h_{m-1}$ for each $x=(x(m))_{m=1}^\infty\in \ell_1$. Stegall's result is the following theorem.
\begin{theorem}\label{thewox}
Let $X$ and $Y$ be Banach spaces such that $X$ is separable and suppose $T\in\allop(X, Y)$ is such that $T^\ast(Y^\ast)$ is nonseparable. Then $H$ factors through $T$.
\end{theorem}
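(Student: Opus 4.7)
The plan is to produce from the hypothesis a dyadic tree of pairs $(x_t,y_t^\ast)_{t\in\{0,1\}^{<\omega}}\subseteq B_X\times B_{Y^\ast}$ whose $T$-pairings reproduce the values of the Haar system, and then to read off the factorisation $VTU=H$ from the tree. Since $X$ is separable and $T^\ast(Y^\ast)$ is nonseparable, $T$ fails to be Asplund; equivalently, $T^\ast(B_{Y^\ast})$ is not fragmented by the norm of $X^\ast$. A standard Stegall--Namioka slicing argument then yields $\delta>0$, a full dyadic tree $(y_t^\ast)\subseteq B_{Y^\ast}$ and witnesses $(x_t)\subseteq B_X$ with
\[ \langle y_{t\cdot 0}^\ast - y_{t\cdot 1}^\ast,\, Tx_t\rangle \;\geq\; \delta \qquad \text{for every } t\in\{0,1\}^{<\omega}. \]
Using the separability of $X$ (which renders bounded subsets of $X^\ast$ weak$^\ast$-metrizable on a countable dense set), a level-by-level recursive pruning produces a dyadic sub-tree on which, after rescaling each $y_t^\ast$ by a factor of order $\delta^{-1}$, the pairing $\langle y_s^\ast, Tx_t\rangle$ equals $+1$, $-1$, or $0$ according to whether $s$ extends $t\cdot 0$, extends $t\cdot 1$, or is incomparable with $t$ (for all $s,t$ with $|s|>|t|$); simultaneously, along every branch $\omega\in\{0,1\}^\omega$, the sequence $(y_{\omega|_n}^\ast)_n$ converges weak$^\ast$ to some $\eta_\omega\in B_{Y^\ast}$.

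Fix a bijection $m\mapsto t_{m-1}$ from $\{1,2,\ldots\}$ onto $\{0,1\}^{<\omega}$ compatible with the standard enumeration of the Haar system, so that $h_{m-1}=h_{t_{m-1}}$. Define $U\in\allop(\ell_1,X)$ by $Ue_m=x_{t_{m-1}}$, which is bounded of norm at most $1$. The stabilisation guarantees $\langle \eta_\omega, Tx_t\rangle = h_t(\omega)$ for all $t$ and $\omega$, while the branchwise convergence renders $\omega\mapsto \eta_\omega$ Borel measurable into $(B_{Y^\ast},w^\ast)$; setting $(Vy)(\omega):=\langle \eta_\omega,y\rangle$ therefore defines a bounded operator $V\in\allop(Y, L_\infty(\{0,1\}^\omega,\mu))$ of norm at most $1$. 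A direct computation then yields $(VTUe_m)(\omega)=\langle \eta_\omega, Tx_{t_{m-1}}\rangle = h_{t_{m-1}}(\omega) = (He_m)(\omega)$, i.e.\ $VTU=H$, as desired.

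The main obstacle is the refinement of the initial $\delta$-separated tree to one on which the pairings $\langle y_s^\ast, Tx_t\rangle$ stabilise \emph{exactly} to $\pm 1$ or $0$ and on which the branchwise weak$^\ast$-limits actually exist, rather than merely being cluster points. This must be carried out by a diagonal, level-by-level extraction that at each stage controls only finitely many pairings and finitely many ``coordinate directions'' in the separable space $X$, while simultaneously preserving the $\delta$-separation of children produced at every surviving node by the slicing step. The interplay between the Asplund-fragmentation machinery (which keeps supplying $\delta$-separated children) and this combinatorial pruning (which enforces the Haar-valued stabilisation) is the technical heart of the argument; once it is in place, the construction of $U$ and $V$ and the verification $VTU=H$ are essentially automatic.
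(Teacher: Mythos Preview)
The paper does not prove this theorem; it is quoted as Theorem~4 of \cite{Stegall1975} and used only as background, so there is no argument in the paper to compare your sketch against.

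On the sketch itself: the overall architecture (non-fragmentability of $T^\ast(B_{Y^\ast})$, a dyadic tree of slices, branchwise weak$^\ast$-limits, and the operators $U$, $V$) is the right one and is essentially Stegall's. The genuine gap is the claim that a level-by-level pruning, together with a rescaling of the $y_t^\ast$, yields pairings $\langle y_s^\ast,Tx_t\rangle$ that are \emph{exactly} $+1$, $-1$ or $0$. Pruning only selects a subtree; it does not alter the numbers $\langle y_s^\ast,Tx_t\rangle$, and a single rescaling of $y_s^\ast$ cannot simultaneously force several of these pairings (one for each ancestor $x_t$) to prescribed values. In particular, for $s\perp t$ there is no mechanism in your construction that drives $\langle y_s^\ast,Tx_t\rangle$ to $0$, yet this is exactly what $h_t(\omega)=0$ demands of the limit. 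If instead one tries to impose the constraints at the moment $y_s^\ast$ is chosen, one is intersecting the weak$^\ast$-slice supplied by non-fragmentability with finitely many affine hyperplanes $\{\langle\,\cdot\,,Tx_t\rangle=c_t\}$; there is no reason this intersection should retain diameter $>\delta$, so the recursion may stall.

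What survives is \emph{approximate} stabilisation: one can arrange $|\langle y_s^\ast,Tx_t\rangle-h_t(\omega)|<\varepsilon_{|t|}$ along branches with $\sum\varepsilon_n$ small (by choosing $y_s^\ast$ in ever-smaller weak$^\ast$-neighbourhoods on the finite set $\{Tx_t:|t|<|s|\}$, and choosing each $x_t$ nearly annihilated by the functionals already built on incomparable nodes). The resulting $VTU$ then equals $H+K$ with $\|K\|<1$, and one concludes by showing that $H$ factors through $H+K$ (for instance because $(h_m)$ is an unconditional basis of its span with constant $1$, so a small perturbation is still equivalent to it). This perturbation step, together with the more careful interleaving of the choices of $x_t$ and $y_s^\ast$, is the substance you have deferred to the phrase ``technical heart''; without it the argument is incomplete.
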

Since the domain of $H$, namely $\ell_1$, is separable, and since $H^\ast$ has non-separable range, $H$ is a non-Asplund operator. It therefore follows from \cref{thewox} that $H$ is a universal non-Asplund operator. We note that a different universal non-Asplund operator has been obtained by the author of the current paper using the techniques developed here in the context of studying the Szlenk index\autocite{Brooker2017}.

Other universality results besides those mentioned above can be found in the work of \textcite{Brooker2017}, \textcite{Cilia2015}, \textcite{Dilworth1985}, \textcite{Girardi1997}, \textcite{Hinrichs2000}, \textcite{Oikhberg2016}, and the Handbook survey on operator ideals by \textcite{Diestel2001}. \cref{Jthm} above has been applied in the study of information-based complexity by \textcite{Hinrichs2013}.

\begin{remark}
K.~Beanland and R.M.~Causey\autocite[arXiv:1711.09244]{BeanlandCausey} have recently attained universality results for further classes of operators defined in terms of operator ideals.
\end{remark}

In \cref{universalsection} we turn our attention to applying \cref{oopsblah} to the study of universal operators for classes defined in terms of a bound on the Szlenk index. Our main result to this end is the following theorem. 

\begin{theorem}\label{countuniv} 
Let $X$ and $Y$ be Banach spaces, $\alpha<\omega_1$, $T\in\allop(X,Y)\setminus \szlenkop_\alpha(X,Y)$ and $(\tee,\preceq)$ a countably infinite, rooted, well-founded tree with $\rho(\tee)<\omega^{\alpha+1}$. Then $\Sigma_{\tee^\star}$ factors through $T$. Moreover if $\tee$ is blossomed and $\rho(\tee)\geq \omega^\alpha$, then $\Sigma_{\tee^\star}$ is universal for $\complement\szlenkop_\alpha$. It follows that, for an ordinal $\beta$, the class $\complement\szlenkop_\beta$ admits a universal element if and only if $\beta<\omega_1$.
\end{theorem}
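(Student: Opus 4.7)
The plan is to first prove the two main assertions (factoring and universality) and then deduce the iff as a consequence. For the first assertion: since $T\notin\szlenkop_\alpha$, we have $Sz(T)>\omega^\alpha$, and as defined Szlenk indices are of the form $\omega^\gamma$, this forces $Sz(T)\geq\omega^{\alpha+1}$ or $Sz(T)=\infty$. Together with $\rho(\tee)-1<\omega^{\alpha+1}$, this yields some $\epsilon>0$ with $s_\epsilon^{\rho(\tee)-1}(T^\ast B_{Y^\ast})\neq\emptyset$. Feeding $K:=T^\ast B_{Y^\ast}$ and $\xi:=\rho(\tee)-1$ into Corollary~\ref{birthdaybun} produces families $(x_t^\ast)_{t\in\tee}$ and $(x_t)_{t\in\tee}$ satisfying the biorthogonality pattern (\ref{piorpogger}), which is exactly condition (ii) of Proposition~\ref{factorkar}. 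Proposition~\ref{factorkar} thus gives $\Sigma_\tee$ factoring through $T$, and Proposition~\ref{subtreefac} (applied with $\tee^\star$ as a subtree of $\tee$) transfers this factoring to $\Sigma_{\tee^\star}$.

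For the moreover, I need both that $\Sigma_{\tee^\star}\in\complement\szlenkop_\alpha$ and that $\Sigma_{\tee^\star}$ factors through every $T\in\complement\szlenkop_\alpha$; the latter is immediate from the first assertion applied with $\tee$ itself. For the former, Proposition~\ref{lbszlenk} gives $Sz(\Sigma_{\tee^\star},\epsilon)\geq\rho(\tee)$ for $\epsilon\in(0,1)$. Since $\tee$ is rooted, $\rho(\tee)$ is a successor ordinal: for $\alpha\geq 1$, $\omega^\alpha$ is a limit ordinal, so $\rho(\tee)\geq\omega^\alpha$ strengthens to $\rho(\tee)>\omega^\alpha$; for $\alpha=0$, countable infiniteness of $\tee$ gives $\rho(\tee)\geq 2>1=\omega^0$. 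Either way, $Sz(\Sigma_{\tee^\star})>\omega^\alpha$.

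The iff now follows. For ``if'' ($\beta<\omega_1$), Example~\ref{blossexist} supplies a countable blossomed tree $\tee$ with $\rho(\tee)=\omega^\beta+1$, satisfying both $\rho(\tee)\geq\omega^\beta$ and $\rho(\tee)<\omega^{\beta+1}$; the moreover then produces $\Sigma_{\tee^\star}$ as a universal element of $\complement\szlenkop_\beta$. The ``only if'' direction ($\beta\geq\omega_1$) is the step I expect to demand the most care; my plan is a squeeze argument. Suppose $\Upsilon$ is universal for $\complement\szlenkop_\beta$. On one side, $Id_{\ell_1}$ is non-Asplund (so $Sz(Id_{\ell_1})=\infty$, hence $Id_{\ell_1}\in\complement\szlenkop_\beta$) with separable range; since $\sepop$ is an operator ideal, factoring of $\Upsilon$ through $Id_{\ell_1}$ forces $\Upsilon\in\sepop$. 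On the other, distinctness of the ideals $\szlenkop_\gamma$ (from \cite{Brookerc}) supplies an Asplund $T\in\complement\szlenkop_\beta$, through which $\Upsilon$ must also factor, placing $\Upsilon\in\asplundop$ by the ideal property of $\asplundop$. Proposition~\ref{sepsepseppy} then collapses these constraints: $\Upsilon\in\sepop\cap\asplundop=\sepop\cap\szlenkop_{\omega_1}$, so $Sz(\Upsilon)\leq\omega^{\omega_1}\leq\omega^\beta$, contradicting $\Upsilon\in\complement\szlenkop_\beta$.
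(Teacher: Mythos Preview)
Your proof is correct and follows essentially the same architecture as the paper's. Two details differ and are worth a brief comparison.

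\textbf{First assertion.} The paper chooses $\epsilon'$ with $s_{\epsilon'}^{\omega^\alpha}(T^\ast B_{Y^\ast})\neq\emptyset$ directly from $Sz(T)\nleq\omega^\alpha$, then invokes Lemma~\ref{obstaclecentre} (the doubling lemma) with $N$ large enough that $\rho(\tee)\leq\omega^\alpha 2^N+1$ to obtain $s_\epsilon^{\rho(\tee)-1}(T^\ast B_{Y^\ast})\neq\emptyset$ for $\epsilon=2^{-N-1}\epsilon'$. You instead appeal to the power-of-$\omega$ form of Szlenk indices to jump straight to $Sz(T)\geq\omega^{\alpha+1}$ (or $=\infty$) and extract $\epsilon$ from the supremum $\sup_{\epsilon>0}Sz(T,\epsilon)$; this works because each $Sz(T,\epsilon)$ is a successor while $\omega^{\alpha+1}$ is a limit, so the supremum is not attained at any single $\epsilon$ and hence every ordinal below $\omega^{\alpha+1}$---in particular $\rho(\tee)-1$---is exceeded by some $Sz(T,\epsilon)$. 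Both routes land at the same hypothesis of Corollary~\ref{birthdaybun}. The paper then applies Proposition~\ref{factorkar} directly to $\tee^\star$ (restricting the families obtained over $\tee$), whereas you apply it to $\tee$ and pass to $\tee^\star$ via Proposition~\ref{subtreefac}; these are interchangeable.

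\textbf{``Only if'' direction.} To place a would-be universal $\Upsilon$ in $\asplundop$, the paper exhibits a concrete Asplund witness in $\complement\szlenkop_\beta$, namely $Id_{C(\omega^{\omega^\beta}+1)}$ with $Sz(C(\omega^{\omega^\beta}+1))=\omega^{\beta+1}$ from \cite{Brooker2013}. You instead cite the distinctness of the ideals $\szlenkop_\gamma$ from \cite{Brookerc} to produce some Asplund $T\in\szlenkop_{\beta+1}\setminus\szlenkop_\beta$. Either witness suffices; the paper's choice is explicit, yours is more abstract but avoids an external Szlenk-index computation. The concluding squeeze via Proposition~\ref{sepsepseppy} is identical.
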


In the statement of \cref{countuniv} $\szlenkop_\alpha$ denotes the class consisting of all operators whose Szlenk index is an ordinal no larger than $\omega^\alpha$ (c.f. \vref{szideal}), whilst the operator $\Sigma_{\tee^\star}$ is the path-sum operator of $\tee^\star$ defined in \cref{jeefberky}, where $\tee^\star$ is the rooted tree $\tee$ minus its (unique) minimal element. 

Stronger universality results are achieved in \cref{kneesbees} for operators having separable codomain, the main result in this direction being the following theorem.

\begin{theorem}\label{sepseekay}
Let $X$ and $Y$ be Banach spaces, $\alpha<\omega_1$, $T\in \allop(X,Y)\setminus \szlenkop_\alpha(X,Y)$ and $(\tee,\preceq)$ a countably infinite, rooted, well-founded tree with $\rho(\tee)<\omega^{\alpha+1}$. If $Y$ is separable then $\mathring{\sigma}_{\tee}$ factors through $T$. Moreover if $\tee$ is blossomed and $\rho(\tee)\geq\omega^\alpha$ then $\mathring{\sigma}_{\tee}$ is universal for the class of non-$\alpha$-Szlenk operators having separable codomain.
\end{theorem}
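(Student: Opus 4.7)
The plan is to run the proof of Theorem~\ref{countuniv} with an extra continuity constraint, so that the resulting factorisation of $T$ lands in $C_0(\tee)$ rather than merely $\ell_\infty(\tee^\star)$; the additional ingredient is the separability of $Y$. First, by replacing $Y$ with $\overline{T(X)}$, which preserves both separability of the codomain and $Sz(T)$, I would assume that $T$ has dense range. Then $T^\ast\colon Y^\ast\to X^\ast$ is injective and $T^\ast|_{B_{Y^\ast}}\colon B_{Y^\ast}\to K:=T^\ast(B_{Y^\ast})$ is a weak$^\ast$-continuous bijection between compact Hausdorff spaces, hence a weak$^\ast$-homeomorphism. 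In particular $K$ and its scalings are weak$^\ast$-metrizable, and I fix a weak$^\ast$-compatible metric $d$ on $(1+\nu)K$ for some small $\nu\in(0,1)$.

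Choose $\epsilon'>0$ with $s_{\epsilon'}^{\rho(\tee)-1}(K)\neq\emptyset$, which is possible because $T\notin\szlenkop_\alpha$ and $\rho(\tee)<\omega^{\alpha+1}$ via Lemma~\ref{obstaclecentre}. Fix an enumeration $\tau\colon\omega\to\tee$ compatible with $\preceq$, so $\tau(0)=t_\emptyset$, the root of $\tee$. I would then rerun the inductive construction of Theorem~\ref{oopsblah} underlying Corollary~\ref{birthdaybun}, modified so that at each inductive step $k+1$ the weak$^\ast$-open neighbourhood $\yoo_2$ of $f_{\tau(k+1)^-}^\ast$ is first intersected with the weak$^\ast$-open set $\{z\in(1+\nu)K:d(z,f_{\tau(k+1)^-}^\ast)<2^{-k-1}\}$ before extracting $u^\ast$. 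Since this intersection is still a weak$^\ast$-neighbourhood of $f_{\tau(k+1)^-}^\ast$, the Szlenk-derivation step of the original argument still delivers a suitable $u^\ast$; the subsequent norm correction from $u^\ast$ to $f_{\tau(k+1)}^\ast$ is bounded by a constant multiple of $2^{-k-1}$, which transfers to a $d$-bound. Setting $x_t^\ast:=(1+\nu)^{-1}f_t^\ast\in K$ (with $x_{t_\emptyset}^\ast:=0$) then produces families $(x_t)_{t\in\tee^\star}\subseteq S_X$ and $(x_t^\ast)_{t\in\tee}\subseteq K$ satisfying both the biorthogonality~(\ref{piorpogger}) and a summable bound $d(x_{\tau(k+1)}^\ast,x_{\tau(k+1)^-}^\ast)\leq c\cdot 2^{-k-1}$.

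The main obstacle is upgrading this summable $d$-bound into weak$^\ast$-coarse-wedge continuity of $t\mapsto x_t^\ast$. Well-foundedness of $\tee$ forces $ht_\tee(t)<\omega$ for every $t$, so Proposition~\ref{coarsefacts} supplies local bases of the form $\dubyoo_\tee(t,\eff)$ at every node. Given $t=\tau(j)$ and $\varepsilon>0$, pick $M\geq j$ with $c\cdot 2^{-M}<\varepsilon$ and let $\eff$ be the (finite) set of children of $t$ having $\tau$-index at most $M$; for any $s\in\dubyoo_\tee(t,\eff)\setminus\{t\}$ the consistency of $\tau$ with $\preceq$ forces every intermediate node on the chain from $t$ to $s$ to have $\tau$-index exceeding $M$, so telescoping the $d$-bounds along this chain yields $d(x_s^\ast,x_t^\ast)\leq\sum_{k>M}c\cdot 2^{-k}<\varepsilon$. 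Pulling back through the weak$^\ast$-homeomorphism $T^\ast|_{B_{Y^\ast}}$ produces $v_t^\ast\in B_{Y^\ast}$ with $T^\ast v_t^\ast=x_t^\ast$, $v_{t_\emptyset}^\ast=0$, and $t\mapsto v_t^\ast$ weak$^\ast$-coarse-wedge continuous on $\tee$.

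Setting $Vy:=(\langle v_t^\ast,y\rangle)_{t\in\tee}$ then defines a bounded linear map $V\colon Y\to C_0(\tee)$ and $Ue_t:=x_t/\langle x_t^\ast,x_t\rangle$ a bounded $U\colon\ell_1(\tee^\star)\to X$, and the biorthogonality gives $VTUe_t=\chi_{\tee[t\preceq]}=\mathring{\sigma}_\tee e_t$, hence $VTU=\mathring{\sigma}_\tee$. For the second assertion, $C_0(\tee)$ is separable since $\tee$ is countable, and the extension of Proposition~\ref{lbszlenk} noted after Definition~\ref{trootedree} gives $Sz(\mathring{\sigma}_\tee)=Sz(\Sigma_{\tee^\star})\geq\rho(\tee)>\omega^\alpha$ whenever $\tee$ is blossomed with $\rho(\tee)\geq\omega^\alpha$ (using that $\rho(\tee)$ is always a successor ordinal). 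Hence $\mathring{\sigma}_\tee$ itself belongs to the class of non-$\alpha$-Szlenk operators having separable codomain, and universality follows from the first assertion.
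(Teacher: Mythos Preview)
Your approach is essentially correct and takes a genuinely different route from the paper's. The paper first applies Corollary~\ref{birthdaybun} on an auxiliary \emph{blossomed} tree $\arr$ with $\rho(\arr)=\rho(\tee)$, then proves a separate lemma (Lemma~\ref{seasonover}, by transfinite induction on rank) which, using a weak$^\ast$-compatible metric on $B_{Y^\ast}$, extracts a full subtree $\ess\subseteq\arr$ on which preimages $y_s^\ast\in B_{Y^\ast}$ can be chosen coarse-wedge-to-weak$^\ast$ continuously; finally $\tee$ is order-embedded into $\ess$ via Proposition~\ref{blossrels}(i), and Proposition~\ref{kaktorfar} gives the factorisation. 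You instead build the continuity directly into the inductive construction underlying Theorem~\ref{oopsblah}, shrinking each weak$^\ast$-neighbourhood at step $k+1$ by a $d$-ball of radius $2^{-k-1}$ and using the summable bound together with your telescoping argument on $\dubyoo_\tee(t,\eff)$. This is more direct: it works on $\tee$ itself, avoids the auxiliary blossomed tree, and dispenses with the transfinite subtree-extraction lemma entirely.

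One point needs tightening. The assertion that ``the subsequent norm correction from $u^\ast$ to $f_{\tau(k+1)}^\ast$ \ldots transfers to a $d$-bound'' is not automatic for an arbitrary weak$^\ast$-compatible metric on $(1+\nu)K$. You should specify that $d$ is chosen with $d(a,b)\leq\Vert a-b\Vert$; this is always possible since $(1+\nu)K$ is weak$^\ast$-metrizable, so one may take $d(a,b)=\sum_n 2^{-n}\vert\langle a-b,z_n\rangle\vert$ for a countable family $\{z_n\}\subseteq B_X$ whose associated weak topology agrees with the weak$^\ast$ topology on $(1+\nu)K$. With this choice, the correction $f_{\tau(k+1)}^\ast-u^\ast\in cK$ with $c\leq\nu 2^{-k-1}$ (cf.\ (\ref{kimmingswap})) has $d$-size at most $\nu 2^{-k-1}\sup_{y^\ast\in K}\Vert y^\ast\Vert$, and combined with $d(u^\ast,f_{\tau(k+1)^-}^\ast)<2^{-k-1}$ this gives the claimed summable bound. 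The rest of your argument---the telescoping continuity estimate, the pullback to $B_{Y^\ast}$ via the weak$^\ast$-homeomorphism $T^\ast\vert_{B_{Y^\ast}}$, and the verification that $VTU=\mathring{\sigma}_\tee$---is correct as written.
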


The operator $\mathring{\sigma}_{\tee}$ featured in the statement of \cref{sepseekay} is the path-sum operator defined at \vref{trootedree}, which is essentially the same as $\Sigma_\tee$ but with a restricted (in particular, norm separable) codomain. We conclude \cref{kneesbees} by noting that stronger versions of the classical universality results of Lindenstrauss and {Pe{\l}czy{\'n}ski} (\vref{LPthm}) and Johnson (\vref{Jthm}) hold for operators whose codomain has weak$^\ast$-sequentially compact dual ball.

In Section \cref{nonspear} we investigate whether the techniques developed in \cref{mainlemmasection,universalsection} can be used to show the existence of universal operators for classes of Asplund operators having an uncountable strict lower bound for the Szlenk index. Though we do not completely answer this question, we show that the techniques developed in the earlier sections of the paper cannot decide the existence of universal operators in this setting in ZFC.

\subsection{Notation and terminology}\label{notation}
We now outline the notation and terminology used in the current paper. We work with Banach spaces over the scalar field $\field=\real$ or $\complex$. Typical Banach spaces are denoted by the letters $W$, $X$, $Y$ and $Z$, with the identity operator of $X$ denoted $Id_X$. Following the terminology introduced earlier in the current paper, for Banach spaces $W$, $X$ and $Z$ and an operator $S:W\longrightarrow Z$, we say that $S$ \emph{factors through} $X$ if $S$ factors through $Id_X$. We write $X^\ast$ for the dual space of $X$ and denote by $\imath_X$ the canonical embedding of $X$ into $X^{\ast\ast}$. We define $B_X:=\{ x\in X \mid\Vert x\Vert\leq 1\}$ and $B_X^\circ:= \{ x\in X \mid\Vert x\Vert< 1\}$. By a \textit{subspace} of a Banach space $X$ we mean a linear subspace of $X$ that is closed in the norm topology. For a Banach space $X$, $C\subseteq X$ and $D\subseteq X^\ast$ we define $C^\perp:= \{x^\ast \in X^\ast\mid \forall x\in C, \, x^\ast (x) =0 \}$ and $D_\perp = \{ x\in X\mid \forall x^\ast\in D, \, x^\ast (x) =0 \}$. We denote by $[C]$ the norm closed linear hull of $C$ in $X$, with a typical variation on this notation being that for an indexed set $\{ x_i\mid i\in I \}\subseteq X$ we may write $[x_i]_{i\in I}$ or $[x_i\mid i\in I]$ in place of $[\{x_i\mid i\in I\}]$. With this notation we have that $\overline{[D]}^{w^\ast}$ is the weak$^\ast$ closed linear hull of $D$ in $X^\ast$. We shall make use of the well-known fact that, for a Banach space $X$ and a sequence $(x_m^\ast)_{m=1}^\infty\subseteq X^\ast$, the quotient map $Q: X\longrightarrow X/\bigcap_{m=1}^\infty\ker(x_m^\ast)$ has the property that $Q^\ast$ is an isometric weak$^\ast$-isomorphism of $(X/\bigcap_{m=1}^\infty\ker(x_m^\ast))^\ast$ onto $\overline{[x_m^\ast\mid m\in\nat]}^{w^\ast}$. 

Operator ideals are denoted by script letters such as $\opideal$. Operator ideals of particular interest in the current paper are:
\begin{itemize}
\item $\compactop$, the \textit{compact} operators;
\item $\wcompactop$, the \textit{weakly compact} operators;
\item $\sepop$, the operators having separable range;
\item $\sepop^\ast$, the operators whose adjoint has separable range;
\item $\ssop$, the strictly singular operators;
\item $\asplundop$, the \textit{Asplund} operators (also known as the \textit{decomposing} operators); and,
\item $\szlenkop_\alpha$, the $\alpha$-Szlenk operators for a given ordinal $\alpha$.
\end{itemize}
All of the operator ideals in the list above are closed, and most of them are well known. An operator $T:X\longrightarrow Y$ is \emph{strictly singular} if for every infinite dimensional subspace $Z\subseteq X$ the restriction $T|_Z$ fails to be an isomorphic embedding of $Z$ into $Y$ (that is, if for every such $Z$ and every $\epsilon>0$ there exists $z\in Z$ such that $\Vert z\Vert =1$ and $\Vert Tz\Vert <\epsilon$). For a given ordinal $\alpha$, the class $\szlenkop_\alpha$ consists of all operators whose Szlenk index is an ordinal not exceeding $\omega^\alpha$. These classes have been studied in some detail by the current author\autocite{Brooker2012} and we note in particular that important relationships between the operator ideals $\szlenkop_\alpha$ and other ideals in the list above shall be given in \cref{szlenksection}. It is well known\autocite[See e.g.][Proposition~4.4.8]{Pietsch1980} that $\sepop^\ast$ is a subclass of $\sepop$.

For infinite dimensional Banach spaces $X$ and $Z$ we say that $X$ is \emph{$Z$-saturated} if every infinite dimensional subspace of $X$ contains a further subspace that is isomorphic to $Z$. It is well known\autocite[See e.g.][Chapter~2]{Albiac2016} that for each $1\leq p<\infty$ the Banach space $\ell_p$ is $\ell_p$-saturated and that for distinct $p,q\in [1,\infty)$ neither of the spaces $\ell_p$ and $\ell_q$ is isomorphic to a subspace of the other. From this is follows readily that if $X$ is $\ell_p$-saturated and $Y$ is $\ell_q$-saturated for distinct $p,q\in [1,\infty)$ then every operator from $X$ to $Y$ is strictly singular (we shall use this fact later in the proof of \cref{otherclasses}).

By $\ord$ we denote the class of all ordinals, so that by $\alpha\in\ord$ we mean that $\alpha$ is an ordinal. We write $cof(\alpha)$ for the cofinality of the ordinal $\alpha$. If $\alpha$ is a successor ordinal, we write $\alpha-1$ to mean the unique ordinal whose successor is $\alpha$. 

For a set $S$ and a subset $R\subseteq S$ we write $\chi^S_R$ for the indicator function of $R$ in $S$, or simply $\chi_R$ if no confusion can result. When discussing a Banach space $\ell_1(S)$ for some set $S$, for $s\in S$ we typically denote by $e_s$ the element of $\ell_1(S)$ satisfying $e_s(s')=1$ if $s'=s$ and $e_s(s') = 0$ if $s'\neq s$ ($s'\in S$).We thus denote by $(e_n)_{n=1}^\infty$ the standard unit vector basis of $\ell_1=\ell_1(\nat)$. Where confusion may otherwise result, we may write $e_s^S$ in place of $e_s$ to specify the space $\ell_1(S)$ to which $e_s$ belongs.

We shall repeatedly use the fact that for a set $I$, Banach space $X$ and family $\{ x_i\mid i\in I\}\subseteq X$ with $\sup_{i\in I}\Vert x_i\Vert <\infty$, there exists a unique element of $\allop(\ell_1(I), X)$ satisfying $e_i\mapsto x_i$, $i\in I$.

For a Banach space $X$, a subset $A\subseteq X$, and $\epsilon>0$, we say that $A$ is \textit{$\epsilon$-separated} if $\Vert x-y\Vert>\epsilon$ for any distinct $x,y\in A$. For $B\subseteq C\subseteq X$ and $\delta>0$ we say that $B$ is a \textit{$\delta$-net in $C$} if for every $w\in C$ there exists $z\in B$ such that $\Vert w-z\Vert \leq \delta$.

\section{The Szlenk index, trees and operators on Banach spaces over trees}\label{backgrounder}

\subsection{The Szlenk index}\label{szlenksection}
Let $X$ be a Banach space. For each $\epsilon>0$ define a derivation $s_\epsilon$ on weak$^\ast$-compact subsets of $X^\ast$ as follows: for weak$^\ast$-compact $K\subseteq X^\ast$ let
\[
s_\epsilon(K):=\{ x^\ast \in K\mid \diam(\yoo\cap K)>\epsilon \mbox{ for every weak}^\ast\mbox{-open }\yoo\ni x^\ast\}\,.
\]
Iterate $s_\epsilon$ transfinitely by setting $s_\epsilon^0(K)=K$, $s_\epsilon^{\xi+1}(K) = s_\epsilon(s_\epsilon^\xi(K))$ for every ordinal $\xi$, and $s_\epsilon^\xi(K) =\bigcap_{\zeta<\xi}s_\epsilon^\zeta(K)$ whenever $\xi$ is a limit ordinal. The \textit{$\epsilon$-Szlenk index of $K$}, denoted $Sz(K,\epsilon)$, is defined as the smallest ordinal $\xi$ such that $s_\epsilon^\xi(K)=\emptyset$, if such an ordinal exists; if no such ordinal exists then $Sz(K,\epsilon)$ is undefined. (Note that, by weak$^\ast$-compactness, $Sz(K,\epsilon)$ is a successor ordinal when it exists.) Notationally, we write $Sz(K,\epsilon)<\infty$ to mean that $Sz(K,\epsilon)$ is defined, and $Sz(K,\epsilon)=\infty$ to mean that $Sz(K,\epsilon)$ is undefined. If $Sz(K,\epsilon)$ is defined for all $\epsilon>0$ then the \textit{Szlenk index of $K$}, denoted $Sz(K)$, is the ordinal $\sup_{\epsilon>0}Sz(K,\epsilon)$. If $Sz(K,\epsilon)$ is undefined for some $\epsilon>0$, then $Sz(K)$ is undefined; we write $Sz(K)<\infty$ to mean that $Sz(K)$ is defined, and $Sz(K)=\infty$ to mean that $Sz(K)$ is undefined. Note that while $Sz(K,\epsilon)\leq \xi$ means that $Sz(K,\epsilon)$ is defined and equal to an ordinal not exceeding $\xi$, the statement $Sz(K,\epsilon)\nleq \xi$ means either that $Sz(K,\epsilon)$ is undefined or that $Sz(K,\epsilon)$ is defined and exceeds $\xi$; similarly, $Sz(K)\nleq\xi$ means either that $Sz(K)$ is undefined or that $Sz(K)$ is defined and equal to an ordinal exceeding $\xi$.

Define the \textit{$\epsilon$-Szlenk index of $X$} and the \textit{Szlenk index of $X$} to be the indices $Sz(X,\epsilon):=Sz(B_{X^\ast},\epsilon)$ and $Sz(X):=Sz(B_{X^\ast})$, respectively. If $Y$ is a Banach space and $T:X\longrightarrow Y$ an operator, define the \textit{$\epsilon$-Szlenk index of $T$} and the \textit{Szlenk index of $T$} to be the indices $Sz(T,\epsilon):=Sz(T^\ast (B_{X^\ast}),\epsilon)$ and $Sz(T):=Sz(T^\ast (B_{X^\ast}))$, respectively.

A useful survey of the Szlenk index and its applications in Banach space theory has been written by \textcite{Lancien2006}. For facts regarding Szlenk indices of operators we refer the reader to the work of \textcite{Brooker2012}. The following proposition collects some well-known facts concerning Szlenk indices of Banach spaces and operators.

\begin{proposition}
Let $X$ be a Banach space.
\begin{itemize}
\item[(i)] For $K_1\subseteq K_2\subseteq X^\ast$, $\epsilon_1\geq\epsilon_2>0$ and ordinals $\xi_1\geq\xi_2$ we have $s_{\epsilon_1}^{\xi_1}(K_1)\subseteq s_{\epsilon_2}^{\xi_2}(K_2)$.
\item[(ii)] For a subspace $Z\subseteq X$ we have $Sz(Z)\leq Sz(X)$ and $Sz(X/Z)\leq Sz(X)$.
\item[(iii)] The following are equivalent:
\begin{itemize}
\item[(a)] $Sz(X)<\infty$ (that is, the Szlenk index is defined).
\item[(b)] $X$ is an Asplund space.
\item[(c)] $X^\ast$ has the Radon-Nikod{\'y}m property.
\item[(d)] Every separable subspace of $X$ has separable dual.
\end{itemize}
\end{itemize}
\end{proposition}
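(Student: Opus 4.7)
The plan for part (i) is a routine double induction following from two monotonicity observations on the one-step derivation. If $K_1\subseteq K_2$ and $\mathcal{U}$ is weak$^\ast$-open then $\mathcal{U}\cap K_1\subseteq \mathcal{U}\cap K_2$, giving $\diam(\mathcal{U}\cap K_1)\leq\diam(\mathcal{U}\cap K_2)$ and hence $s_\epsilon(K_1)\subseteq s_\epsilon(K_2)$; and if $\epsilon_1\geq\epsilon_2$ the defining condition is stronger at $\epsilon_1$, so $s_{\epsilon_1}(K)\subseteq s_{\epsilon_2}(K)$. Combined, $s_{\epsilon_1}(K_1)\subseteq s_{\epsilon_2}(K_2)$. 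The iterated version follows by transfinite induction on $\xi_1\geq\xi_2$, with limit stages handled by intersections, and the $\xi_2=0$ base case of nested iterates $s_{\epsilon_2}^{\xi_2}(K_2)\subseteq K_2$ obtained by a separate easy induction.

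For (ii), I would treat the quotient case first. The isometric identification $(X/Z)^\ast\cong Z^\perp\subseteq X^\ast$ also identifies the weak$^\ast$ topologies, since both coincide with pointwise convergence on $X$ (functionals on $X/Z$ correspond precisely to elements of $Z^\perp$); so $B_{(X/Z)^\ast}$ is identified with $Z^\perp\cap B_{X^\ast}\subseteq B_{X^\ast}$ together with its Szlenk derivations, and part (i) delivers $Sz(X/Z)\leq Sz(X)$. The subspace case is more delicate. Consider the restriction map $R:X^\ast\to Z^\ast$, which is weak$^\ast$-to-weak$^\ast$ continuous, norm-non-expansive, and, by Hahn-Banach, surjects $B_{X^\ast}$ onto $B_{Z^\ast}$. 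I would prove by transfinite induction on $\xi$ the pull-back estimate
\[
s_{2\epsilon}^\xi(R(K))\subseteq R(s_\epsilon^\xi(K))
\]
for all weak$^\ast$-compact $K\subseteq B_{X^\ast}$ and all $\epsilon>0$. The successor stage is the heart of the matter: given $y^\ast\in s_{2\epsilon}(R(K))$, suppose for contradiction that the fiber $R^{-1}(y^\ast)\cap K$ is disjoint from $s_\epsilon(K)$; cover it by finitely many weak$^\ast$-open $U_{x_i^\ast}$ each with $\diam(U_{x_i^\ast}\cap K)\leq\epsilon$, so that $U_{x_i^\ast}\cap K\subseteq \overline{B}(x_i^\ast,\epsilon)$ in norm. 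Non-expansion of $R$ forces $R(U_{x_i^\ast}\cap K)\subseteq \overline{B}(y^\ast,\epsilon)$; and weak$^\ast$-compactness of $K\setminus\bigcup_i U_{x_i^\ast}$ together with continuity of $R$ yields a weak$^\ast$-open $W\ni y^\ast$ with $R^{-1}(W)\cap K\subseteq\bigcup_i U_{x_i^\ast}$. Hence $W\cap R(K)\subseteq\overline{B}(y^\ast,\epsilon)$ has diameter at most $2\epsilon$, contradicting the hypothesis. Taking the supremum over $\epsilon>0$ absorbs the factor of $2$ and delivers $Sz(Z)\leq Sz(X)$.

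Part (iii) is a synthesis of classical results. The equivalence (b)$\Leftrightarrow$(c) is the Namioka-Phelps-Stegall characterisation of Asplund spaces via the Radon-Nikod\'ym property of the dual, and (b)$\Leftrightarrow$(d) is the separable determination of Asplundness; both are covered in standard references. The link with (a) comes from Szlenk's original theorem in \cite{Szlenk1968}: for a separable Banach space $Y$, $Sz(Y)<\infty$ if and only if $Y^\ast$ is separable. Given this, (a)$\Rightarrow$(d) follows by applying part (ii) to any separable subspace $Z\subseteq X$, for then $Sz(Z)\leq Sz(X)<\infty$ and hence $Z^\ast$ is separable. The converse (d)$\Rightarrow$(a) proceeds by contraposition: if $Sz(X,\epsilon)=\infty$ for some $\epsilon>0$, a transfinite construction picks countably many elements of $X$ at each stage of the derivation that witness the weak$^\ast$-diameter condition, and the closed linear span is a separable $Z\subseteq X$ with $Sz(Z,\epsilon)=\infty$, forcing $Z^\ast$ to be non-separable.

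The principal technical obstacle is the one-step fiber-cover step in part (ii): a direct weak$^\ast$-compactness reduction covers the fiber by open sets whose individual intersections $U_{x_i^\ast}\cap K$ with $K$ have small diameter but whose union generally does not. Extracting a weak$^\ast$-neighbourhood of $y^\ast$ in $R(K)$ of controlled norm-diameter relies essentially on non-expansion of $R$ together with the fact that every point of the fiber shares the common image $y^\ast$, and it is this which forces the rescaling factor of $2$ in the derivation estimate.
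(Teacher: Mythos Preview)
The paper does not prove this proposition: it is stated as a collection of ``well-known facts'' and no argument is given. So there is no paper proof to compare against, and your task is simply to check whether your argument stands on its own.

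Parts (i) and (ii) are correct. The fiber-cover argument for the subspace case in (ii) is the standard one and you have the key points right: the fiber $R^{-1}(y^\ast)\cap K$ is weak$^\ast$-compact, the finite cover by small-diameter neighbourhoods exists by compactness, the complement $K\setminus\bigcup_i U_{x_i^\ast}$ is weak$^\ast$-compact with image missing $y^\ast$, and non-expansion of $R$ together with $R(x_i^\ast)=y^\ast$ forces $R(U_{x_i^\ast}\cap K)\subseteq \overline{B}(y^\ast,\epsilon)$. The factor $2$ is exactly right and washes out under the supremum.

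Your sketch of (d)$\Rightarrow$(a) in part (iii) is the one place that is a little loose. Saying ``pick countably many elements of $X$ at each stage of the derivation'' suggests picking witnesses along a transfinite chain of derivations, but if $Sz(X,\epsilon)=\infty$ there are uncountably many stages before stabilisation, so that literal reading does not yield a separable subspace. The clean direct argument is: the derivation stabilises at a nonempty weak$^\ast$-compact $L$ with $s_\epsilon(L)=L$; run a Cantor-scheme on $L$ to build a dyadic family $(x_s^\ast)_{s\in 2^{<\omega}}\subseteq L$ together with countably many norm witnesses $(y_s)_{s\in 2^{<\omega}}\subseteq B_X$; set $Z=[y_s:s\in 2^{<\omega}]$ and check that the branch-limits restrict to an uncountable $\epsilon/2$-separated family in $B_{Z^\ast}$. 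Alternatively, and perhaps more in keeping with the rest of your write-up, you can bypass this entirely: $s_\epsilon(L)=L\neq\emptyset$ shows every weak$^\ast$-slice of $L$ has diameter $>\epsilon$, so $B_{X^\ast}$ fails weak$^\ast$-dentability, whence $X^\ast$ fails the RNP, giving (not a)$\Rightarrow$(not c), and you already have (c)$\Leftrightarrow$(d) from the classical references.
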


An argument due to A. Sersouri\autocite[][proof of Lemma~6]{Sersouri1989} shows that if the Szlenk index of a Banach space or an operator is defined, then it is of the form $\omega^\alpha$ for some ordinal $\alpha$ (and more recently Causey\autocite{Causey2017} has described all the possible ordinals $\alpha$ for which $\omega^\alpha$ may be realised as the Szlenk index of a Banach space); this observation leads to the following definition.

\begin{definition}\label{szideal}
For $\alpha$ an ordinal define the class
\[
\szlenkop_\alpha:=\{ T\in\allop\mid Sz(T)\leq\omega^\alpha\}\,.
\]
If $T\in\szlenkop_\alpha$, we say that $T$ is \textit{$\alpha$-Szlenk}.
\end{definition}

It is a result of \textcite{Brooker2012} that the classes $\szlenkop_\alpha$ are distinct for different values of $\alpha$ and that each such class is a closed operator ideal. Moreover, $\szlenkop_0$ coincides with the class $\compactop$ of compact operators, whilst the class $\bigcup_{\alpha\in\ord}\szlenkop_\alpha$ of all operators whose Szlenk index is defined coincides with the class $\asplundop$ of Asplund operators. For operators with separable range, the following result of Brooker\autocite[][Proposition~2.11]{Brooker2012} provides information regarding the relationship between the classes $\sepop^\ast$, $\asplundop$ and $\szlenkop_\alpha$ for $\alpha\in\ord$.

\begin{proposition}\label{sepsepseppy} The following chain of equalities holds:
\begin{equation*} \displaystyle \sepop^\ast = \sepop \cap \asplundop =  \sepop\cap\bigcup_{\alpha \in\ord} \szlenkop_\alpha = \sepop \cap \bigcup_{\alpha < \omega_1}\szlenkop_\alpha = \sepop \cap \szlenkop_{\omega_1}.\end{equation*}
\end{proposition}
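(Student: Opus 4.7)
The chain splits into four equalities. The middle one, $\sepop \cap \asplundop = \sepop \cap \bigcup_\alpha \szlenkop_\alpha$, is immediate from the already-stated identity $\asplundop = \bigcup_{\alpha \in \ord} \szlenkop_\alpha$. For the first equality $\sepop^\ast = \sepop \cap \asplundop$, the inclusion $\sepop^\ast \subseteq \sepop$ has been cited, and $\sepop^\ast \subseteq \asplundop$ is a one-line computation: for $T \in \sepop^\ast$ and separable $Z \subseteq X$ with inclusion $i: Z \hookrightarrow X$, the set $(T|_Z)^\ast(Y^\ast) = i^\ast(T^\ast(Y^\ast))$ is the continuous linear image of the separable set $T^\ast(Y^\ast)$ and hence is itself separable.

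The heart of the proof is the reverse inclusion $\sepop \cap \asplundop \subseteq \sepop^\ast$ together with the countability encoded in the final two equalities. Given $T \in \sepop \cap \asplundop$ with $T: X \to Y$, set $Y_0 := \overline{T(X)}$, let $j: Y_0 \hookrightarrow Y$ denote the inclusion and $T_0: X \to Y_0$ the corestriction of $T$; since $j^\ast: Y^\ast \to Y_0^\ast$ is surjective, $T^\ast(Y^\ast) = T_0^\ast(Y_0^\ast)$, so it suffices to prove that $T_0^\ast(Y_0^\ast)$ is separable. As $T_0$ has dense range in the separable space $Y_0$, its adjoint $T_0^\ast$ is a weak$^\ast$-to-weak$^\ast$ continuous injection, hence a weak$^\ast$ homeomorphism from the weak$^\ast$-metrizable compact set $B_{Y_0^\ast}$ onto its image $K := T_0^\ast(B_{Y_0^\ast})$. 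So $K$ is weak$^\ast$-compact and weak$^\ast$-metrizable, and $Sz(K)$ is defined by the Asplund hypothesis.

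The main technical point is that in any weak$^\ast$-metrizable, weak$^\ast$-compact $K \subseteq X^\ast$ with $Sz(K) < \infty$, both $Sz(K) < \omega_1$ and norm separability of $K$ hold. Countability of each $Sz(K, \epsilon)$ is immediate, because $(s_\epsilon^\xi(K))_\xi$ is strictly decreasing while nonempty and in a second countable space a strictly decreasing transfinite sequence of closed sets has countable length; taking $\sup_n$ for $\epsilon = 1/n$ gives $Sz(K) < \omega_1$. For norm separability, I would show by transfinite induction on $\zeta \leq Sz(K, \epsilon)$ that $K \setminus s_\epsilon^\zeta(K)$ admits a countable $\epsilon$-net: at successor $\zeta = \eta+1$, the set $s_\epsilon^\eta(K) \setminus s_\epsilon^{\eta+1}(K)$ is covered by weak$^\ast$-relatively-open subsets of $s_\epsilon^\eta(K)$ of norm-diameter at most $\epsilon$ (by definition of $s_\epsilon$), and second countability of the weak$^\ast$ topology on $K$ yields a countable subcover; at limits one takes countable unions. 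Running this construction with $\epsilon = 1/n$ produces a countable norm-dense subset of $K$, so $T^\ast(Y^\ast) = \bigcup_n nK$ is norm separable, proving $T \in \sepop^\ast$.

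The final two equalities then fall out. The inclusion $\sepop \cap \bigcup_{\alpha < \omega_1}\szlenkop_\alpha \subseteq \sepop \cap \szlenkop_{\omega_1}$ is immediate from $\omega^\alpha < \omega^{\omega_1}$ for $\alpha < \omega_1$, and for $T \in \sepop \cap \szlenkop_{\omega_1} \subseteq \sepop \cap \asplundop$ the paragraph above gives $Sz(T) < \omega_1$, whence Lancien's observation that $Sz(T)$ has the form $\omega^\alpha$ forces $T \in \szlenkop_\alpha$ for some $\alpha < \omega_1$. The principal obstacle is the weak$^\ast$-metrizability reduction combined with the transfinite cover construction; the remaining pieces are standard adjoint calculus and ordinal arithmetic.
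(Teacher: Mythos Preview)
Your proof is correct. Note, however, that the paper itself does not prove this proposition: it merely quotes the result from \cite{Brookerc} and offers no argument at all. Hence there is no ``paper's own proof'' to compare against; you have supplied a self-contained proof where the paper gives only a citation.

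Your argument is the standard one and is essentially what appears in \cite{Brookerc}: the key reduction is to corestrict $T$ to the separable space $Y_0=\overline{T(X)}$, so that $K=T^\ast(B_{Y^\ast})=T_0^\ast(B_{Y_0^\ast})$ is weak$^\ast$-compact and weak$^\ast$-metrizable, and then exploit second countability of the weak$^\ast$ topology on $K$ twice --- once to bound each $Sz(K,\epsilon)$ below $\omega_1$ via the Cantor--Bendixson argument, and once to build countable $\epsilon$-nets by transfinite induction through the Szlenk derivations. The only minor comment is that the identity $\omega^{\omega_1}=\omega_1$ (needed to see $\szlenkop_{\omega_1}\subseteq\asplundop$ in your last paragraph) is being used implicitly; it holds because $\omega_1$ is an $\epsilon$-number, but it is worth making explicit.
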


\subsection{Trees}\label{treesubsection}
A \textit{tree} is a partially ordered set $(\tee,\preceq)$ for which the set $\{ s\in\tee\mid s\preceq t\}$ is well-ordered for every $t\in\tee$. We shall frequently suppress the partial order $\preceq$ and refer to the underlying set $\tee$ as the tree. An element of a tree is called a \textit{node}. For $\ess\subseteq\tee$ we denote by $\MIN(\ess)$ (resp., $\MAX(\ess)$) the set of all minimal (resp., maximal) elements of $\ess$. A \textit{subtree} of $\tee$ is a subset of $\tee$ equipped with the partial order induced by the partial order of $\tee$, which we also denote $\preceq$. A \textit{chain} in $\tee$ is a totally ordered subset of $\tee$. A \textit{branch} of $\tee$ is a maximal (with respect to set inclusion) totally ordered subset of $\tee$. We say that $\tee$ is \textit{well-founded} if it contains no infinite branches, and \textit{chain-complete} if every chain $\cee$ in $\tee$ admits a unique least upper bound. Clearly, every well-founded tree is chain-complete. A subset $\ess\subseteq\tee$ is said to be \textit{downwards closed in} $\tee$ if $\ess=\bigcup_{t\in\ess}\{ s\in\tee\mid s\preceq t\}$. Following \textcite{Todorcevic1984}, a \textit{path} in $\tee$ is a downwards closed, totally ordered subset of $\tee$. An \textit{interval} in $\tee$ is a subset of $\tee$ of the form $(t',t'']$, $[t',t'']$, $[t',t'')$ or $(t',t'')$, where, for $t',t''\in\tee$, $(t',t'']:=\{ t\in\tee\mid t'\prec t\preceq t''\}$ and the other types of intervals are defined analogously. (For a tree $(\tee,\preceq)$ and $s,t\in\tee$ we write $s\prec t$ to mean that $s\preceq t$ and $s\neq t$.) For $t\in\tee$ we define the following sets:
\begin{align*}
\tee[\preceq t]&= \{ s\in\tee\mid s\preceq t\}\\
\tee[\prec t]&= \{ s\in\tee\mid s\prec t\}\\
\tee[t \preceq]&= \{ s\in\tee\mid t\preceq s\}\\
\tee[t\prec]&= \{ s\in\tee\mid t\prec s\}\\
\tee[t+]&= \MIN(\tee[t\prec])
\end{align*}
By $t^-$ we denote the maximal element of $\tee[\prec t]$, if it exists (that is, if the order type of $\tee[\prec t]$ is a successor). If $s,t\in\tee$ are such that $s\npreceq t$ and $t\npreceq s$, then we write $s\perp t$. Following \textcite{Godefroy2001}, a subtree $\ess$ of $\tee$ is said to be a \textit{full} subtree of $\tee$ if it is downwards closed, $\vert \ess\cap \MIN(\tee) \vert=\vert \MIN(\tee)\vert$, and for every $t\in\ess$ we have $\vert \ess[t+]\vert = \vert \tee[t+]\vert$. A tree is said to be \textit{rooted} if $\vert \MIN(T)\vert \leq 1$. In particular, a nonempty tree is rooted if and only if it admits a unique minimal element, which we call the \textit{root} of $\tee$. We denote by $\tee^\star$ the subtree $\tee\setminus\MIN(\tee)$ of $\tee$. For $t\in\tee$ the \textit{height of $t$}, denoted $ht_\tee(t)$, is the order type of $\tee[\prec t]$. The \textit{height} of $\tee$ is the ordinal $ht(\tee)=\sup\{ ht_\tee(t)+1\mid t\in\tee\}$. Note that $ht(\tee)\leq\omega$ if and only if $\tee[\prec t]$ is finite for every $t\in\tee$.

Let $\tee= (\tee,\preceq)$ be a tree, $\alpha$ an ordinal and $\psi:\alpha\longrightarrow\tee$ a surjection. Then $\psi$ induces a well-ordering of $\tee$ that extends $\preceq$. Indeed, define $A_0 = \tee[\preceq \psi(0)]$ and, if $\beta>0$ is an ordinal such that $A_\gamma$ has been defined for all $\gamma<\beta$, define $A_\beta = \tee[\preceq \psi(\beta)]\setminus \bigcup_{\gamma<\beta}\tee[\preceq \psi(\gamma)]$. The induced well-order $\leq$ of $\tee$ is defined by declaring $s\leq t$, where $s\in A_\beta$ and $t\in A_{\beta'}$, if $\beta<\beta'$ or if $\beta=\beta'$ and $s\preceq t$. Note that if $\tee$ is countable and $ht(\tee)\leq\omega$ then the well-ordering of $\tee$ induced as above by a surjection of $\omega$ onto $\tee$ is of order type $\omega$. In fact, the following statements are equivalent for an infinite tree $\tee$:
\begin{itemize}
\item[(i$_0$)] $\tee$ is countable and $\tee[\prec t]$ is finite for every $t\in\tee$;
\item[(ii$_0$)] $\tee$ is countable and $ht(\tee)\leq\omega$;
\item[(iii$_0$)] There exists a bijection $\tau$ of $\omega$ onto $\tee$ such that $\tau(l)\preceq \tau(m)$ implies $l\leq m$ for $l,m<\omega$.
\end{itemize}

\begin{example}\label{fullcounter}
Let $\Omega:=\bigcup_{n<\omega}\prod_n\omega$. That is, $\Omega$ is the set of all finite (including possibly empty) sequences of finite ordinals. We define an order $\sqsubseteq$ on $\Omega$ by saying that $s\sqsubseteq t$ if and only if $s$ is an initial segment of $t$. Note that $\Omega$ is a rooted tree, with its root being the empty sequence $\emptyset$. For $n<\omega$ and $t\in\Omega$ we denote by $n^\smallfrown t$ the concatenation of $(n)$ with $t$; that is, $n^\smallfrown t=(n)$ if $t=\emptyset$ and $n^\smallfrown t=(n,n_1,\ldots,n_k)$ if $t=(n_1,\ldots,n_k)$. It is straightforward to show that for an arbitrary tree $(\tee,\preceq)$ the following statements are equivalent to statements (i$_0$)-(iii$_0$) above and to each other:
\begin{itemize}
\item[(iv$_0$)] $\tee$ is order-isomorphic to a subtree of $\Omega$;
\item[(v$_0$)] $\tee$ is order-isomorphic to a downwards-closed subtree of $\Omega^\star$.
\end{itemize}
Moreover, if $\tee$ is rooted then (i$_0$)-(iv$_0$) are equivalent to:
\begin{itemize}
\item[(vi$_0$)] $\tee$ is order-isomorphic to a downwards-closed subtree of $\Omega$.
\end{itemize}
\end{example}

We now describe a method for inductively defining a decreasing (with respect to set inclusion) family of downwards closed subtrees of a given tree, indexed by the ordinals. To this end for a tree $(\tee,\preceq)$ let \begin{align*}\tee^{(0)}&=\tee;\\ \tee^{(\xi+1)}&=\tee^{(\xi)}\setminus \MAX(\tee^{(\xi)})\quad \mbox{for every ordinal }\xi;\mbox{ and,}\\ \tee^{(\xi)}&=\bigcap_{\xi'<\xi}\tee^{(\xi')}\quad\mbox{ if }\xi\mbox{ is a limit ordinal.}\end{align*}The fact that $\tee^{(\xi)}$ is downwards closed in $\tee$ for all ordinals $\xi$ follows from a straightforward transfinite induction. 

The \textit{rank} of a node $t\in\tee$ is defined to be the unique ordinal $\rho_\tee(t)$ such that $t\in \tee^{(\rho_T(t))}\setminus \tee^{(\rho_T(t)+1)}$, if it exists. Notice that if $s,t\in\tee$ are such that $t\prec s$ and $\rho_\tee(t)$ exists, then $\rho_\tee(s)$ exists and satisfies $\rho_\tee(s)< \rho_\tee(t)$ since the derived trees $\tee^{(\xi)}$ are downwards closed. It follows that if $t\in\tee$ is such that $\rho_\tee(s)$ exists for all $s\in\tee[t+]$, then $\rho_\tee(t)$ exists and satisfies 
\begin{equation}\label{rankeqn}
\rho_\tee(t)=\sup\{\rho_\tee(s)+1\mid s\in\tee[t+]\}\,.
\end{equation} 
Thus, if $t_0\in\tee$ is such that $\rho_\tee(t_0)$ does not exist, there exists $t_1\in \tee[t_0+]$ such that $\rho_\tee(t_1)$ does not exist; similarly, there exists $t_2\in \tee[t_1+]$ such that $\rho_\tee(t_2)$ does not exist, and in this way we inductively define an infinite chain $(t_n)_{n<\omega}$ in $\tee$, hence $\tee$ is not well-founded. Conversely, if $\tee$ contains an infinite branch, $\bee$ say, then, with $s_n$ denoting the element of $\bee$ of height $n$ in $\tee$, we have by induction that $\{s_n\mid n<\omega\}\subseteq\tee^{(\xi)}$ for all ordinals $\xi$, hence $\rho_\tee(s_n)$ is undefined for all $n$. We deduce that $\rho_\tee(t)$ exists for all $t\in\tee$ if and only if $\tee$ is well-founded, if and only if $\tee^{(\xi)}=\emptyset$ for some ordinal $\xi$. If $\tee$ is well-founded, the \textit{rank of} $\tee$ is the ordinal
\[
\rho(\tee):= \min\{ \xi\mid \tee^{(\xi)}=\emptyset\} =\sup\{ \rho_\tee(t)+1\mid t\in \tee\}.
\] Notice that if $\tee$ is rooted, with root $t_0$, say, then $\tee^{(\rho_\tee(t_0))}=\{ t_0\}$, hence $\rho(\tee)= \rho_\tee(t_0)+1$ is a successor ordinal.

We now give the definition of a \textit{blossomed} tree, due to \textcite{Gasparis2005}.

\begin{definition}\label{gaspardefn}
We say that a countable tree $(\tee,\preceq)$ is \textit{blossomed} if it is rooted, well-founded, and for each $t\in\tee\setminus\MAX(\tee)$ there exists a bijection $\psi:\omega \longrightarrow \tee[t+]$ such that $m\leq n<\omega $ implies $\rho_\tee(\psi(m))\leq \rho_\tee(\psi(n))$.
\end{definition}

Blossomed trees were used by Gasparis\autocite{Gasparis2005} to study fixing properties of operators of large Szlenk index acting on $C(K)$ spaces. The important property of a blossomed tree $\tee$ in studying the Szlenk index is that for every $t\in\tee\setminus\MAX(\tee)$ and cofinite subset $\queue\subseteq \tee[t+]$ we have 
\begin{equation}\label{jupiter}
\sup\{ \rho_\tee(t')\mid t'\in \queue \} = \sup\{\rho_\tee(t'')\mid t''\in \tee[t+] \};
\end{equation}
this condition clearly holds for any blossomed tree and, moreover, any countable, rooted, well-founded tree $\tee$ satisfying the property stated at \cref{jupiter} admits a subtree $\ess$ such that $\rho(\ess)=\rho(\tee)$ and $\ess$ is blossomed. Thus, blossomed trees can be thought of as the `minimal' trees satisfying these conditions and, moreover, the formally stronger definition of a blossomed tree is typically more convenient to work with than the property stated at \cref{jupiter} for the purposes of proving results concerning the Szlenk index, such as \cref{lbszlenk}.

The following example guarantees a rich supply of blossomed trees. Note that other examples of blossomed trees, namely the Schreier families of finite subsets of $\nat$, are used in the context of Szlenk indices in \textcite{Gasparis2005}. The construction of trees in \cref{blossexist} below is essentially the same as that given by Bourgain on p.91 of \textcite{Bourgain1979}.
\begin{example}\label{blossexist}
We construct, via transfinite induction on $\xi<\omega_1$, a family $(\tee_\xi)_{\xi<\omega_1}$ consisting of blossomed subtrees of $\Omega$ that satisfy $\rho(\tee_\xi)=\xi+1$ for each $\xi<\omega_1$. Set $\tee_0=\{ \emptyset \}$. Suppose $\xi>0$ is an ordinal such that the $\tee_\zeta$ has been defined for all $\zeta<\xi$; we define $\tee_\xi$ as follows. Let $(\xi_n)_{n=0}^\infty$ be a non-decreasing, cofinal sequence in $\xi$, and set \[ \tee_\xi = \{\emptyset\}\cup \{ n^\smallfrown t\mid n<\omega,\,t\in \tee_{\xi_n}\}\,. \] 
A straightforward transfinite induction on $\zeta\leq\xi$ shows that
\begin{equation}\label{auxilarious}
\forall\,\zeta\leq\xi\quad\tee_\xi^{(\zeta)}= \{ \emptyset\}\cup \bigcup_{n<\omega}\{n^\smallfrown t\mid t\in \tee_{\xi_n}^{(\zeta)} \}\,,
\end{equation}
hence
\begin{equation}\label{vauxilarious}
\forall\,\zeta<\xi\quad \MAX(\tee_\xi^{(\zeta)})= \bigcup_{n<\omega}\{n^\smallfrown t\mid t\in \MAX(\tee_{\xi_n}^{(\zeta)}) \}\,.
\end{equation}
Taking $\zeta=\xi$ in \cref{auxilarious} yields $\tee_\xi^{(\xi)}=\{\emptyset\}$, hence $\rho_{\tee_\xi}(\emptyset)=\xi$ and $\rho(\tee_\xi)=\xi+1$. 

For $n<\omega$ let $\imath_n :\tee_{\xi_n}\longrightarrow \tee_\xi$ be the map $t\mapsto n^\smallfrown t$. From \cref{vauxilarious} we have $\rho_{\tee_\xi}(n^\smallfrown t)=\rho_{\tee_{\xi_n}}(t)$ every $n<\omega$ and $t\in\tee_{\xi_n}$. Thus, if for $n<\omega$ and $t\in\tee_{\xi_n}\setminus \MAX(\tee_{\xi_n})$ the map $\psi:\omega\longrightarrow \tee_{\xi_n}[t+]$ is a bijection such that $(\rho_{\tee_{\xi_n}}(\psi(m)))_{m=0}^\infty$ is non-decreasing (as per \cref{blossexist}), then $\imath_n\circ\psi:\omega\longrightarrow \tee_\xi[(n^\smallfrown t)+]$ is a bijection with \[(\rho_{\tee_\xi}(\imath_n\circ \psi(m)))_{m=0}^\infty= (\rho_{\tee_\xi}(n^\smallfrown \psi(m)))_{m=0}^\infty= (\rho_{\tee_{\xi_n}}(\psi(m)))_{m=0}^\infty\] non-decreasing. Similarly, $n\mapsto (n)$ is a bijection of $\omega$ onto $\tee_\xi[\emptyset+]$ and \[ (\rho_{\tee_\xi}((n)))_{n=0}^\infty= (\rho_{\tee_{\xi_n}}(\emptyset))_{n=0}^\infty= (\xi_n)_{n=0}^\infty\] is non-decreasing. We have now shown that $\tee_\xi$ is blossomed, as required.
\end{example}

The following proposition collects properties of blossomed trees that we shall need in subsequent sections of the current paper. 
\begin{proposition}\label{blossrels}
Let $(\ess,\preceq')$ and $(\tee,\preceq)$ be countable, rooted, well-founded trees.
\begin{itemize} \item[(i)] If $\ess$ is blossomed and $\rho(\tee)\leq\rho(\ess)$ then $\tee$ is order isomorphic to a downwards closed subtree of $\ess$.
\item[(ii)] If $\ess$ is blossomed and $\ess'$ is a full subtree of $\ess$ then $\ess'$ is blossomed and $\rho(\ess')=\rho(\ess)$.
\end{itemize}
\end{proposition}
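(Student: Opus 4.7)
For part (i), my plan is to construct the desired embedding $\varphi: \tee \longrightarrow \ess$ by recursion along $\tee$ from the root, maintaining throughout the invariant $\rho_\ess(\varphi(t)) \geq \rho_\tee(t)$. I begin by mapping the root $t_0$ of $\tee$ to the root $s_0$ of $\ess$, which satisfies the invariant since $\rho_\tee(t_0) = \rho(\tee) - 1 \leq \rho(\ess) - 1 = \rho_\ess(s_0)$. At the inductive step, given $\varphi(t)$, I enumerate $\tee[t+]$ as $(t'_m)_m$ and choose distinct children $\varphi(t'_m) \in \ess[\varphi(t)+]$ greedily, one at a time, each satisfying $\rho_\ess(\varphi(t'_m)) \geq \rho_\tee(t'_m)$. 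This is possible because the blossomed enumeration of $\ess[\varphi(t)+]$ has non-decreasing ranks that are eventually $\geq \beta$ for every $\beta < \rho_\ess(\varphi(t))$; since $\rho_\tee(t'_m) < \rho_\tee(t) \leq \rho_\ess(\varphi(t))$, after excluding the finitely many children already used at step $m$, infinitely many admissible candidates remain. The resulting $\varphi$ is order-preserving and injective by construction, and its image is downwards closed in $\ess$ because, by construction, the $\ess$-parent of $\varphi(t)$ equals $\varphi(t^-)$ for every non-root $t$, whence $\ess[\preceq \varphi(t)] = \varphi(\tee[\preceq t])$ by induction on height.

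For part (ii), I first prove the identity $\rho_{\ess'}(t) = \rho_\ess(t)$ for every $t \in \ess'$ by transfinite induction on $\rho_\ess(t)$. The base case $\rho_\ess(t) = 0$ is immediate: fullness forces $|\ess'[t+]| = |\ess[t+]| = 0$, so $t$ is a leaf of $\ess'$ as well. For the inductive step, the inequality $\rho_{\ess'}(t) \leq \rho_\ess(t)$ is clear from $\ess'[t+] \subseteq \ess[t+]$ and the inductive hypothesis on children. For the reverse inequality I use the following consequence of the blossomed enumeration $\psi:\omega \longrightarrow \ess[t+]$: for every $\beta < \rho_\ess(t) = \sup\{\rho_\ess(\psi(m)) + 1 : m<\omega\}$, the non-decreasing sequence $(\rho_\ess(\psi(m)))_m$ eventually lies $\geq \beta$, so $\{s \in \ess[t+] : \rho_\ess(s) \geq \beta\}$ is cofinite in $\ess[t+]$. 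Since $\ess'[t+]$ is a countably infinite subset of $\ess[t+]$, it intersects this cofinite set in infinitely many points, so by the inductive hypothesis $\rho_{\ess'}(t) \geq \beta + 1$ for every $\beta < \rho_\ess(t)$, giving $\rho_{\ess'}(t) \geq \rho_\ess(t)$.

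Once rank preservation is established, $\rho(\ess') = \rho_{\ess'}(t_0) + 1 = \rho_\ess(t_0) + 1 = \rho(\ess)$ at the common root, and the blossomedness of $\ess'$ follows readily: rootedness, well-foundedness and countability are inherited from $\ess$, and for each non-maximal $t \in \ess'$ the required non-decreasing enumeration of $\ess'[t+]$ is obtained by restricting the blossomed enumeration of $\ess[t+]$ to $\ess'[t+]$ and reindexing by $\omega$, the ranks computed in $\ess'$ agreeing with those in $\ess$ on children by what has just been shown.

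The main obstacle I anticipate lies in the reverse inequality of part (ii): the definition of fullness controls only the cardinality of child sets, not which specific children are retained, so a priori $\ess'$ could discard most children of high rank and fail to witness the supremum at $t$. What rescues the argument is precisely the blossomed hypothesis on $\ess$, which forces the high-rank children at each node to form a cofinite set, so any countably infinite sub-collection must meet them in infinitely many points and still attain the full supremum of ranks. This cofiniteness principle is the bridge converting a purely cardinal-theoretic hypothesis into rank preservation, and the proof reduces to isolating and applying it at each inductive step.
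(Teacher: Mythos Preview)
Your proof is correct. The paper itself does not give a proof of this proposition, citing instead Lemmas~2.7 and~2.8 of \cite{Gasparis2005}; your argument---a rank-controlled greedy recursion on height for (i), and rank preservation via the cofiniteness of high-rank children afforded by the blossomed enumeration for (ii)---is the natural one and coincides in spirit with the cited reference.
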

Assertion (i) of \cref{blossrels} is a trivial generalisation of Lemma~2.7 of \textcite{Gasparis2005}, requiring little change in the proof. Assertion (ii) is Lemma~2.8 of \textcite{Gasparis2005}. We refer the reader to \textcite{Gasparis2005} for the proofs.

There are various natural topologies for trees, many of which are described in \textcite{Nyikos1997}. The tree topology which will be of interest to us in \cref{kneesbees} is the \textit{coarse wedge topology}, which is compact and Hausdorff for many trees. The coarse wedge topology of $(\tee,\preceq)$ is that topology on $\tee$ formed by taking as a subbase all sets of the form $\tee[t\preceq]$ and $\tee\setminus \tee[t\preceq]$, where the order type of $\tee[\prec t]$ is either $0$ or a successor ordinal. For a tree $(\tee,\preceq)$, $t\in \tee$ and $\eff\subseteq \tee$, define \[ \dubyoo_\tee(t,\eff):= \tee[t\preceq]\setminus \bigcup_{s\in \eff}\tee[s\preceq] \,.\] The following proposition is clear.
\begin{proposition}\label{coarsefacts}
Let $(\tee,\preceq)$ be a tree and let $t\in\tee$ be such that the order type of $\tee[\prec t]$ is $0$ or a successor ordinal. Then the coarse wedge topology of $\tee$ admits a local base of clopen sets at $t$ consisting of all sets of the form $\dubyoo_\tee(t,\eff)$, where $\eff\subseteq \tee[t+]$ is finite.
\end{proposition}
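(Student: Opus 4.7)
The plan is to verify the two clauses of the proposition separately: first, that each $\dubyoo_\tee(t,\eff)$ with $\eff\subseteq \tee[t+]$ finite is a clopen neighbourhood of $t$; second, that every subbasic open set containing $t$ engulfs such a $\dubyoo_\tee(t,\eff)$, from which the local-base property follows by taking finite intersections.

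For the first clause, I would observe that $t\in\tee[t\preceq]$ while $t\notin \tee[s\preceq]$ for any $s\in \tee[t+]$ (since such $s$ satisfies $t\prec s$, hence $t\not\succeq s$), so $t\in\dubyoo_\tee(t,\eff)$. To see that $\dubyoo_\tee(t,\eff)$ is clopen, it suffices to check that both $\tee[t\preceq]$ and each $\tee[s\preceq]$ with $s\in\eff$ are subbasic, i.e.\ that $\tee[\prec t]$ and each $\tee[\prec s]$ have order type $0$ or a successor. For $\tee[\prec t]$ this is exactly the hypothesis on $t$. For $s\in \tee[t+]$, the order type of $\tee[\prec s]$ is the order type of $\tee[\preceq t]$, which is $1$ (if $\tee[\prec t]=\emptyset$) or $\alpha+2$ (if $\tee[\prec t]$ has order type $\alpha+1$)—either way a successor. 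Thus $\dubyoo_\tee(t,\eff)$ is a finite intersection of subbasic clopen sets.

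For the second clause, let $\yoo$ be a basic open neighbourhood of $t$, say
\[
\yoo=\bigcap_{i\in I}\tee[s_i\preceq]\;\cap\;\bigcap_{j\in J}\bigl(\tee\setminus\tee[r_j\preceq]\bigr)
\]
with $I,J$ finite. For each $i\in I$, $t\in\tee[s_i\preceq]$ means $s_i\preceq t$, so $\dubyoo_\tee(t,\eff)\subseteq\tee[t\preceq]\subseteq\tee[s_i\preceq]$ automatically. For each $j\in J$, $t\notin\tee[r_j\preceq]$ means either $t\perp r_j$ or $t\prec r_j$. In the incomparable case, $\tee[t\preceq]\cap\tee[r_j\preceq]=\emptyset$ by the tree axiom, so no action is needed. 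In the case $t\prec r_j$, the set $\{u\in\tee[\preceq r_j]:t\prec u\}$ is a non-empty subset of the well-ordered set $\tee[\preceq r_j]$ and hence has a minimum $s_j$; a minimality argument shows that no node lies strictly between $t$ and $s_j$, so $s_j\in\tee[t+]$ and $s_j\preceq r_j$. Taking $\eff$ to be the finite set of all such $s_j$ (as $j$ ranges over $J$ with $t\prec r_j$) gives $\tee[r_j\preceq]\subseteq\tee[s_j\preceq]$, whence $\dubyoo_\tee(t,\eff)\cap\tee[r_j\preceq]=\emptyset$ for every $j\in J$, and consequently $\dubyoo_\tee(t,\eff)\subseteq\yoo$.

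There is no genuine obstacle here; the only point requiring a small argument is the existence of the immediate successor $s_j$ of $t$ lying below $r_j$, which hinges on the well-orderedness of $\tee[\preceq r_j]$. The edge case that $t$ is maximal is handled by $\eff=\emptyset$, which yields $\dubyoo_\tee(t,\emptyset)=\tee[t\preceq]=\{t\}$ (no $r_j$ with $t\prec r_j$ can occur), and this already suffices as a neighbourhood base at an isolated-looking $t$.
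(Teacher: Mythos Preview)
Your proof is correct and complete. The paper does not actually give a proof of this proposition; it simply introduces it with the sentence ``The following proposition is clear'' and states it without further justification. Your argument fills in exactly the details one would expect: checking that each $\dubyoo_\tee(t,\eff)$ is clopen via the subbasic sets, and then showing that an arbitrary basic neighbourhood of $t$ contains some $\dubyoo_\tee(t,\eff)$ by analysing the three possible relationships ($s_i\preceq t$, $t\perp r_j$, $t\prec r_j$) between $t$ and the subbasic parameters.
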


The following result is proved in the aforementioned paper of Nyikos\autocite[][Corollary~3.5]{Nyikos1997}.
\begin{theorem}\label{coarsecompact}
Let $\tee$ be a tree. The following are equivalent.
\begin{itemize}
\item[(i)] $\tee$ is chain-complete and $\MIN(\tee)$ is finite.
\item[(ii)] The coarse wedge topology of $\tee$ is compact and Hausdorff.
\end{itemize}
\end{theorem}

We conclude the current subsection on trees with the following proposition.

\begin{proposition}\label{closedown}
Let $(\tee,\preceq)$ be a tree with $ht(\tee)\leq\omega$.
\begin{itemize} 
\item[(i)] Let $\ess\subseteq\tee$ be a downwards closed subset of $\tee$. Then $\ess$ is closed in the coarse-wedge topology of $\tee$.
\item[(ii)] Let $(\ess,\preceq')$ be a tree and suppose $\phi:\ess\longrightarrow\tee$ is an order-isomorphism of $\ess$ onto a downwards closed subset of $\tee$. Then $\phi$ is coarse wedge continuous.
\end{itemize}
\end{proposition}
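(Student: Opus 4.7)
The key preliminary observation I would isolate is that, when $ht(\tee)\leq\omega$, every $t\in\tee$ has $\tee[\prec t]$ of finite order type, hence either $0$ or a successor ordinal. Consequently, for every $t\in\tee$ both $\tee[t\preceq]$ and $\tee\setminus\tee[t\preceq]$ lie in the defining subbase of the coarse wedge topology, so each wedge $\tee[t\preceq]$ is clopen. With this in hand, the whole proposition reduces to routine manipulations with downward closedness and the order-isomorphism $\phi$.

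Assertion (i) is then essentially immediate: to show $\ess$ is closed I would show its complement is open. For any $t\in\tee\setminus\ess$, the contrapositive of downward closedness of $\ess$ gives $\tee[t\preceq]\subseteq\tee\setminus\ess$, exhibiting a subbasic open neighborhood of $t$ contained in $\tee\setminus\ess$. Since $t$ was arbitrary, $\tee\setminus\ess$ is open.

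For (ii), I would first verify that $\ess$ inherits the property $ht(\ess)\leq\omega$. Because $\phi(\ess)$ is downwards closed in $\tee$, for each $s\in\ess$ the set $\tee[\prec\phi(s)]$ lies entirely in $\phi(\ess)$, so $\phi$ restricts to an order-isomorphism of $\ess[\prec' s]$ onto $\tee[\prec\phi(s)]$; hence $ht_\ess(s)=ht_\tee(\phi(s))<\omega$ and the preliminary observation applies equally to $\ess$. To verify continuity it suffices to check that preimages of subbasic sets are open. Fix $t\in\tee$. If $t\notin\phi(\ess)$, then no $\phi(s)$ can satisfy $t\preceq\phi(s)$ (else downward closedness of $\phi(\ess)$ would force $t\in\phi(\ess)$), so $\phi^{-1}(\tee[t\preceq])=\emptyset$. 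If instead $t=\phi(r)$ for some $r\in\ess$, then the order-isomorphism property gives $\phi^{-1}(\tee[t\preceq])=\ess[r\preceq']$, which is clopen in $\ess$ by the preliminary observation. In either case the preimage is clopen, so the preimage of the complementary subbasic set $\tee\setminus\tee[t\preceq]$ is automatically open as well.

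There is no real obstacle here; the one point meriting care is that continuity requires preimages of \emph{both} halves of the subbase to be open, which is what forces us to pass through clopenness of the wedges rather than mere openness. This in turn is the reason the hypothesis $ht(\tee)\leq\omega$ is used in both parts: it guarantees that every node's wedge qualifies for inclusion in the subbase under both signs.
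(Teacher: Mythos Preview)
Your proof is correct and follows essentially the same route as the paper's: both arguments exploit that under $ht(\tee)\leq\omega$ every wedge $\tee[t\preceq]$ is clopen, show the complement of $\ess$ is open for (i), and for (ii) reduce continuity to checking that $\phi^{-1}(\tee[t\preceq])$ is either $\emptyset$ or $\ess[r\preceq']$ according to whether $t$ lies in $\phi(\ess)$. If anything you are slightly more careful than the paper, which asserts that $\ess[s\preceq']$ is clopen in $\ess$ without pausing to note (as you do) that $ht(\ess)\leq\omega$ is inherited from the downward closedness of $\phi(\ess)$.
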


\begin{proof}
We first prove (i). Suppose $t\in\tee\setminus\ess$. Then $\tee[t\preceq]$ is open in the coarse wedge topology of $\tee$ since $ht_\tee(t)<\omega$. Moreover $\ess\cap\tee[t\preceq]=\emptyset$ since $t\notin\ess$ and $\ess$ is downwards closed.

To prove (ii), first note that the sets $\tee[t\preceq]$ and $\tee\setminus\tee[t\preceq]$, where $t$ varies over all of $\tee$, form a subbasis of clopen sets for the coarse wedge topology of $\tee$. To establish the continuity of $\phi$ it therefore suffices to show that $\phi^{-1}(\tee[t\preceq])$ is clopen in $\ess$ for every $t\in\tee$. To this end suppose $t\in\tee$. If $t\notin\phi(\ess)$ then $\phi^{-1}(\tee[t\preceq])=\emptyset$ since $\phi(\ess)$ is downwards closed in $\tee$. On the other hand if $t\in\phi(\ess)$, say $t=\phi(s)$, then $\phi^{-1}(\tee[t\preceq])=\ess[s\preceq']$ since $\phi$ is an order isomorphism.
\end{proof}

\subsection{Operators on Banach spaces over trees}\label{jeefberky}

Let $(\tee,\preceq)$ be a tree. Define $\Sigma_\tee\colon\ell_1(\tee)\longrightarrow \ell_\infty(\tee)$ by setting
\[
\Sigma_\tee w = \Big(\sum_{s\preceq t}w(s)\Big)_{t\in\tee}\,,\qquad w\in \ell_1(\tee).
\]
Equivalently, $\Sigma_\tee$ is the unique element of $\allop(\ell_1(\tee),\ell_\infty(\tee))$ satisfying $\Sigma_\tee e_t = \chi_{\tee[t\preceq]}$ for each $t\in\tee$, with $\Vert \Sigma_\tee\Vert=1$ for nonempty $\tee$.

Notice that we can state some existing universality results in terms of operators of the form $\Sigma_\tee$. For instance, taking $\tee$ to be the set of natural numbers $\nat$ equipped with its usual order $\leq$, the operator $\Sigma_\tee$ is the aforementioned universal non-weakly compact operator of Lindenstrauss and Pe{\l}czy{\'n}ski (\vref{LPthm}). Moreover, taking $\tee$ to instead be the set of natural numbers $\nat$ equipped with the trivial order $=$ yields $\Sigma_\tee$ as the formal identity operator from $\ell_1$ to $\ell_\infty$, shown by Johnson to be universal for the class of non-compact operators (\vref{Jthm}). Amongst the outcomes of the current paper is that we add to the collection of trees $(\tee,\preceq)$ for which the corresponding operator $\Sigma_\tee$ is universal for the complement of some operator ideal.

We shall use the following proposition to determine whether $\Sigma_\tee$ factors through $T$, for certain trees $(\tee,\preceq)$ and operators $T$.

\begin{proposition}\label{factorkar}
Let $(\tee,\preceq)$ be a tree, $X$ and $Y$ Banach spaces and $T\in\allop(X,Y)$. The following are equivalent:
\begin{itemize}
\item[(i)] $\Sigma_\tee$ factors through $T$.
\item[(ii)] There exist $\delta>0$ and families $(x_t)_{t\in\tee}\subseteq B_X$ and $(x_t^\ast)_{t\in\tee}\subseteq T^\ast B_{Y^\ast}$ such that
\begin{equation}\label{deltadelta}
\langle x_t^\ast,x_s\rangle = \begin{cases}
\langle x_s^\ast,x_s\rangle\geq\delta&\mbox{if }s\preceq t\\
0&\mbox{if }s\npreceq t
\end{cases},
\quad\quad s,t\in\tee.
\end{equation}
\end{itemize}
\end{proposition}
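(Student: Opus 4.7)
The plan is to prove both directions by direct construction, using the unit vector basis $(e_t)_{t\in\tee}$ of $\ell_1(\tee)$ and the coordinate (evaluation) functionals $(\varepsilon_t^\ast)_{t\in\tee}\subseteq\ell_\infty(\tee)^\ast$ as the natural witnesses of a factorisation of $\Sigma_\tee$. Writing $\Sigma_\tee$ in coordinates gives $\langle\varepsilon_t^\ast,\Sigma_\tee e_s\rangle=\chi_{\tee[s\preceq]}(t)$, which is precisely the shape of (\ref{deltadelta}) once an appropriate rescaling is absorbed, so both implications amount to passing between these two descriptions.

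For (ii)$\Rightarrow$(i), set $\alpha_s:=\langle x_s^\ast,x_s\rangle\geq\delta$ and define $U\in\allop(\ell_1(\tee),X)$ by prescribing $Ue_s=\alpha_s^{-1}x_s$; since $\|\alpha_s^{-1}x_s\|\leq \delta^{-1}$, this extends uniquely to a bounded operator of norm at most $\delta^{-1}$ by the fact recalled in the introduction. Next, using the hypothesis $x_t^\ast\in T^\ast B_{Y^\ast}$, choose for each $t\in\tee$ some $z_t^\ast\in B_{Y^\ast}$ with $T^\ast z_t^\ast=x_t^\ast$, and define $V\in\allop(Y,\ell_\infty(\tee))$ by $(Vy)(t):=\langle z_t^\ast,y\rangle$, which is bounded of norm at most $1$ since $\|z_t^\ast\|\leq 1$ for all $t$. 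Then for all $s,t\in\tee$,
\[
(VTUe_s)(t)=\langle z_t^\ast,T(\alpha_s^{-1}x_s)\rangle=\alpha_s^{-1}\langle x_t^\ast,x_s\rangle=\chi_{\tee[s\preceq]}(t)
\]
by (\ref{deltadelta}), so $VTU$ and $\Sigma_\tee$ agree on the unit vectors spanning a dense subspace of $\ell_1(\tee)$, hence coincide.

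For (i)$\Rightarrow$(ii), suppose $U\in\allop(\ell_1(\tee),X)$ and $V\in\allop(Y,\ell_\infty(\tee))$ satisfy $VTU=\Sigma_\tee$. We may assume both $U$ and $V$ are nonzero, else $\Sigma_\tee=0$, which occurs precisely when $\tee=\emptyset$ and the assertion is vacuous. Set $x_s:=\|U\|^{-1}Ue_s$, so $x_s\in B_X$. For each $t\in\tee$ let $\varepsilon_t^\ast\in\ell_\infty(\tee)^\ast$ be the evaluation functional at $t$, with $\|\varepsilon_t^\ast\|=1$, and set $x_t^\ast:=T^\ast(\|V\|^{-1}V^\ast\varepsilon_t^\ast)\in T^\ast B_{Y^\ast}$. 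Then
\[
\langle x_t^\ast,x_s\rangle=(\|U\|\|V\|)^{-1}\langle\varepsilon_t^\ast,VTUe_s\rangle=(\|U\|\|V\|)^{-1}\chi_{\tee[s\preceq]}(t),
\]
so (\ref{deltadelta}) holds with the uniform constant $\delta:=(\|U\|\|V\|)^{-1}>0$, the common diagonal value in this case actually being independent of $s$.

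There is no genuine obstacle here; the proof is a bookkeeping exercise. The only mild subtlety worth flagging is that in (ii)$\Rightarrow$(i) the diagonal values $\langle x_s^\ast,x_s\rangle$ need not be uniform in $s$, which is why $U$ must carry the rescaling $e_s\mapsto \alpha_s^{-1}x_s$ rather than a uniform normalisation on both the $U$- and $V$-sides.
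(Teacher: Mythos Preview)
Your proof is correct and follows essentially the same approach as the paper. The only cosmetic difference is in the direction (i)$\Rightarrow$(ii): the paper normalises by the individual norms $\Vert Ue_s\Vert$ rather than uniformly by $\Vert U\Vert$, which yields diagonal values $\langle x_s^\ast,x_s\rangle=\Vert Ue_s\Vert^{-1}\Vert V\Vert^{-1}$ that may vary with $s$, whereas your choice gives the constant value $(\Vert U\Vert\Vert V\Vert)^{-1}$; both are valid and lead to the same $\delta=(\Vert U\Vert\Vert V\Vert)^{-1}$.
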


\begin{proof}
First suppose that (i) holds and that $U\in\allop(\ell_1(\tee),X)$ and $V\in\allop(Y,\ell_\infty(\tee))$ are such that $VTU=\Sigma_\tee$. For each $t\in\tee$ let $f_t^\ast$ be the element of $\ell_\infty(\tee)^\ast$ satisfying $\langle f_t^\ast, z\rangle = \Vert V\Vert^{-1}z(t)$ for every $z\in \ell_\infty(\tee)$. For each $s,t\in\tee$ let $x_s=\Vert Ue_s\Vert^{-1}Ue_s\in B_X$ and $x_t^\ast= T^\ast V^\ast f_t^\ast\in T^\ast B_{Y^\ast}$. Then for $s,t\in\tee$ we have
\begin{align*}
\langle x_t^\ast,x_s\rangle &= \langle T^\ast V^\ast f_t^\ast,\Vert Ue_s\Vert^{-1}Ue_s\rangle\\
&=\Vert Ue_s\Vert^{-1}\langle f_t^\ast,VTUe_s\rangle\\
&=\Vert (Ue_s\Vert^{-1}\langle f_t^\ast,\Sigma_\tee e_s\rangle\\
&=\begin{cases}
\Vert Ue_s\Vert^{-1}\Vert V\Vert^{-1}\geq (\Vert U\Vert\Vert V\Vert)^{-1}&\mbox{if }s\preceq t\\
0&\mbox{if }s\npreceq t
\end{cases},
\end{align*}
hence (i) implies (ii).

Now suppose that (ii) holds and let $\delta>0$, $(x_t)_{t\in\tee}\subseteq B_X$ and $(x_t^\ast)_{t\in\tee}\subseteq T^\ast B_{Y^\ast}$ be such that \cref{deltadelta} holds. Define $U\colon \ell_1(\tee)\longrightarrow X$ by setting 
\[
Uw=\sum_{t\in\tee}\frac{w(t)}{\langle x_t^\ast,x_t\rangle}x_t
\]
for each $w\in \ell_1(\tee)$. Then $U$ is well-defined, linear and continuous with $\Vert U\Vert\leq \delta^{-1}$. For each $t\in\tee$ choose $v_t^\ast\in B_{Y^\ast}$ such that $T^\ast v_t^\ast=x_t^\ast$. The map $V: Y\longrightarrow \ell_\infty(\tee)$ given by setting $Vy=(\langle v_t^\ast,y\rangle)_{t\in\tee}$ for each $y\in Y$ is well-defined, linear and continuous with $\Vert V\Vert\leq 1$. To complete the proof we show that $VTU=\Sigma_\tee$. 
To this end note that for $s\in\tee$ we have \[ VTUe_s= VT(\langle x_s^\ast, x_s\rangle^{-1}x_s )= (\langle x_s^\ast, x_s\rangle^{-1}\langle v_t^\ast ,Tx_s \rangle)_{t\in\tee} = (\langle x_s^\ast, x_s\rangle^{-1}\langle x_t^\ast ,x_s \rangle)_{t\in\tee} = \chi_{\tee[s\preceq]},\] hence $VTU=\Sigma_\tee$.
\end{proof}

The following result may be proved by an appeal to \cref{factorkar}, but an equally easy direct proof is possible (we omit the details).

\begin{proposition}\label{subtreefac}
Let $(\ess,\preceq')$ and $(\tee,\preceq)$ be trees and suppose that $\ess$ is order isomorphic to a subtree of $\tee$. Then $\Sigma_\ess$ factors through $\Sigma_\tee$.
\end{proposition}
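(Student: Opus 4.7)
The plan is to construct explicit factoring operators directly from the given order embedding, bypassing Proposition~\ref{factorkar} entirely (as the authors indicate is possible). Let $\phi\colon\ess\longrightarrow\tee$ be an order isomorphism of $\ess$ onto a subtree of $\tee$; thus for all $s,s'\in\ess$ one has $s\preceq' s'$ if and only if $\phi(s)\preceq\phi(s')$.

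First I would define $U\in\allop(\ell_1(\ess),\ell_1(\tee))$ as the unique bounded operator satisfying $U e_s^\ess = e_{\phi(s)}^\tee$ for each $s\in\ess$. Since $\phi$ is injective and the target basis is $1$-unconditional and $1$-equivalent to its image, $U$ is in fact an isometric embedding, so $\Vert U\Vert = 1$. Next, I would define $V\in\allop(\ell_\infty(\tee),\ell_\infty(\ess))$ by restriction along $\phi$, namely $(Vz)(s) = z(\phi(s))$ for $z\in\ell_\infty(\tee)$ and $s\in\ess$; clearly $V$ is linear with $\Vert V\Vert\leq 1$.

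It then remains to verify $V\Sigma_\tee U = \Sigma_\ess$, and it suffices to check equality on the basis vectors $e_s^\ess$. For each $s\in\ess$ we have
\[
\Sigma_\tee U e_s^\ess \;=\; \Sigma_\tee e_{\phi(s)}^\tee \;=\; \chi_{\tee[\phi(s)\preceq]}^\tee,
\]
and hence for each $s'\in\ess$,
\[
(V\Sigma_\tee U e_s^\ess)(s') \;=\; \chi_{\tee[\phi(s)\preceq]}^\tee(\phi(s')) \;=\; \begin{cases} 1 & \text{if } \phi(s)\preceq\phi(s'),\\ 0 & \text{otherwise.}\end{cases}
\]
Using the defining property of the order isomorphism $\phi$, the right-hand side equals $\chi_{\ess[s\preceq']}^\ess(s') = (\Sigma_\ess e_s^\ess)(s')$, completing the verification.

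There is no real obstacle here: the only point that requires care is observing that because $\phi(\ess)$ inherits its order from $\tee$, the condition $\phi(s)\preceq\phi(s')$ in $\tee$ coincides with $s\preceq' s'$ in $\ess$, which is precisely what makes the restriction operator $V$ send $\chi_{\tee[\phi(s)\preceq]}^\tee$ to $\chi_{\ess[s\preceq']}^\ess$ rather than to something larger.
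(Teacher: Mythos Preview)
Your proof is correct and follows essentially the same approach as the paper's own proof. The only cosmetic difference is that the paper factors your $U$ as a relabeling isometry $\ell_1(\ess)\to\ell_1(\phi(\ess))$ followed by the extension-by-zero inclusion $\ell_1(\phi(\ess))\to\ell_1(\tee)$, and similarly factors your $V$ as the restriction $\ell_\infty(\tee)\to\ell_\infty(\phi(\ess))$ followed by the relabeling $\ell_\infty(\phi(\ess))\to\ell_\infty(\ess)$; your version simply composes these pairs into single operators, and the verification on basis vectors is identical.
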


Notice that if $\opideal$ and $\scrjay$ are operator ideals and $T\in \scrjay\cap\complement\opideal$ is universal for $\complement\opideal$, then every universal element of $\complement\opideal$ belongs to $\scrjay$. In particular, it is a consequence of the following proposition that if $\opideal$ is an operator ideal and $\tee$ is a tree such that $\Sigma_\tee$ is universal for $\complement\opideal$, then any operator that is universal for $\complement\opideal$ must be strictly singular.

\begin{proposition}\label{otherclasses}
Let $(\tee,\preceq)$ be a tree. Then:
\begin{itemize}
\item[(i)] $\Sigma_\tee$ is strictly singular.
\item[(ii)] $\Sigma_\tee$ is weakly compact if and only if $\tee$ is well-founded.
\item[(iii)] $\Sigma_\tee$ is compact if and only if $\tee$ is finite, if and only if $\Sigma_\tee$ is finite rank.
\end{itemize}
\end{proposition}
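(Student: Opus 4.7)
My plan is to dispatch (iii) directly, derive (ii) by combining Proposition~\ref{subtreefac} and Theorem~\ref{LPthm} with a structural argument, and handle (i) by a Rosenthal-style contradiction.

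For (iii), the family $(\Sigma_\tee e_t)_{t\in\tee}=(\chi_{\tee[t\preceq]})_{t\in\tee}$ lies in $\Sigma_\tee(B_{\ell_1(\tee)})$ and is $1$-separated in $\ell_\infty(\tee)$: for distinct $t_1,t_2\in\tee$, either $t_1\perp t_2$ (disjoint cones) or one is strictly below the other (properly nested cones), so $\|\chi_{\tee[t_1\preceq]}-\chi_{\tee[t_2\preceq]}\|_\infty=1$. Hence $\tee$ infinite rules out total boundedness and finite-dimensionality of the image, and the reverse implications are trivial.

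For (ii), the forward direction is immediate: an infinite chain in $\tee$ is a subtree order-isomorphic to $(\nat,\leq)$, so by Proposition~\ref{subtreefac} the non-weakly-compact summation operator $\Sigma$ factors through $\Sigma_\tee$, forcing $\Sigma_\tee$ to be non-weakly-compact. For the converse, assume $\tee$ is well-founded, so every chain in $\tee$ is finite and $ht(\tee)\leq\omega$. The key observation is that $\Sigma_\tee w(t)$ depends on $t$ only through the finite chain $\tee[\preceq t]\cap\mathrm{supp}(w)$. Decomposing $\Sigma_\tee$ along the levels $\tee_n:=\{t:ht_\tee(t)=n\}$ produces operators $T_n:\ell_1(\tee_n)\to\ell_\infty(\tee)$, $e_s\mapsto\chi_{\tee[s\preceq]}$, each of which factors as $\ell_1(\tee_n)\hookrightarrow\ell_2(\tee_n)\hookrightarrow\ell_\infty(\tee_n)$ followed by a pullback and is thus weakly compact. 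Combining this level-wise weak compactness with a diagonal weak-$\ast$ extraction on the countable combined support of any bounded sequence in $\ell_1(\tee)$ yields weak compactness of $\Sigma_\tee$.

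For (i), I argue by contradiction. Every infinite-dimensional subspace of $\ell_1(\tee)$ contains a copy of $\ell_1$, so if $\Sigma_\tee$ were not strictly singular it would fix $\ell_1$. A gliding-hump perturbation produces a disjointly supported normalized sequence $(z_n)\subseteq\ell_1(\tee)$ with $(\Sigma_\tee z_n)$ equivalent to the $\ell_1$-basis in $\ell_\infty(\tee)$ with some constant $c>0$. For each $\lambda$ in a countable dense subset of $c_{00}$, pick $t_\lambda\in\tee$ satisfying $|\sum_n\lambda_n\Sigma_\tee z_n(t_\lambda)|\geq(c/2)\sum_n|\lambda_n|$ and set $\tee^*:=\{t_\lambda\}$; a density-plus-continuity estimate shows the restrictions $T_n:=\Sigma_\tee z_n|_{\tee^*}$ remain $\ell_1$-basis-equivalent in $\ell_\infty(\tee^*)\cong\ell_\infty$. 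I then extract a weakly Cauchy subsequence of $(T_n)$, which contradicts Rosenthal's $\ell_1$-theorem; the extraction exploits that $T_n(t)=\sum_{s\in S_n\cap\tee[\preceq t]}z_n(s)$ depends on $t$ only through a chain in the countable combined support $S:=\bigcup_n S_n$, so a diagonal pointwise extraction on this chain structure, combined with the disjoint-support geometry of $(z_n)$, produces the weakly Cauchy subsequence. The main obstacle I foresee is precisely this extraction step, since neither $\ell_1(\tee)$ nor $\ell_\infty(\tee^*)$ has an off-the-shelf Grothendieck-type transfer from pointwise or weak-$\ast$ convergence to weak convergence; the resolution in both (i) and (ii) relies on this combinatorial restriction to chains and on the level-by-level reflexive factorization that well-foundedness enables.
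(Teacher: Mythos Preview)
Your argument for (iii) and for the ``non-well-founded $\Rightarrow$ non-weakly-compact'' direction of (ii) are correct and match the paper. The gaps are in the remaining parts.

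For the converse in (ii), your level-wise decomposition $\Sigma_\tee=\sum_n T_nP_n$ with each $T_nP_n$ weakly compact does not yield weak compactness of $\Sigma_\tee$: the series converges only pointwise, not in operator norm (indeed $\|T_nP_n\|=1$ whenever $\tee_n\ne\emptyset$), and a pointwise limit of weakly compact operators need not be weakly compact---witness $Id_{c_0}=\sum_n e_n^\ast\otimes e_n$. Your proposed ``diagonal weak-$\ast$ extraction on the countable combined support'' does not rescue this: pointwise (weak-$\ast$) convergence of a subsequence of $(w_k)$ in $\ell_1$ says nothing about weak convergence of $(\Sigma_\tee w_k)$ in $\ell_\infty(\tee)$, where weak convergence is governed by the large dual $\ell_\infty(\tee)^\ast$.

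For (i), the contradiction you aim for hinges on extracting a weakly Cauchy subsequence of $(\Sigma_\tee z_n|_{\tee^\ast})$ inside $\ell_\infty(\tee^\ast)\cong\ell_\infty$. But $\ell_\infty$ contains $\ell_1$, so Rosenthal's theorem does not preclude the $\ell_1$-alternative; you would have to \emph{prove} the weakly Cauchy alternative occurs, and your justification (``depends on $t$ only through a chain in the countable combined support'') is not an argument---it is precisely the difficulty. Moreover, your final sentence invokes ``the level-by-level reflexive factorization that well-foundedness enables,'' yet (i) must hold for \emph{all} trees, well-founded or not.

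The paper handles both (i) and the hard direction of (ii) with a single device you are missing: it factors $\Sigma_\tee$ through the James tree space $J(\tee)$ via $\Sigma_\tee=B_\tee A_\tee$ with $A_\tee\colon\ell_1(\tee)\to J(\tee)$ and $B_\tee\colon J(\tee)\to\ell_\infty(\tee)$. Since $J(\tee)$ is $\ell_2$-saturated for every tree $\tee$, no copy of $\ell_1$ survives the factorization, giving (i); and $J(\tee)$ is reflexive whenever $\tee$ is well-founded (by transfinite induction on $\rho(\tee)$, using that $(\bigoplus_{t\in\MIN(\tee^\star)}J(\tee[t\preceq]))_{\ell_2}$ is codimension $1$ in $J(\tee)$), giving weak compactness in (ii).
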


Suppose $(\tee,\preceq)$ is a tree. For the purposes of proving \cref{otherclasses} we now recall the definition of the \textit{James tree space of $\tee$}, denoted $J(\tee)$, which is the completion of $c_{00}(\tee)$ with respect to the norm $\Vert \cdot \Vert_{J(\tee)}$ on $c_{00}(\tee)$ that is defined by setting
\[
\Vert x\Vert_{J(\tee)} = \sup\bigg\{ \Big( \sum_{i=1}^k\big\vert \sum_{t\in \ess_i}x(t)\big\vert^2 \Big)^{1/2}\mid   \ess_1, \ldots, \ess_k\subseteq \tee \mbox{ pairwise disjoint intervals}\bigg\}
\]
for each $x\in c_{00}(\tee)$. The formal identity map from $(c_{00}(\tee), \Vert \cdot\Vert_{\ell_1(\tee)})$ to $(c_{00}(\tee),\Vert \cdot\Vert_{J(\tee)})$ is norm continuous and therefore admits a (unique) continuous linear extension $A_\tee\in \allop(\ell_1(\tee), J_2(\tee))$. Moreover, the linear map $x\mapsto ( \sum_{s\preceq t}x(s) )_{t\in\tee}$ from $c_{00}(\tee)$ to $\ell_\infty(\tee)$ is norm continuous with with respect to the norm $\Vert \cdot \Vert_{J(\tee)}$ on $c_{00}(\tee)$, and thus extends (uniquely) to some $B_\tee\in\allop(J(\tee),\ell_\infty(\tee))$. Since $\Sigma_\tee = B_\tee A_\tee$ we have that $\Sigma_\tee$ factors through the James tree space $J(\tee)$.

\begin{remark}
For an introduction to properties of James tree spaces we refer the interested reader to \textcite{Brackebusch1988}. 
\end{remark}

\begin{proof}[Proof of \cref{otherclasses}]
Assertion (i) of the proposition follows from the fact that the domain of $\Sigma_\tee$, namely $\ell_1(\tee)$, is $\ell_1$-saturated, whilst $\Sigma_\tee$ has already been seen above to factor through the $\ell_2$-saturated space $J(\tee)$. (See Lemma~2 and the final remark of \textcite{Hagler1978} for details of the proof that $J(\tee)$ is $\ell_2$-saturated.)

For (ii), the assertion that $\Sigma_\tee$ is weakly compact whenever $\tee$ is well-founded follows from the aforementioned fact that $\Sigma_\tee$ factors through the James tree space $J(\tee)$ of $\tee$ and the fact that $J(\tee)$ is reflexive if and only if $\tee$ is well-founded. The proof of this latter fact is obtained via a straightforward transfinite induction on $\rho(\tee)$, using the following facts: an $	\ell_2$-direct sum of a family of reflexive spaces is reflexive; and, for a rooted tree $\tee$, the Banach space $\big(\bigoplus_{t\in \MIN(\tee^\star)}J(\tee[t\preceq])\big)_{\ell_2}$ is isometrically isomorphic to a codimension 1 subspace of $J(\tee)$; the remaining details are omitted. On the other hand, if $\tee$ is not well-founded then $\tee$ contains a path order-isomorphic to $\nat$ equipped with its usual order $\leq$. It follows then by \cref{subtreefac} that $\Sigma_\tee$ factors the universal non-weakly compact operator of Lindenstrauss and Pe{\l}czy{\'n}ski (\cref{LPthm}), hence $\Sigma_\tee$ fails to be weakly compact whenever $\tee$ is not well founded.

To prove (iii), first note that if $\tee$ is finite then the codomain is the finite-dimensional space $\ell_\infty(\tee)$, hence $\Sigma_\tee$ is finite rank and therefore compact. Conversely, if $\tee$ is infinite then the set $\{ \Sigma_\tee e_t\mid t\in\tee\}$ is an infinite $1$-separated subset of $\Sigma_\tee B_{\ell_1(\tee)}$, hence in this case $\Sigma_\tee$ is non-compact, hence non-finite rank.
\end{proof}

We now establish a connection between the rank $\rho(\tee)$ and the Szlenk indices of $\Sigma_\tee$ in the particular case that $\tee$ is blossomed. The proof of the following proposition is similar to the last part of the proof Proposition~6.2 of \textcite{Beanland2018}.

\begin{proposition}\label{lbszlenk}
Let $(\tee,\preceq)$ be a blossomed tree and $\epsilon\in(0,1)$. Then $Sz(\Sigma_{\tee^\star},\epsilon)\geq \rho(\tee)$.
\end{proposition}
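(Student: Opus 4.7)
The plan is to construct, for every $t\in\tee$, a functional $x_t^\ast$ in $K:=\Sigma_{\tee^\star}^\ast B_{\ell_\infty(\tee^\star)^\ast}$ that survives $\rho_\tee(t)$ applications of the derivation $s_\epsilon$, and then apply this to the root. Specifically, set $x_t^\ast := \chi_{\tee^\star[\preceq t]}\in \ell_\infty(\tee^\star)=\ell_1(\tee^\star)^\ast$. For $t\in\tee^\star$, $x_t^\ast$ is the image under $\Sigma_{\tee^\star}^\ast$ of the norm-one evaluation functional $f\mapsto f(t)$ in $\ell_\infty(\tee^\star)^\ast$, and for the root $t_0\in\MIN(\tee)$ one has $\tee^\star[\preceq t_0]=\emptyset$, so $x_{t_0}^\ast=0$; in both cases $x_t^\ast\in K$. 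The goal is to prove, by transfinite induction on $\rho_\tee(t)$, that $x_t^\ast\in s_\epsilon^{\rho_\tee(t)}(K)$. Since $\rho(\tee)=\rho_\tee(t_0)+1$, applying this at $t=t_0$ gives $s_\epsilon^{\rho(\tee)-1}(K)\neq\emptyset$ and hence $Sz(\Sigma_{\tee^\star},\epsilon)\geq\rho(\tee)$.

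The key technical lemma is the weak$^\ast$ convergence $x_{s_n}^\ast\to x_t^\ast$ in $\ell_1(\tee^\star)^\ast$ for any $t\in\tee\setminus\MAX(\tee)$ and any enumeration $(s_n)_{n<\omega}$ of $\tee[t+]$ (which is countably infinite by Definition~\ref{gaspardefn}). Indeed, for each fixed $u\in\tee^\star$ the equality $\chi_{\tee^\star[\preceq s_n]}(u)=\chi_{\tee^\star[\preceq t]}(u)$ fails only when $u=s_n$, which happens for at most one $n$; applied to a general $x\in\ell_1(\tee^\star)$ this yields $\langle x_{s_n}^\ast - x_t^\ast,x\rangle = x(s_n)\to 0$ by $\ell_1$-summability. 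With this in hand the induction proceeds: the base case $\rho_\tee(t)=0$ ($t\in\MAX(\tee)$) is trivial. For a successor step $\rho_\tee(t)=\zeta+1$, the blossomed property combined with \eqref{rankeqn} yields $\rho_\tee(s)=\zeta$ for cofinitely many children $s\in\tee[t+]$, so $x_s^\ast\in s_\epsilon^\zeta(K)$ for cofinitely many such $s$ by induction; given any weak$^\ast$ neighborhood $\yoo$ of $x_t^\ast$, cofinitely many $x_s^\ast$ lie in $\yoo$, and two distinct such $s_1,s_2$ produce $\|x_{s_1}^\ast - x_{s_2}^\ast\|_{\ell_\infty(\tee^\star)}=1>\epsilon$ (the two functionals differ by $\pm 1$ at the coordinates $s_1,s_2\in\tee^\star$ and vanish elsewhere), so $\diam(\yoo\cap s_\epsilon^\zeta(K))>\epsilon$ and hence $x_t^\ast\in s_\epsilon^{\zeta+1}(K)$.

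For a limit step $\rho_\tee(t)=\xi$, fix any $\zeta<\xi$: the blossomed property gives $\rho_\tee(s)\geq\zeta$ for cofinitely many $s\in\tee[t+]$, so by induction these $x_s^\ast$ lie in the weak$^\ast$-closed set $s_\epsilon^\zeta(K)$; the weak$^\ast$ convergence then forces $x_t^\ast\in s_\epsilon^\zeta(K)$, and intersecting over $\zeta<\xi$ yields $x_t^\ast\in s_\epsilon^\xi(K)$. The only genuine technical point is the weak$^\ast$ convergence $x_{s_n}^\ast\to x_t^\ast$, which reduces via $\ell_1$-summability to the observation that $x_{s_n}^\ast$ and $x_t^\ast$ agree at every coordinate except the distinguished coordinate $s_n$; everything else is routine transfinite bookkeeping driven by the monotone enumeration of children guaranteed by the blossomed property.
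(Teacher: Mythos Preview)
Your proof is correct and follows essentially the same approach as the paper. Your functionals $x_t^\ast=\chi_{\tee^\star[\preceq t]}$ are exactly the paper's $g_t^\ast=\Sigma_{\tee^\star}^\ast f_t^\ast$, the weak$^\ast$ convergence along children and the $\epsilon$-separation are verified in the same way, and the transfinite induction is the same idea; the only cosmetic difference is that you split the inductive step into successor and limit cases, whereas the paper treats both at once by showing $g_t^\ast\in s_\epsilon^{\rho_\tee(t_m)+1}(B)$ for every $m$ and then intersecting.
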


\begin{proof}
Fix $\epsilon \in(0,1)$, let $t_\emptyset$ denote the root of $\tee$ and let $B=\Sigma_{\tee^\star}^\ast B_{\ell_\infty(\tee^\star)^\ast}$. For each $t\in\tee^\star$ let $f_t^\ast\in\ell_\infty(\tee^\star)^\ast$ be the evaluation functional of $\ell_\infty(\tee^\star)$ at $t$ and let $g_t^\ast:= \Sigma_{\tee^\star}^\ast f_t^\ast \in B$. Let $g_{t_\emptyset}^\ast:=0\in B$. Note that $\{ g_t^\ast\mid t\in\tee\}$ is $\epsilon$-separated, for if $s,t\in\tee$ are such that $s\notin\tee[t\preceq]$ then $\Vert g_t^\ast-g_s^\ast\Vert \geq \langle g_t^\ast-g_s^\ast,e_t\rangle=1>\epsilon$. For each $t\in\tee\setminus\MAX(\tee)$ let $(t_m)_{m=0}^\infty$ be an enumeration of $\tee[t+]$ with $(\rho_\tee(t_m))_{m=0}^\infty$ non-decreasing. For every $x\in\ell_1(\tee^\star)$ and $t\in\tee\setminus\MAX(\tee)$ we have $\langle g_{t_m}^\ast,x\rangle = \sum_{s\preceq t_m}x(s)\rightarrow \sum_{s\preceq t}x(s)=\langle g_t^\ast,x\rangle$ as $m\rightarrow \infty$, so that $(g_{t_m}^\ast)_{m=0}^\infty$ is weak$^\ast$-convergent to $g_t^\ast$.

We will show by transfinite induction that 
\begin{equation}\label{nearlycrawling}
\{ g_t^\ast\mid t\in\tee^{(\xi)}\setminus\tee^{(\xi+1)}\}\subseteq s_\epsilon^\xi(B).
\end{equation}
for every $\xi<\rho(\tee)$. \cref{nearlycrawling} is trivially true for $\xi=0$. Suppose $\zeta\in(0,\rho(\tee))$ is such that \cref{nearlycrawling} holds for every $\xi<\zeta$; to complete the induction we show that $\{ g_t^\ast\mid t\in\tee^{(\zeta)}\setminus\tee^{(\zeta+1)}\}\subseteq s_\epsilon^\zeta(B)$. To this end suppose $t\in\tee^{(\zeta)}\setminus\tee^{(\zeta+1)}$. By the inductive hypothesis and the fact that $(\rho_\tee(t_m))_{m=1}^\infty$ is non-decreasing we have that $(g_{t_l}^\ast)_{l=m}^\infty\subseteq s_\epsilon^{\rho_\tee(t_m)}(B)$. Thus $g_t^\ast \in s_\epsilon^{\rho_\tee(t_m)}(B)$ as the latter set is weak$^\ast$-closed. This yields $g_t^\ast \in s_\epsilon^{\rho_\tee(t_m)+1}(B)$ since $\Vert g_t^\ast-g_{t_l}^\ast\Vert>\epsilon$ for all $l\in [m,\omega)$. It follows that $g_t^\ast\in\bigcap_{m<\omega}s_\epsilon^{\rho_\tee(t_m)+1}(B)=s_\epsilon^{\rho_\tee(t)}(B)=s_\epsilon^\zeta(B)$, 
as required. With the induction now complete, taking $\xi=\rho(\tee)-1$ in \cref{nearlycrawling} yields $g_{t_\emptyset}^\ast \in s_\epsilon^{\rho(\tee)-1}(B)$, from which the proposition follows.
\end{proof}

\section{Absolutely convex sets of large Szlenk index}\label{mainlemmasection}
This section is devoted to proving our key technical result, \cref{oopsblah}, from which the main results of the paper (stated in the Introduction) and a number of other results in subsequent sections of the paper are derived. In order to state \cref{oopsblah} we introduce the following notation. For a family $\mathfrak{T}=((\tee_n,\preceq_n))_{n<\omega} $, where each $(\tee_n,\preceq_n)$ is a rooted tree, define $\llbracket\mathfrak{T}\rrbracket := \{ \emptyset\} \cup \bigcup_{n<\omega}(\{ n\}\times \tee_n^\star)$, so that $\llbracket\mathfrak{T}\rrbracket$ is a rooted tree when equipped with the order $\preceq_\mathfrak{T}$ on $\llbracket\mathfrak{T}\rrbracket$ defined by setting $\emptyset \preceq_\mathfrak{T} t$ for all $t\in \llbracket\mathfrak{T}\rrbracket$ and $(n_1,t_1)\preceq_\mathfrak{T} (n_2,t_2)$ if and only if $n_1=n_2$ and $t_1\preceq_{n_1}t_2$.

\begin{theorem}\label{oopsblah}
Let $X$ be a Banach space, $K\subseteq X^\ast$ a non-empty, absolutely convex, weak$^\ast$-compact set, $\delta,\theta>0$ positive real numbers, $(\epsilon_n)_{n<\omega}$ a family of positive real numbers, $(\xi_n)_{n<\omega}$ a family of countable ordinals such that $s_{\epsilon_n}^{\xi_n}(K)\neq \emptyset$ for all $n<\omega$, and $\mathfrak{T}=( (\tee_n,\preceq_n))_{n<\omega}$ a family of countable, well-founded, rooted trees such that $\rho(\tee_n)\leq\xi_n+1$ for all $n<\omega$.
\begin{itemize}
\item[(i)] There exist families $(x_t^\ast)_{t\in\llbracket\mathfrak{T}\rrbracket^\star}\subseteq K$ and $(x_t)_{t\in\llbracket\mathfrak{T}\rrbracket^\star}\subseteq S_X$ such that 
\begin{equation}\label{biorthogger}
\langle x_t^\ast ,x_s\rangle=\begin{cases}
\langle x_s^\ast,x_s\rangle>\frac{\epsilon_n}{8+\theta}&\mbox{if }s\preceq_\mathfrak{T} t\in \{n\}\times \tee_n^\star\\ 0 &\mbox{if }s\npreceq_\mathfrak{T} t
\end{cases}, \qquad s,t\in \llbracket\mathfrak{T}\rrbracket^\star,\,n<\omega.
\end{equation}
Moreover, for any bijection $\tau:\omega\longrightarrow \llbracket\mathfrak{T}\rrbracket$ such that $\tau(l)\preceq_\mathfrak{T} \tau(m)$ implies $l\leq m$ we may choose $(x_t)_{t\in\llbracket\mathfrak{T}\rrbracket^\star}$ so that $(x_{\tau(m)})_{m=1}^\infty$ is a basic sequence with basis constant not exceeding $1+\delta$.
\item[(ii)] If $X^\ast$ is norm separable then the families $(x_t^\ast)_{t\in\llbracket\mathfrak{T}\rrbracket^\star}$ and $(x_t)_{t\in\llbracket\mathfrak{T}\rrbracket^\star}$ in (i) may be chosen so that $(x_{\tau(m)})_{m=1}^\infty$ is shrinking and $(Qx_{\tau(m)})_{m=1}^\infty$ is a shrinking basis for $X/Z$ with basis constant not exceeding $1+\delta$. Here $Z=\bigcap_{t\in\llbracket\mathfrak{T}\rrbracket^\star}\ker(x_t^\ast)$ and $Q:X\longrightarrow X/Z$ is the quotient map. 
\end{itemize}
\end{theorem}

The proof of \cref{oopsblah} shall invoke the following lemma due to G.~Lancien\autocite[][p.67]{Lancien1996}, who established the result for the special case $K=B_{X^\ast}$ and $\zeta$ of the form $\omega^\alpha$ for some ordinal $\alpha$; the same argument gives the more general statement presented below.
\begin{lemma}\label{obstaclecentre}
Let $X$ be a Banach space, $K\subseteq X^\ast$ an absolutely convex, weak$^\ast$-compact set, $\zeta$ an ordinal and $\epsilon>0$. 
If $s_\epsilon^\zeta(K)\neq\emptyset$ then
\begin{equation}\label{dingraniel}
\forall \,n<\omega\quad 0\in s_{\epsilon/2^{n+1}}^{\zeta 2^n}(K).
\end{equation}
\end{lemma}

We require the following result, which is Lemma~2.2 of \textcite{Dilworth2017}.
\begin{lemma}\label{lowerlemma}
Let $X$ be a Banach space, $\nu>0$ a real number, $F$ a finite-dimensional subspace of $X^\ast$, $A$ a $\frac{\nu}{4+2\nu}$-net in $S_F$ and $\{ y_{f^\ast}\mid f^\ast\in A\}\subseteq S_X$ a family such that $\inf\{ \vert f^\ast(y_{f^\ast})\vert\mid f^\ast\in A\}\geq \frac{4+\nu}{4+2\nu}$. Then for every $x^\ast\in \{ y_{f^\ast}\mid f^\ast\in A\}^\perp$ we have $ \sup\{ \vert x^\ast (y)\vert\mid y\in B_{F_\perp} \}\geq \frac{1}{2+\nu}\Vert x^\ast\Vert$.
\end{lemma}

We do not know a reference for the following result, so we provide the straightforward proof.
\begin{lemma}\label{zippinbase}
Let $X$ be a Banach space and $x^\ast\in X^\ast$. Then \[ \{ x^\ast + \epsilon B_{X^\ast}^\circ + C^\perp \mid \epsilon>0,\, C\subseteq X, \, \vert C\vert <\infty\} \] is a local base for the weak$^\ast$ topology of $X^\ast$ at $x^\ast$.
\end{lemma}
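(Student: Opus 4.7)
The plan is to verify the two standard conditions for a local base: every member of the given collection is a weak$^\ast$-neighborhood of $x^\ast$, and every basic weak$^\ast$-neighborhood of $x^\ast$ contains some such member.

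The direction showing that a basic weak$^\ast$-neighborhood contains one of the given sets proceeds by direct estimation. A basic weak$^\ast$-neighborhood of $x^\ast$ has the form $W = \{ y^\ast\in X^\ast\mid \vert y^\ast(x_i) - x^\ast(x_i)\vert < \delta,\, i=1,\ldots,n\}$ for some $\delta>0$ and some finite set $C:=\{ x_1,\ldots,x_n\}\subseteq X$. Choosing $\epsilon>0$ small enough that $\epsilon(1+\max_{i\leq n}\Vert x_i\Vert)\leq \delta$, any $y^\ast\in x^\ast+\epsilon B_{X^\ast}^\circ+C^\perp$ can be written $y^\ast= x^\ast + z^\ast + w^\ast$ with $\Vert z^\ast\Vert<\epsilon$ and $w^\ast\in C^\perp$, whence $\vert y^\ast(x_i)- x^\ast(x_i)\vert = \vert z^\ast(x_i)\vert\leq \Vert z^\ast\Vert \Vert x_i\Vert <\delta$ for each $i\leq n$, giving $x^\ast+ \epsilon B_{X^\ast}^\circ + C^\perp\subseteq W$.

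The main obstacle is the other direction: showing that each $U:=x^\ast+\epsilon B_{X^\ast}^\circ + C^\perp$ is itself weak$^\ast$-open, which is not immediate because $B_{X^\ast}^\circ$ is not weak$^\ast$-open in infinite dimensions. The strategy is to factor through the finite-dimensional quotient $X^\ast/C^\perp$. Let $\pi: X^\ast\longrightarrow X^\ast/C^\perp$ denote the quotient map; since $\pi(B_{X^\ast}^\circ)$ is exactly the open unit ball of the quotient norm, the set $V:= \pi(x^\ast)+\epsilon\pi(B_{X^\ast}^\circ)$ is norm-open in $X^\ast/C^\perp$ and satisfies $U=\pi^{-1}(V)$. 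It thus remains to show that $\pi$ is continuous from the weak$^\ast$-topology of $X^\ast$ to the norm topology of $X^\ast/C^\perp$. Writing $C=\{x_1,\ldots,x_n\}$, the evaluation map $\Phi:X^\ast\longrightarrow \field^n$ defined by $\Phi(y^\ast):=(y^\ast(x_1),\ldots,y^\ast(x_n))$ has $\ker\Phi=C^\perp$ and hence factors as $\Phi=\tilde\Phi\circ\pi$ for some linear injection $\tilde\Phi:X^\ast/C^\perp\longrightarrow\field^n$. Because $\tilde\Phi$ has finite-dimensional image it is a linear homeomorphism onto that image, and because $\Phi$ is weak$^\ast$-continuous by the very definition of the weak$^\ast$-topology, we conclude that $\pi=\tilde\Phi^{-1}\circ\Phi$ is continuous as desired. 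Thus $U$ is a weak$^\ast$-open set containing $x^\ast$, completing the proof.
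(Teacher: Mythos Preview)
Your proof is correct, and the argument via the quotient map $\pi:X^\ast\to X^\ast/C^\perp$ is cleaner than the paper's. The paper also reduces to showing that $\epsilon B_{X^\ast}^\circ + C^\perp$ is weak$^\ast$-open, but it does this by hand: it fixes a bounded projection $P$ of $X$ onto $\spn(C)$, shows via a direct estimate that any $y^\ast$ in a sufficiently small basic weak$^\ast$-neighborhood of $0$ satisfies $\Vert P^\ast y^\ast\Vert<\epsilon$ (so that $y^\ast=P^\ast y^\ast+(Id_{X^\ast}-P^\ast)y^\ast\in \epsilon B_{X^\ast}^\circ+C^\perp$), and then translates this neighborhood to an arbitrary point $\epsilon u^\ast+v^\ast$ of the set by scaling by $1-\Vert u^\ast\Vert$. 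Your approach bypasses the projection and the translation argument entirely by recognizing the set as $\pi^{-1}$ of a norm-open ball in a finite-dimensional quotient and observing that $\pi$ is weak$^\ast$-to-norm continuous; this packages the same finite-dimensionality in a more conceptual way and avoids the bookkeeping with $\lambda$ and $\Vert P\Vert$.
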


\begin{proof}
We assume $x^\ast=0$, from which the general case follows easily. For $\epsilon>0$ and finite $C\subseteq X$ we have $\epsilon B_{X^\ast}^\circ+C^\perp\subseteq \bigcap_{x\in C}\{ y^\ast \in X^\ast \mid \vert y^\ast (x)\vert <\epsilon\}$, so it suffices to show that $\epsilon B_{X^\ast}^\circ+C^\perp$ is weak$^\ast$-open for $\epsilon>0$ and finite $C\subseteq X$.

Fix $\epsilon>0$ and a finite set $C\subseteq X$. Set $Y=\spn(C)$. Notice that $C^\perp=Y^\perp$. We have
\[
\epsilon B_{X^\ast}^\circ +C^\perp = \{ x^\ast \in X^\ast\mid \dist(x^\ast,Y^\perp)<\epsilon\} = Q^{-1}(\epsilon B_{X^\ast/Y^\perp}^\circ)
\]
where $Q:X^\ast\longrightarrow X^\ast /Y^\perp$ is the canonical quotient map. It is thus enough to show that $Q$ is weak$^\ast$ continuous. This follows from the classic decomposition $Q=S\circ R$, where $R:X^\ast\longrightarrow Y^\ast$ is the restriction map, which is clearly weak$^\ast$ continuous, and $S:Y^\perp \longrightarrow X^\ast /Y^\perp$, defined by $S(Rx^\ast)=Qx^\ast$, is weak$^\ast$ continuous as a bounded linear map between finite dimension normed spaces.
\end{proof}

\begin{lemma}\label{turnbeak}
Let $X$ be a Banach space, $K$ and $L$ weak$^\ast$-compact subsets of $X^\ast$, $\xi$ an ordinal and $\epsilon>0$. If $x^\ast\in s_\epsilon^\xi(K)$ and $y^\ast\in L$ then $x^\ast+y^\ast\in s_\epsilon^\xi(K+L)$.
\end{lemma}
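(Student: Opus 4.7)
The plan is a straightforward transfinite induction on $\xi$, with the underlying observation being that, for any fixed $y^\ast \in X^\ast$, the translation map $\tau_{y^\ast}: X^\ast \longrightarrow X^\ast$ defined by $\tau_{y^\ast}(z^\ast) = z^\ast + y^\ast$ is simultaneously a weak$^\ast$-to-weak$^\ast$ homeomorphism and a norm isometry. Before starting the induction, note that $K+L$ is the image of $K \times L$ under the addition map $X^\ast\times X^\ast \longrightarrow X^\ast$, which is weak$^\ast$-continuous; since $K\times L$ is compact in the product weak$^\ast$-topology, $K+L$ is weak$^\ast$-compact, so the derivation $s_\epsilon$ is defined on $K+L$.

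For the base case $\xi=0$ the statement reduces to $x^\ast+y^\ast \in K+L$, which is immediate. For a limit ordinal $\xi$ we have $s_\epsilon^\xi(K)=\bigcap_{\zeta<\xi}s_\epsilon^\zeta(K)$ and $s_\epsilon^\xi(K+L)=\bigcap_{\zeta<\xi}s_\epsilon^\zeta(K+L)$, so the conclusion for $\xi$ follows at once from the inductive hypothesis applied at every $\zeta<\xi$.

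The substantive step is the successor case $\xi = \zeta+1$. Suppose the result holds at $\zeta$ and let $x^\ast\in s_\epsilon^{\zeta+1}(K)=s_\epsilon(s_\epsilon^\zeta(K))$. To verify that $x^\ast+y^\ast\in s_\epsilon(s_\epsilon^\zeta(K+L))$, let $V$ be an arbitrary weak$^\ast$-open neighbourhood of $x^\ast+y^\ast$. Then $\tau_{y^\ast}^{-1}(V) = V - y^\ast$ is a weak$^\ast$-open neighbourhood of $x^\ast$, so by the definition of $s_\epsilon$ applied to $x^\ast\in s_\epsilon(s_\epsilon^\zeta(K))$, there exist $u_1^\ast,u_2^\ast \in (V-y^\ast)\cap s_\epsilon^\zeta(K)$ with $\Vert u_1^\ast - u_2^\ast\Vert > \epsilon$. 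By the inductive hypothesis, $u_1^\ast+y^\ast,\,u_2^\ast+y^\ast \in s_\epsilon^\zeta(K+L)$; both points lie in $V$ by construction, and $\Vert (u_1^\ast+y^\ast)-(u_2^\ast+y^\ast)\Vert=\Vert u_1^\ast-u_2^\ast\Vert>\epsilon$ by the isometric nature of $\tau_{y^\ast}$. Hence $\diam(V\cap s_\epsilon^\zeta(K+L))>\epsilon$, and since $V$ was arbitrary we conclude $x^\ast+y^\ast\in s_\epsilon(s_\epsilon^\zeta(K+L))=s_\epsilon^{\zeta+1}(K+L)$, completing the induction.

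There is no real obstacle here: the argument is essentially the observation that translation by a fixed element is a homeomorphism and isometry, so it intertwines the derivation $s_\epsilon$ with itself in the appropriate sense, and the only care required is to note that $K+L$ is itself weak$^\ast$-compact so that the iterated derivation on the right-hand side is well-defined.
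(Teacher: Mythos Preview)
Your proof is correct and follows essentially the same transfinite induction as the paper's own argument: both handle the base and limit cases trivially and, in the successor step, pull back an arbitrary weak$^\ast$-neighbourhood of $x^\ast+y^\ast$ to a neighbourhood of $x^\ast$ via translation, extract two points of $s_\epsilon^\zeta(K)$ at distance greater than $\epsilon$, and push them forward. Your explicit remark that $K+L$ is weak$^\ast$-compact is a small clarification the paper leaves implicit.
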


\begin{proof}
We proceed by transfinite induction on $\xi$. The assertion of the lemma is true for $\xi=0$. Suppose that $\zeta>0$ is an ordinal such that the assertion of the lemma is true for $\xi=\zeta$; we will show that it is true then for $\xi=\zeta+1$. Let $x^\ast\in s_\epsilon^{\zeta+1}(K)$ and $y^\ast\in L$. Since $x^\ast\in s_\epsilon^{\zeta}(K)$ it follows from the induction hypothesis that $x^\ast+y^\ast \in s_\epsilon^\zeta(K+L)$. Since $x^\ast\in s_\epsilon^{\zeta+1}(K)$ we have that for any weak$^\ast$-neighbourhood $\yoo$ of $x^\ast+y^\ast$ there exist $x_1^\ast, x_2^\ast\in s_\epsilon^\zeta(K)\cap (-y^\ast+\yoo)$ such that $\Vert x_1^\ast-x_2^\ast\Vert>\epsilon$. Since $x_1^\ast+y^\ast, x_2^\ast+y^\ast\in\yoo\cap (K+L)$ and $\Vert (x_1^\ast+y^\ast)-(x_2^\ast+y^\ast)\Vert >\epsilon$, we deduce that $x^\ast+y^\ast\in s_\epsilon^{\zeta+1}(K+L)$. 

Now suppose $\zeta$ is a limit ordinal and the assertion of the lemma is true for all $\xi<\zeta$. For $x^\ast\in s_\epsilon^\zeta (K)= \bigcap_{\xi<\zeta}s_\epsilon^\xi(K)$ and $y^\ast \in L$ we have
\[
x^\ast+y^\ast\in \bigcap_{\xi<\zeta}s_\epsilon^\xi(K+L)= s_\epsilon^\zeta(K+L),
\]
which completes the induction. 
\end{proof}

The final preliminary result before proving \cref{oopsblah} is the following theorem due to Kadec\autocite{Kadec1958} and Klee\autocite{Klee1960}; a short proof due to Davis and Johnson (sketched in \textcite{Davis1973}) has been published by Lindenstrauss and Tzafriri\autocite[][p.13]{Lindenstrauss1977}.

\begin{theorem}\label{kkthm}
Let $(X,\Vert \cdot\Vert)$ be a Banach space and $c>1$ a real number. If $X^\ast$ is norm separable then $X$ admits a norm $\tripnorm \cdot\tripnorm$ such that the following properties hold:
\begin{itemize}
\item[(i)] For every $x^\ast\in X^\ast$ we have $\Vert x^\ast \Vert \leq \tripnorm x^\ast\tripnorm \leq c\Vert x^\ast\Vert$.
\item[(ii)] If $(x_n^\ast)\subseteq X^\ast$ and $x^\ast\in X^\ast$ are such that $x_n^\ast\stackrel{w^\ast}{\rightarrow}x^\ast$ and $\tripnorm x_n^\ast\tripnorm \rightarrow\tripnorm x^\ast\tripnorm$, then $\tripnorm x_n^\ast- x^\ast\tripnorm\rightarrow0$.
\end{itemize}
\end{theorem}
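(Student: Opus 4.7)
The plan is to construct the desired norm on $X$ by performing a compact Hilbertian perturbation of $\|\cdot\|$, following the approach sketched by Davis and Johnson. Since $X^\ast$ is norm separable, so is $X$; I would fix a dense sequence $(x_n^\ast)_{n=1}^\infty\subseteq B_{X^\ast}$ and positive weights $(\lambda_n)_{n=1}^\infty$ with $\sum_n\lambda_n^2\leq c^2-1$, and define $J\colon X\to\ell_2$ by $Jx:=(\lambda_n x_n^\ast(x))_{n=1}^\infty$. Then $J$ is bounded (with $\|J\|^2\leq c^2-1$), injective (because $(x_n^\ast)$ separates points of $X$), and compact (the tails $\sum_{n>N}\lambda_n^2$ vanish, so $J(B_X)$ is totally bounded in $\ell_2$); compactness transfers to $J^\ast\colon\ell_2\to X^\ast$, which therefore maps weakly convergent sequences to norm-convergent ones. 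On $X$ I would put $\tripnorm x\tripnorm:=(\|x\|^2+\|Jx\|_{\ell_2}^2)^{1/2}$, for which $\|x\|\leq\tripnorm x\tripnorm\leq c\|x\|$ is immediate. Identifying $x\mapsto(x,Jx)$ as an isometric embedding of $X$ into $X\oplus_2\ell_2$ shows that the dual norm on $X^\ast$ is the quotient norm induced by $(u^\ast,h)\mapsto u^\ast+J^\ast h$, namely
\[
\tripnorm x^\ast\tripnorm^2 \;=\; c^2\,\inf_{h\in\ell_2}\bigl(\|x^\ast-J^\ast h\|^2+\|h\|^2\bigr),
\]
after rescaling by the factor $c$ to put (i) in the stated form.

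For (ii), the functional $h\mapsto\|x^\ast-J^\ast h\|^2+\|h\|^2$ is strictly convex (from $\|h\|^2$) and coercive, so it admits a unique minimiser $h(x^\ast)\in\ell_2$; write $r(x^\ast):=x^\ast-J^\ast h(x^\ast)$. Assume $x_k^\ast\xrightarrow{w^\ast}x^\ast$ and $\tripnorm x_k^\ast\tripnorm\to\tripnorm x^\ast\tripnorm$, and set $(r_k,h_k):=(r(x_k^\ast),h(x_k^\ast))$, $(r,h):=(r(x^\ast),h(x^\ast))$. Since $(h_k)$ is bounded in $\ell_2$, any weakly convergent subsequence $h_{k_j}\to\tilde h$ yields, by compactness of $J^\ast$, norm convergence $J^\ast h_{k_j}\to J^\ast\tilde h$, so $r_{k_j}\to \tilde r:=x^\ast-J^\ast\tilde h$ weak${}^\ast$. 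The weak${}^\ast$-lower semicontinuity of $\|\cdot\|$ on $X^\ast$, the weak lower semicontinuity of $\|\cdot\|_{\ell_2}$, the identity $\tilde r+J^\ast\tilde h=x^\ast$, and the uniqueness of $h(x^\ast)$ together force $(\tilde r,\tilde h)=(r,h)$ and turn the lower-semicontinuity inequalities into equalities. Applying this to every subsequence and using Bolzano--Weierstrass for the bounded sequences of real numbers $(\|r_k\|)$ and $(\|h_k\|)$, one concludes whole-sequence weak convergence $h_k\to h$ in $\ell_2$ together with $\|h_k\|\to\|h\|$ and $\|r_k\|\to\|r\|$. The Kadets--Klee property of $\ell_2$ (trivial in Hilbert space) then upgrades $h_k\to h$ from weak convergence to norm convergence in $\ell_2$, so that $J^\ast h_k\to J^\ast h$ in the norm of $X^\ast$.

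The main obstacle I expect is the final passage to $\tripnorm x_k^\ast-x^\ast\tripnorm\to0$. Taking $h_k-h$ as a candidate in the infimum defining $\tripnorm x_k^\ast-x^\ast\tripnorm$ bounds this quantity above by $c(\|r_k-r\|^2+\|h_k-h\|^2)^{1/2}$; since $\|h_k-h\|_{\ell_2}\to 0$, it remains to show $\|r_k-r\|\to 0$ from the information $r_k\to r$ weak${}^\ast$ and $\|r_k\|\to\|r\|$. The resolution exploits the first-order optimality condition relating $r_k$ to $h_k$: formally differentiating $\|r_k-tJ^\ast\eta\|^2+\|h_k+t\eta\|^2$ at $t=0$ produces $h_k=\|r_k\|\,J(y_k)$ for a normalising vector $y_k\in S_X$ of $r_k$, and combining norm convergence of $h_k$ in $\ell_2$ with injectivity of $J$ (applied through the norming vectors of $r$) transports norm convergence back to $r_k\to r$ in $X^\ast$. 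The execution requires a careful subdifferential analysis to handle non-smoothness of $\|\cdot\|$ at $r$, but it is standard and, packaged this way, closes the verification of (ii).
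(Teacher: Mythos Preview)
Your construction is a natural alternative to the Davis--Johnson approach the paper sketches, but the verification of (ii) has a genuine gap at exactly the point you flag as ``the main obstacle''. After you establish that $h_k\to h$ in $\ell_2$-norm (hence $J^\ast h_k\to J^\ast h$ in $X^\ast$-norm) and that $r_k\to r$ weak${}^\ast$ with $\|r_k\|\to\|r\|$, you still need $\|r_k-r\|\to 0$ for the \emph{original} norm on $X^\ast$. That is precisely the weak${}^\ast$ Kadets--Klee property you are trying to manufacture, so you cannot invoke it. Your proposed resolution via the first-order optimality condition does not close this circle: the subdifferential of $h\mapsto\|x_k^\ast-J^\ast h\|^2$ at the minimiser gives $h_k\in\|r_k\|\,J^{\ast\ast}(\partial\|\cdot\|(r_k))$, where the norming elements lie in $B_{X^{\ast\ast}}$, not in $S_X$ as you write. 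Even granting $h_k=\|r_k\|\,J^{\ast\ast}y_k^{\ast\ast}$ for some norming $y_k^{\ast\ast}\in B_{X^{\ast\ast}}$, the norm convergence $J^{\ast\ast}y_k^{\ast\ast}\to J^{\ast\ast}y^{\ast\ast}$ in $\ell_2$ combined with injectivity of the \emph{compact} operator $J^{\ast\ast}$ gives at best weak${}^\ast$ convergence $y_k^{\ast\ast}\to y^{\ast\ast}$ in $X^{\ast\ast}$ (from density of $(x_n^\ast)$), never norm convergence, and this does not transport back to norm convergence of $r_k$ in $X^\ast$.

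By contrast, the paper (following Davis--Johnson and Lindenstrauss--Tzafriri) defines $\tripnorm x^\ast\tripnorm=\|x^\ast\|+(c-1)\sum_n 2^{-n}\|Q_n x^\ast\|$, where $Q_n:X^\ast\to X^\ast/B_n$ are quotients by an increasing sequence of finite-dimensional subspaces $B_n$ with dense union. Here the mechanism is different and avoids the circularity: each summand $\|Q_n\cdot\|$ is weak${}^\ast$ lower semicontinuous, and the hypothesis $\tripnorm x_k^\ast\tripnorm\to\tripnorm x^\ast\tripnorm$ forces, for each fixed $N$, $\limsup_k\|Q_N x_k^\ast\|$ to be controlled by $\|Q_N x^\ast\|$, which is small for large $N$. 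One then corrects $x_k^\ast$ by elements of the finite-dimensional $B_N$ (where weak${}^\ast$ and norm convergence coincide) to obtain $\|x_k^\ast-x^\ast\|\to 0$ directly, with no appeal to any Kadets--Klee property of the original norm.
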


As is implicit in the statement of \cref{kkthm}, when we apply the renorming result \cref{kkthm} we shall use $\tripnorm \cdot\tripnorm$ to denote also the corresponding induced norm on duals, subspaces, quotients and operators on $X$.

\begin{proof}[Proof of \cref{oopsblah}]
We shall first prove (i) without any assumption on the norm density of $X^\ast$, then show how to modify the arguments in the proof of (i) to obtain also the assertions of (ii) when $X^\ast$ is assumed to be norm separable.

Fix $\delta,\theta>0$ and let $\nu>0$ be a real number small enough that \begin{equation}\label{festyesty}
\frac{1-2\nu}{4(2+\nu)(1+\nu)} \geq\frac{1}{8+\theta}.
\end{equation} Let $\tau: \omega\longrightarrow \llbracket\mathfrak{T}\rrbracket$ be a bijection such that $\tau(l)\preceq_\mathfrak{T}\tau(m)$ implies $l\leq m$ (c.f. the paragraph preceding \cref{fullcounter}), noting that we necessarily have $\tau(0)=\emptyset$, the root of $\llbracket\mathfrak{T}\rrbracket$. For $0<m<\omega$ we may write $\tau(m)=(n_m,t_m)$, where $n_m <\omega$ and $t_m\in \tee_{n_m}^\star$. Fix a sequence $(\delta_m)_{m=1}^\infty\subseteq (0,1)$ of positive real numbers such that $\sum_{m=1}^\infty\delta_m<\infty$ and $\prod_{m=1}^\infty (1-\delta_m)\geq (1+\delta)^{-1}$. Proceeding via an induction over $m\in[1,\omega)$, we shall construct families $(f_{\tau(m)}^\ast)_{m<\omega}\subseteq X^\ast$ and $(x_{\tau(m)})_{1\leq m<\omega}\subseteq S_X$ satisfying the following conditions for all $m\in[1,\omega)$: 
\begin{itemize}
\item[(I)] $f_{\tau(m)}^\ast \in s_{\epsilon_{n_m}/2}^{\rho_{\tee_{n_m}}(t_m)}\big((1+\nu\sum_{j=1}^m 2^{-j})K\big)$;
\item[(II)] For all $i,j\in [1,m]$,
\begin{equation}\label{nutellaman}
\langle f_{\tau(j)}^\ast ,x_{\tau(i)}\rangle=\begin{cases}
\langle f_{\tau(i)}^\ast,x_{\tau(i)}\rangle >\frac{(1-2\nu)\epsilon_{n_i}}{4(2+\nu)} &\mbox{if }{\tau(i)}\preceq_\mathfrak{T} {\tau(j)}\\ 0 &\mbox{if }{\tau(i)}\npreceq_\mathfrak{T} {\tau(j)}
\end{cases}; \mbox{ and,}
\end{equation}
\item[(III)] For all $x\in \spn\{ x_{\tau(l)}\mid 1\leq l< m\}$ and scalars $a$ we have $\Vert x+ a x_{\tau(m)}\Vert \geq (1-\delta_m)\Vert x\Vert$.
\end{itemize}

Since for $0<m<\omega$ we have $s_{\epsilon_{n_m}/2}^{\rho_{\tee_{n_m}}(t_m)}\big((1+\nu\sum_{j=1}^m 2^{-j})K\big)\subseteq (1+\nu)K$, once the induction is complete the first assertion of (i) then follows from \cref{festyesty}, (I) and (II) by taking $x_t^\ast = \frac{1}{1+\nu}f_t^\ast$ for each $t\in\llbracket\mathfrak{T}\rrbracket^\star$.

The second assertion of (i) follows from the Grunblum criterion\autocite[See e.g.][Proposition~1.1.9]{Albiac2016} and the fact that, by (III), for $1\leq l< m<\omega$ and scalars $a_1,\ldots,a_m$ we have
\[
\Big\Vert \sum_{q=1}^la_qx_{\tau(q)}\Big\Vert \leq \frac{1}{\displaystyle\prod_{q=l+1}^m(1-\delta_q)}\Big\Vert \sum_{q=1}^ma_qx_{\tau(q)}\Big\Vert
\leq (1+\delta)\Big\Vert \sum_{q=1}^ma_qx_{\tau(q)}\Big\Vert\,.
\] 

For each $n<\omega$ let $o_n$ denote the root of $\tee_n$. It follows from \cref{dingraniel} that 
\begin{equation}\label{inallsets}0\in \bigcap_{n<\omega}s_{\epsilon_n/2}^{\xi_n}(K)\subseteq \bigcap_{n<\omega}s_{\epsilon_n/2}^{\rho(\tee_n)-1}(K)= \bigcap_{n<\omega}s_{\epsilon_n/2}^{\rho_{\tee_n}(o_n)}(K).\end{equation}
Define $ f_{\tau(0)}^\ast:=0 \in \bigcap_{n<\omega}s_{\epsilon_n/2}^{\xi_n}(K)$. Since 
\[
f_{\tau(0)}^\ast \in s_{\epsilon_{n_1}/2}^{\rho_{\tee_{n_1}}(o_{n_1})}(K)\subseteq s_{\epsilon_{n_1}/2}^{\rho_{\tee_{n_1}}(t_1)+1}(K)=s_{\epsilon_{n_1}/2}\big(s_{\epsilon_{n_1}/2}^{\rho_{\tee_{n_1}}(t_1)}(K)\big),
\]
it follows from the definition of the derivation $s_{\epsilon_{n_1}/2}$ that there is $f_{\tau(1)}^\ast\in s_{\epsilon_{n_1}/2}^{\rho_{\tee_{n_1}}(t_1)}(K)$ such that $\Vert f_{\tau(1)}^\ast \Vert = \Vert f_{\tau(1)}^\ast - f_{\tau(0)}^\ast\Vert >\epsilon_{n_1}/4$. Choose $x_{\tau(1)}\in S_X$ so that \[ \langle f_{\tau(1)}^\ast, x_{\tau(1)}\rangle >\frac{\epsilon_{n_1}}{4}
.\] It is readily checked that (I)-(III) hold for $m=1$.

Fix $k\in [1,\omega)$ and suppose that the points $f_{\tau(m)}^\ast\in s_{\epsilon_{n_m}/2}^{\rho_{\tee_{n_m}}(t_m)}\big((1+\nu\sum_{j=1}^m 2^{-j})K\big)$ and $x_{\tau(m)}\in S_X$ have been defined for $1\leq m\leq k$ in such a way that properties (I)-(III) are satisfied for $1\leq m\leq k$. Let $(k+1)^-$ denote the unique ordinal less than $k+1$ and such that $\tau((k+1)^-)=\tau(k+1)^-$. To carry out the inductive step of the proof we show how to construct $f_{\tau(k+1)}^\ast\in s_{\epsilon_{n_{k+1}}/2}^{\rho_{\tee_{n_{k+1}}}(t_{k+1})}\big((1+\nu\sum_{j=1}^{k+1} 2^{-j})K\big)$ and $x_{\tau(k+1)}\in S_X$ so that (I)-(III) are satisfied for $1\leq m\leq k+1$. Our first task will be to define $f_{\tau(k+1)}^\ast$ as a point inside a certain weak$^\ast$-neighbourhood of $f_{\tau(k+1)^-}^\ast$ and then show that (I) holds for $m=k+1$. To this end let $G$ be a finite $\delta_{k+1}$-net in $S_{\spn\{ x_{\tau(i)}\mid 1\leq i\leq k \}}$ and for each $g\in G$ let $h_g^\ast\in X^\ast$ be such that $\langle h_g^\ast ,g\rangle=1$. Set $F= \spn\big( \{ f_{\tau(i)}^\ast\mid 1\leq i\leq k \} \cup \{ h_g^\ast \mid g\in G\} \big)\subseteq X^\ast$, let $A$ be a finite $\frac{\nu}{4+2\nu}$-net in $S_F$ and let $\{ y_{f^\ast}\mid f^\ast\in A \}\subseteq S_X$ be such that $f^\ast(y_{f^\ast})\geq \frac{4+\nu}{4+2\nu}$ for each $f^\ast\in A$. Let
\begin{align*}
\yoo_1&=\bigcap_{i=1}^k\Big\{ x^\ast\in X^\ast\,\,\Big\vert\,\, \vert \langle x^\ast - f_{\tau(k+1)^-}^\ast,x_{\tau(i)}\rangle\vert<\frac{2^{-k-4}\nu(1-2\nu)\epsilon_{n_i}}{k(2+\nu)(1+\nu)} \Big\};\mbox{ and,}\\
\yoo_2&= f_{\tau(k+1)^-}^\ast + \frac{\nu\epsilon_{n_{k+1}}}{4(2+\nu)}B_{X^\ast}^\circ+ \big(\{ x_{\tau(i)}\mid 1\leq i\leq k \}\cup\{ y_{f^\ast}\mid f^\ast\in A\}\big)^\perp.
\end{align*}
Note that $\yoo_2$ is a weak$^\ast$-open neighbourhood of $f_{\tau(k+1)^-}^\ast$ by \cref{zippinbase}, hence $\yoo:= \yoo_1\cap\yoo_2$ is a weak$^\ast$-open neighbourhood of $f_{\tau(k+1)^-}^\ast$. On the one hand, if $(k+1)^-=0$ then $f_{\tau(k+1)^-}^\ast=0$, hence by \cref{inallsets} in this case we have
\begin{align} f_{\tau(k+1)^-}^\ast=0\in s_{\epsilon_{n_{k+1}}/2}^{\rho_{\tee_{n_{k+1}}}(o_{n_{k+1}})}(K)&\subseteq s_{\epsilon_{n_{k+1}}/2}^{\rho_{\tee_{n_{k+1}}}(o_{n_{k+1}})}\big((1+\nu\sum_{j=1}^k2^{-j})K\big) \notag \\ &\subseteq s_{\epsilon_{n_{k+1}}/2}^{\rho_{\tee_{n_{k+1}}}(t_{k+1})+1}\big((1+\nu\sum_{j=1}^k2^{-j})K\big) \label{holdjohn}.
\end{align} 
On the other hand, if $(k+1)^-\neq0$ then $n_{(k+1)^-}=n_{k+1}$ and $t_{(k+1)^-}=t_{k+1}^-\in\tee_{n_{k+1}}$. So by the hypothesis that (I) holds for $1\leq m\leq k$, we have 
\begin{align}
f_{\tau(k+1)^-}^\ast\in s_{\epsilon_{n_{k+1}}/2}^{\rho_{\tee_{n_{k+1}}}(t_{k+1}^-)}\big((1+\nu\sum_{j=1}^{(k+1)^-}2^{-j})K\big)&\subseteq s_{\epsilon_{n_{k+1}}/2}^{\rho_{\tee_{n_{k+1}}}(t_{k+1}^-)}\big((1+\nu\sum_{j=1}^k2^{-j})K\big)\notag\\&\subseteq s_{\epsilon_{n_{k+1}}/2}^{\rho_{\tee_{n_{k+1}}}(t_{k+1})+1}\big((1+\nu\sum_{j=1}^k2^{-j})K\big).\label{sesbar}\end{align}

It follows from \cref{holdjohn,sesbar}, and the definition of the derivation $s_\epsilon^\xi$ for $\epsilon>0$ and $\xi\in\ord$ that there exists $u^\ast\in\yoo\cap s_{\epsilon_{n_{k+1}}/2}^{\rho_{\tee_{n_{k+1}}}(t_{k+1})}\big((1+\nu\sum_{j=1}^k2^{-j})K\big)$ such that $\Vert f_{\tau(k+1)^-}^\ast - u^\ast\Vert>\epsilon_{n_{k+1}}/4$. Define
\[
f_{\tau(k+1)}^\ast:= u^\ast -\sum_{l=1}^k  \frac{\langle u^\ast- f_{\tau(k+1)^-}^\ast, x_{\tau(l)} \rangle}{\langle f_{\tau(l)}^\ast,x_{\tau(l)} \rangle}(f_{\tau(l)}^\ast -f_{\tau(l)^-}^\ast).
\]

Since $u^\ast\in\yoo_1$ and since $f_{\tau(l)}^\ast - f_{\tau(l)^-}^\ast\in2(1+\nu)K$ for $1\leq l\leq k$ (by the assumption that (I) holds for $1\leq m\leq k$), it follows from the definition of $f_{\tau(k+1)}^\ast$ that we have $f_{\tau(k+1)}^\ast-u^\ast \in cK$, where $c>0$ is a scalar that may be taken to satisfy
\begin{align}
c&\leq
\sum_{l=1}^k\frac{\vert \langle u_{\tau(k+1)}^\ast-f_{\tau(k+1)^-}^\ast ,x_{\tau(l)}\rangle\vert}{\vert \langle f_{\tau(l)}^\ast ,x_{\tau(l)}\rangle\vert}2(1+\nu) \notag\\
&\leq \sum_{l=1}^k\frac{2^{-k-4}\nu(1-2\nu)\epsilon_{n_l}}{k(2+\nu)(1+\nu)} \frac{4(2+\nu)}{(1-2\nu)\epsilon_{n_l}}2(1+\nu) \notag\\\notag 
&=   \sum_{l=1}^k \frac{2^{-k-1}\nu}{k}  \\
&= \nu2^{-k-1}.\label{kimmingswap}
\end{align}
An appeal to \cref{turnbeak} yields
\begin{equation*}
f_{\tau(k+1)}^\ast = u^\ast + (f_{\tau(k+1)}^\ast-u^\ast)\in 
s_{\epsilon_{n_{k+1}}/2}^{\rho_{\tee_{n_{k+1}}}(t_{k+1})}\big((1+\nu\sum_{j=1}^{k+1}2^{-j})K\big)
\end{equation*}
hence (I) holds for $m=k+1$.

We now show how to define $x_{\tau(k+1)}$ and then verify that (II) and (III) hold for $m=k+1$. Since $u^\ast\in\yoo_2$ we may write $u^\ast = f_{\tau(k+1)^-}^\ast+ y^\ast +x^\ast$, where $\Vert y^\ast\Vert <\frac{\nu\epsilon_{n_{k+1}}}{4(2+\nu)}$ and $x^\ast \in (\{ x_{\tau(i)}\mid 1\leq i\leq k \}\cup\{ y_{f^\ast}\mid f^\ast\in A\})^\perp$. Since
\[
\Vert x^\ast\Vert \geq \Vert u^\ast - f_{\tau(k+1)^-}^\ast\Vert - \Vert y^\ast\Vert >  \frac{\epsilon_{n_{k+1}}}{4} - \frac{\nu\epsilon_{n_{k+1}}}{4(2+\nu)}>
\frac{(1-\nu)\epsilon_{n_{k+1}}}{4},
\]
an application of \cref{lowerlemma} with $F$, $A$ and $\{ y_{f^\ast}\mid f^\ast\in A \}\subseteq S_X$ as defined above in the current proof yields $y\in S_{F_\perp}$ such that 
\[
\langle x^\ast ,y\rangle > \frac{1}{2+\nu}\frac{(1-\nu)\epsilon_{n_{k+1}}}{4}.
\]
Set $x_{\tau(k+1)}= \vert \langle u^\ast,y\rangle\vert \langle u^\ast,y\rangle^{-1}y$, so that $x_{\tau(k+1)}\in S_{F_\perp}\subseteq S_X$ and
\begin{align*}
\langle u^\ast, x_{\tau(k+1)}\rangle = \vert \langle u^\ast, y\rangle\vert = \vert \langle u^\ast- f_{\tau(k+1)^-}^\ast, y\rangle\vert = \vert \langle x^\ast +y^\ast, y\rangle\vert 
&> \frac{(1-\nu)\epsilon_{n_{k+1}}}{4(2+\nu)} - \frac{\nu\epsilon_{n_{k+1}}}{4(2+\nu)} \\&=\frac{(1-2\nu)\epsilon_{n_{k+1}}}{4(2+\nu)}.
\end{align*}

We now show that (II) holds for $m=k+1$. By the induction hypothesis, we need to prove the case where at least one of $i$ and $j$ is equal to $k+1$. Since $x_{\tau(k+1)}\in S_{F_\perp}\subseteq \bigcap_{j=1}^k\ker(f_{\tau(j)}^\ast)$ we have $\langle f_{\tau(j)}^\ast,x_{\tau(k+1)}\rangle=0$ for $1\leq j\leq k$. Moreover it is clear from the definition of $f_{\tau(k+1)}^\ast$ and the fact that $x_{\tau(k+1)}\in \bigcap_{j=1}^k\ker(f_{\tau(j)}^\ast)$ that 
\[
\langle f_{\tau(k+1)}^\ast,x_{\tau(k+1)}\rangle= \langle u^\ast,x_{\tau(k+1)}\rangle-0 = \langle u^\ast,x_{\tau(k+1)}\rangle >\frac{(1-2\nu)\epsilon_{n_{k+1}}}{4(2+\nu)}.\]
Since (II) holds for $1\leq m\leq k$, if $l,i\in[1,k]$ then
\begin{equation}\label{rumpole}
\langle f_{\tau(l)}^\ast - f_{\tau(l)^-}^\ast,x_{\tau(i)}\rangle=\begin{cases} \langle f_{\tau(i)}^\ast ,x_{\tau(i)}\rangle,&\mbox{if }i=l\\0,&\mbox{if }i\neq l \end{cases}.
\end{equation}
It follows that if $1\leq i\leq k$ then
\begin{align*}
\langle f_{\tau(k+1)}^\ast, x_{\tau(i)}\rangle
&= \langle u^\ast,x_{\tau(i)}\rangle - \frac{\langle u^\ast-f_{\tau(k+1)^-}^\ast,x_{\tau(i)}\rangle}{\langle f_{\tau(i)}^\ast,x_{\tau(i)}\rangle}\langle f_{\tau(i)}^\ast ,x_{\tau(i)}\rangle \notag\\&=\langle f_{\tau(k+1)^-}^\ast,x_{\tau(i)}\rangle\\ &=\begin{cases} \langle f_{\tau(i)}^\ast,x_{\tau(i)}\rangle > \frac{(1-2\nu)\epsilon_{n_i}}{4(2+\nu)}&\mbox{if }\tau(i)\preceq_\mathfrak{T}\tau(k+1) \\ 0&\mbox{if }\tau(i)\npreceq_\mathfrak{T}\tau(k+1)\end{cases},
\end{align*}
hence (II) holds for $m=k+1$.

Finally, we show that (III) holds for $m=k+1$. Let $a$ be a scalar and, to avoid triviality, let $x\in \spn\{ x_{\tau(i)}\mid 1\leq i\leq k\} $ be nonzero. Let $g_x\in G$ be such that $\Vert \Vert x\Vert^{-1}x-g_x\Vert \leq \delta_{k+1}$. Since $\langle h_{g_x}^\ast, g_x\rangle = 1$ and $x_{\tau(k+1)}\in\ker(h_{g_x}^\ast)$ we have
\begin{align*}
\Vert x+a x_{\tau(k+1)}\Vert &\geq \vert \langle h_{g_x}^\ast, x+a x_{\tau(k+1)}\rangle\vert \\&= \vert \langle h_{g_x}^\ast, \Vert x\Vert^{-1}x\rangle\vert \Vert x\Vert\\& \geq \big(\vert \langle h_{g_x}^\ast, g_x\rangle\vert - \vert \langle h_{g_x}^\ast, \Vert x\Vert^{-1}x-g_x\rangle\vert\big) \Vert x\Vert \\&\geq (1-\delta_{k+1})\Vert x\Vert ,
\end{align*}
which completes the proof of part (i) of the theorem.

We now prove (ii). To this end suppose $X^\ast$ is norm separable and let $(z_m^\ast)_{m=1}^\infty$ a norm dense sequence in $X^\ast$. To see that $(x_{\tau(m)})_{m=1}^\infty$ may be chosen to be a shrinking basis, we modify the proof of (i) by extending the list of conditions (I)-(III) to include the following fourth condition:
\begin{itemize}
\item[(IV)] $x_{\tau(m)}\in \bigcap_{j=1}^{m-1}\ker(z_j^\ast)$.
\end{itemize}
In the inductive construction involving the verification of properties (I)-(III), we amend the argument to ensure that (IV) holds for all $m\in[1,\omega)$ as follows. For the base step we require no change in the argument since $\bigcap_{j=1}^0\ker(z_j^\ast)=X$. For the inductive step we change the definition of $F$ so that
\[
F= \spn\big( \{ f_{\tau(i)}^\ast\mid 1\leq i\leq k \} \cup \{ h_g^\ast \mid g\in G\}\cup\{ z_j^\ast\mid 1\leq j\leq k\} \big)\,.
\]
Since $x_{\tau(k+1)}$ is defined so that $x_{\tau(k+1)}\in S_{F_\perp}$, the induction yields that (IV) holds for all $m\in[1,\omega)$. With this modification it is now easy to see that $(x_{\tau(m)})_{m=1}^\infty$ is shrinking. Indeed, let $f^\ast \in [(x_{\tau(m)})_{m=1}^\infty]^\ast$ and fix $\epsilon>0$. Let $\tilde{f}^\ast\in X^\ast$ be an extension of $f^\ast$ to $X$ and let $N<\omega$ be such that $\Vert \tilde{f}^\ast-z_N^\ast\Vert<\epsilon$. By (IV) we have $\langle z_N^\ast,x_{\tau(m)}\rangle=0$ for all $m>N$, hence $m>N$ implies
\[
\Vert f^\ast|_{[x_{\tau(m)}]_{m>N}}\Vert=\Vert \tilde{f}^\ast|_{[x_{\tau(m)}]_{m>N}}\Vert\leq \Vert (\tilde{f}^\ast-z_N^\ast)|_{[x_{\tau(m)}]_{m>N}}\Vert+\Vert z_N^\ast|_{[x_{\tau(m)}]_{m>N}}\Vert<\epsilon\,.
\]
In particular, $\lim_{M\rightarrow\infty}\Vert f^\ast|_{[x_{\tau(m)}]_{m>M}}\Vert =0$. As $f^\ast \in [(x_{\tau(m)})_{m=1}^\infty]^\ast$ was chosen arbitrarily, $(x_{\tau(m)})_{m=1}^\infty$ is shrinking by Proposition 3.2.8 of \textcite{Albiac2016}.

We now show how to modify the proof of (i) further so that $(Qx_{\tau(m)})_{m=1}^\infty$ is a shrinking basis for $X/Z$ with basis constant not exceeding $1+\delta$. In a similar spirit to the proof of Proposition~3.5 of \textcite{Lancien1996}, the main idea is to modify the proof of (i) to incorporate the arguments from Johnson and Rosenthal's proof of Theorem~III.1 of \textcite{Johnson1972}. To this end let $\tripnorm \cdot\tripnorm$ be an equivalent norm on $X$ such that properties (i) and (ii) of \cref{kkthm} hold with $c=(1+\delta)^{1/2}$ and let $(z_p)_{p=1}^\infty$ be a norm dense sequence in $S_{(X,\tripnorm\cdot\tripnorm)}$. Fix a sequence $(\delta_m')_{m=1}^\infty\subseteq (0,1)\subseteq \real$ such that $\sum_{m=1}^\infty\delta_m'<\infty$ and $\prod_{m=1}^\infty (1-\delta_m')\geq (1+\delta)^{-1/2}$. At the $k$th stage of the inductive construction, after having defined the new $f_{\tau(k)}^\ast$, we set $v_k:= \tripnorm f_{\tau(k)}^\ast-f_{\tau(k)^-}^\ast\tripnorm^{-1}(f_{\tau(k)}^\ast-f_{\tau(k)^-}^\ast)$. We modify the inductive construction in proof of (i) to include the construction of a strictly increasing sequence $(p_m)_{m=0}^\infty\subseteq \omega$ such that the following additional conditions hold for $m\in[1,\omega)$:
\begin{itemize}
\item[(V)] For each $v^\ast \in [(v_j)_{j=1}^m]^\ast$ with $\tripnorm v^\ast\tripnorm =1$ there is a natural number $p\leq p_m$ such that $\vert \langle v,z_p\rangle - \langle v^\ast,v\rangle\vert \leq \tripnorm v\tripnorm\delta_m'/3$ for all $v\in [(v_j)_{j=1}^m]$.
\item[(VI)] $\vert \langle v_m, z_p\rangle\vert <\delta_m'/3$ for all $z_p$ with $p\leq p_{m-1}$. 
\end{itemize}
At the base step of the induction we set $p_0=0$ and use Helly's theorem (or Goldstine's theorem), the density of $(z_p)_{p=1}^\infty$ in $S_{(X,\tripnorm\cdot\tripnorm)}$ and the total boundedness of $S_{([\{v_1\}],\tripnorm\cdot\tripnorm)}$ and $S_{([\{v_1\}]^\ast,\tripnorm\cdot\tripnorm)}$ to obtain also $p_1>p_0$ large enough that (V) holds for $m=1$ (we leave the straightforward details to the reader). Since $p_0$ is defined to be $0$, which is not in the index set of the sequence $(z_p)_{p=1}^\infty$, (VI) is true for $m=1$.

At the inductive step of the modified construction we assume that for some $k\in [1,\omega)$ the properties (I)-(VI) hold for $m=1,\ldots k$. We again use Helly's theorem to obtain $p_{k+1}>p_k$ so that (V) holds for $m=k+1$. To obtain that (VI) is true for $m=k+1$ we modify the argument in the proof of (i) as follows. For $1\leq i\leq k$ let $c_i\in\real$ be such that
\[
kc_i\frac{4(2+\nu)}{(1-2\nu)\epsilon_{n_i}} 2( 1+\nu)\sup\{ \Vert y^\ast\Vert\mid y^\ast\in K \}= \frac{1}{2}\frac{\delta_{k+1}'}{3}\frac{(1-2\nu)\epsilon_{n_{k+1}}}{4(2+\nu)}\frac{1}{(1+\delta)^{1/2}}
\] and define
\begin{align*}
\yoo_3&=\bigcap_{i=1}^k\Big\{ x^\ast\in X^\ast\,\,\Big\vert\,\, \vert \langle x^\ast - f_{\tau(k+1)^-}^\ast,x_{\tau(i)}\rangle\vert<c_i \Big\};\mbox{ and,}\\
\yoo_4&=\bigcap_{p=1}^{p_k}\Big\{ x^\ast\in X^\ast\,\,\Big\vert\,\, \vert \langle x^\ast - f_{\tau(k+1)^-}^\ast,z_p\rangle\vert< \frac{1}{2}\frac{\delta_{k+1}'}{3}\frac{(1-2\nu)\epsilon_{n_{k+1}}}{4(2+\nu)}\Big\}.
\end{align*}
We modify the definition of $\yoo$ in the proof of (i) so that $\yoo:= \yoo_1\cap \yoo_2\cap\yoo_3\cap\yoo_4$ and, as in the proof of (i), choose $u^\ast\in\yoo\cap s_{\epsilon_{n_{k+1}}/2}^{\rho_{\tee_{n_{k+1}}}(t_{k+1})}\big((1+\nu\sum_{j=1}^k2^{-j})K\big)$ such that $\Vert f_{\tau(k+1)^-}^\ast - u^\ast\Vert>\epsilon_{n_{k+1}}/4$. Now we define $f_{\tau(k+1)}^\ast$ and $x_{\tau(k+1)}$ as before. It is clear that properties (I)-(III) hold for them (by the same proof as before). Since (II) holds for $1\leq m\leq k$ and since $u^\ast\in\yoo_3$, it follows from the definition of $f_{\tau(k+1)}^\ast$ that 
\[
\Vert f_{\tau(k+1)}^\ast -u^\ast\Vert \leq\frac{1}{2}\frac{\delta_{k+1}'}{3}\frac{(1-2\nu)\epsilon_{n_{k+1}}}{4(2+\nu)}\frac{1}{(1+\delta)^{1/2}}.
\]
Hence, since $u^\ast \in\yoo_4$ we deduce that for all $p\leq p_k$ we have
\begin{align}\label{justcort}
\vert \langle f_{\tau(k+1)}^\ast & -f_{\tau(k+1)^-}^\ast ,z_p\rangle\vert \notag \\&\leq \Vert f_{\tau(k+1)}^\ast -u^\ast\Vert \Vert  z_p\Vert+\vert \langle u^\ast -f_{\tau(k+1)^-}^\ast ,z_p\rangle\vert \notag \\&\leq \frac{1}{2}\frac{\delta_{k+1}'}{3}\frac{(1-2\nu)\epsilon_{n_{k+1}}}{4(2+\nu)}\frac{1}{(1+\delta)^{1/2}}(1+\delta)^{1/2} + \frac{1}{2}\frac{\delta_{k+1}'}{3}\frac{(1-2\nu)\epsilon_{n_{k+1}}}{4(2+\nu)}\notag \\& = \frac{\delta_{k+1}'}{3}\frac{(1-2\nu)\epsilon_{n_{k+1}}}{4(2+\nu)}.
\end{align}
Since \[ \tripnorm f_{\tau(k+1)}^\ast -f_{\tau(k+1)^-}^\ast\tripnorm\geq\Vert f_{\tau(k+1)}^\ast -f_{\tau(k+1)^-}^\ast \Vert \geq \langle f_{\tau(k+1)}^\ast -f_{\tau(k+1)^-}^\ast, x_{\tau(k+1)}\rangle> \frac{(1-2\nu)\epsilon_{n_{k+1}}}{4(2+\nu)},\] it follows from \cref{justcort} that for \[ v_{k+1}:= \tripnorm f_{\tau(k+1)}^\ast-f_{\tau(k+1)^-}^\ast\tripnorm^{-1}(f_{\tau(k+1)}^\ast-f_{\tau(k+1)^-}^\ast)\] we have $\vert \langle v_{k+1},z_p\rangle\vert \leq \delta_{k+1}'/3$ for all $p\leq p_k$. Thus (I)-(VI) hold for all $m\in[1,\omega)$ with these modifications to the proof of (i).

Still following the argument in \textcite{Johnson1972}, our next step is to show that $(v_m)_{m=1}^\infty$ is a basic sequence whose basis constant with respect to $\tripnorm\cdot\tripnorm$ is no larger than $(1+\delta)^{1/2}$. To this end fix $m\in[1,\omega)$ and let $v\in[(v_q)_{q=1}^m]$ be such that $\tripnorm v\tripnorm =1$. Choose $v^\ast\in[v_j]_{1\leq j\leq m}^\ast$ such that $\langle v^\ast ,v\rangle=1=\tripnorm v^\ast\tripnorm$ and choose $p\leq p_m$ so that (V) holds for $v^\ast$. Then $\vert\langle v,z_p\rangle\vert\geq 1-\delta_m'/3$, hence for any scalar $a$ we have \begin{align*}
\tripnorm v+a v_{m+1}\tripnorm &\geq \begin{cases}1 & \mbox{if } \vert a\vert >2\\ \vert \langle v ,z_p\rangle + \langle a v_{m+1},z_p\rangle\vert\geq (1-\frac{\delta_m'}{3})-\frac{2\delta_m'}{3}& \mbox{if }\vert a\vert\leq2
\end{cases}\\
&\geq 1-\delta_m'.
\end{align*}
It follows that $\tripnorm \sum_{q=1}^ma_qv_q\tripnorm\leq \frac{1}{1-\delta_m'}\tripnorm \sum_{q=1}^{m+1}a_qv_q\tripnorm $ for any scalars $a_1,\ldots,a_m,a_{m+1}$. Thus for $1\leq l< m<\omega$ and any scalars $a_1,\ldots,a_m$ we have
\begin{equation}\label{steefryle}
\Bigtripnorm \sum_{q=1}^la_qv_q\Bigtripnorm\leq \frac{1}{\displaystyle\prod_{q=l+1}^m(1-\delta'_q)}\Bigtripnorm \sum_{q=1}^ma_qv_q\Bigtripnorm
\leq (1+\delta)^{1/2}\Bigtripnorm \sum_{q=1}^ma_qv_q\Bigtripnorm.
\end{equation}
By Grunblum's criterion, $(v_m)_{m=1}^\infty$ is a basic sequence whose basis constant with respect to $\tripnorm\cdot\tripnorm$ is no larger than $(1+\delta)^{1/2}$.

Let $(v_m^\ast)_{m=1}^\infty$ be the sequence of functionals in $[(v_m)_{m=1}^\infty]^\ast$ biorthogonal to $(v_m)_{m=1}^\infty$ and define $T: X\longrightarrow [(v_m)_{m=1}^\infty]^\ast$ by $\langle Tx,v\rangle=\langle v,x\rangle$ for $x\in X$ and $v\in[(v_m)_{m=1}^\infty]$. That is, $Tx=(\imath_Xx)|_{[(v_m)_{m=1}^\infty]}$ for each $x\in X$. Note that $\ker(T)=\bigcap_{m=1}^\infty\ker(v_m)$. Moreover since $f_{\tau(0)}^\ast = 0$ we have \[ f_t^\ast=\sum_{\emptyset\prec_\mathfrak{T}s\preceq_\mathfrak{T}t}(f_s^\ast-f_{s^-}^\ast)\] for each $t\in\llbracket\mathfrak{T}\rrbracket$, hence
\[
\ker(T)=\bigcap_{m=1}^\infty\ker(v_m) = \bigcap_{t\in\llbracket\mathfrak{T}\rrbracket^\star}\ker(f_t^\ast-f_{t^-}^\ast)=\bigcap_{t\in\llbracket\mathfrak{T}\rrbracket^\star}\ker(f_t^\ast)=\bigcap_{t\in\llbracket\mathfrak{T}\rrbracket^\star}\ker(x_t^\ast)=Z.
\] 
It is a well known fact that the sequence of coordinate functionals associated with a basis is also a basis with the same basis constant\autocite[See e.g.][Fact 6.6]{Fabian2001}, so let us momentarily denote by $P_m$, $m\geq 1$, the basis projections associated to the basic sequence $(v_m^\ast)_{m=1}^\infty$. Notice that by \cref{steefryle} we have $\tripnorm P_m\tripnorm \rightarrow 1$ as $m\rightarrow\infty$. This together with the condition (V) is enough to prove that $T(X) = [(v_m^\ast)_{m=1}^\infty]$ (see pages 83-84 in \textcite{Johnson1972}). Therefore there exists a linear isometry $\overline{T}:(X/Z,\tripnorm\cdot\tripnorm)\longrightarrow( [(v_m^\ast)_{m=1}^\infty],\tripnorm\cdot\tripnorm)$ such that $\overline{T}Qx=Tx$ for every $x\in X$. We thus have
\[
\overline{T}Qx_{\tau(m)} =Tx_{\tau(m)} = \sum_{m'=1}^\infty\langle Tx_{\tau(m)},v_{m'}\rangle v_{m'}^\ast= \sum_{m'=1}^\infty\langle v_{m'},x_{\tau(m)}\rangle v_{m'}^\ast =\frac{\langle f_{\tau(m)}^\ast,x_{\tau(m)}\rangle}{\tripnorm f_{\tau(m)}^\ast- f_{\tau(m)^-}^\ast \tripnorm}v_m^\ast.
\]
For each $m\in[1,\omega)$ let $a_m= \langle f_{\tau(m)}^\ast,x_{\tau(m)}\rangle\tripnorm f_{\tau(m)}^\ast- f_{\tau(m)^-}^\ast \tripnorm^{-1}$, so that $\overline{T}Qx_{\tau(m)}=a_mv_m^\ast$ for each such $m$. As $\overline{T}$ is an isometry with respect to $\tripnorm\cdot\tripnorm$, then, with respect to $\tripnorm\cdot\tripnorm$, $(Qx_{\tau(m)})_{m=1}^\infty$ is a basis for $X/Z$ isometrically equivalent to $(a_mv_m^\ast)$, whose basis constant coincides with the basis constant of $(v_m^\ast)_{m=1}^\infty$, which coincides with the basis constant of $(v_m)_{m=1}^\infty$, which is no larger than $(1+\delta)^{1/2}$ (as shown above). It follows that for $1\leq l\leq m<\omega$ and any scalars $a_1,\ldots,a_m$ we have
\begin{align*}
\Big\Vert \sum_{q=1}^la_qQx_{\tau(q)}\Big\Vert&\leq(1+\delta)^{1/2}\Bigtripnorm \sum_{q=1}^la_qQx_{\tau(q)}\Bigtripnorm\\&\leq (1+\delta)\Bigtripnorm \sum_{q=1}^ma_qQx_{\tau(q)}\Bigtripnorm \\&\leq (1+\delta)\Big\Vert \sum_{q=1}^ma_qQx_{\tau(q)}\Big\Vert.
\end{align*}
In particular, $(Qx_{\tau(m)})_{m=1}^\infty$ is a basis for $X/Z$ whose basis constant with respect to $\Vert\cdot\Vert$ is no larger than $1+\delta$. It remains then to show that $(Qx_{\tau(m)})_{m=1}^\infty$ is shrinking. As $(Qx_{\tau(m)})_{m=1}^\infty$ is equivalent to $(a_mv_m^\ast)_{m=1}^\infty$, which is shrinking if and only if $(v_m^\ast)_{m=1}^\infty$ is shrinking, it follows from the duality between shrinking and boundedly complete bases\autocite[See e.g.][Corollary~6.1]{Singer1970} that, to complete the proof, it suffices to show that $(v_m)_{m=1}^\infty$ is boundedly complete. To this end recall the following definition from \textcite{Johnson1972}:
\begin{definition}
Let $X$ be a Banach space. A sequence $(y_m^\ast)_{m=1}^\infty\subseteq X^\ast$ is said to be \emph{weak$^\ast$-basic} if there is a sequence $(y_m)_{m=1}^\infty\subseteq X$ so that $(y_m,y_m^\ast)_{m=1}^\infty$ is biorthogonal and for each $y^\ast\in\overline{[y_m^\ast\mid m\in\nat]}^{w^\ast}$ we have $\sum_{q=1}^m\langle y^\ast, y_q\rangle y_q^\ast\stackrel{{weak}^\ast}{\longrightarrow}y^\ast$ as $m\rightarrow \infty$.
\end{definition}
The following result is Proposition~II.1 of \textcite{Johnson1972}.
\begin{proposition}\label{jrprop}
Let $X$ be a Banach space and $(y_m^\ast)_{m=1}^\infty\subseteq X^\ast$. Let $Q$ denote the quotient map $X\longrightarrow X/\bigcap_{m=1}^\infty\ker(y_m^\ast)$. Then
\begin{itemize}
\item[(a)] $(y_m^\ast)_{m=1}^\infty$ is weak$^\ast$-basic if and only if $X/\bigcap_{m=1}^\infty\ker(y_m^\ast)$ has a basis $(e_m)_{m=1}^\infty$ with associated biorthogonal functionals $(e_m^\ast)_{m=1}^\infty$ such that $Q^\ast e_m^\ast=y_m^\ast$ for all $m\in\nat$. It follows that if $(y_m^\ast)_{m=1}^\infty$ is weak$^\ast$-basic, then $(y_m^\ast)_{m=1}^\infty$ is basic.
\item[(b)] The following are equivalent:
\begin{itemize}
\item[(i)] $(y_m^\ast)_{m=1}^\infty$ is a boundedly complete weak$^\ast$-basic sequence;
\item[(ii)] $(y_m^\ast)_{m=1}^\infty$ is weak$^\ast$-basic and $[(y_m^\ast)_{m=1}^\infty]=\overline{[y_m^\ast\mid m\in\nat]}^{w^\ast}$.
\end{itemize}
\end{itemize}
\end{proposition}
We shall first apply (a) of \cref{jrprop} to show that $(v_m)_{m=1}^\infty$ is weak$^\ast$-basic, then apply (b) of \cref{jrprop} to deduce that $(v_m)_{m=1}^\infty$ is boundedly complete, as desired. For $m\in[1,\omega)$ let \[
e_m:=\frac{\tripnorm f_{\tau(m)^\ast}-f_{\tau(m)^-}^\ast\tripnorm}{\langle f_{\tau(m)}^\ast,x_{\tau(m)}\rangle}Qx_{\tau(m)},
\]
so that $\overline{T}e_m=v_m^\ast$. For $m\in[1,\omega)$ let 
$ 
v_m^{\ast\ast}:= (\imath_{[(v_q)_{q=1}^\infty]}v_m)|_{[(v_q^\ast)_{q=1}^\infty]} 
$
and $e_m^\ast:= \overline{T}^\ast v_m^{\ast\ast}$, so that $(v_m^{\ast\ast})_{m=1}^\infty$ and $(e_m^\ast)_{m=1}^\infty$ are the sequences of biothogonal functionals associated to the basic sequences $(v_m^\ast)_{m=1}^\infty$ and $(e_m)_{m=1}^\infty$, respectively. For $1\leq m<\omega$ and $x\in X$ we have
\begin{equation}\label{aldishopping}
\langle Q^\ast e_m^\ast,x\rangle = \langle e_m^\ast,Qx\rangle = \langle v_m^{\ast\ast},\overline{T}Qx\rangle = \langle Tx,v_m\rangle=\langle v_m,x\rangle,
\end{equation}
hence $Q^\ast e_m^\ast=v_m$ for each $m\in[1,\omega)$. By \cref{jrprop}(a), $(v_m)_{m=1}^\infty$ is weak$^\ast$-basic. By \cref{jrprop}(b), to complete the proof of \cref{oopsblah} it now suffices to show that $[(v_m)_{m=1}^\infty]=\overline{[v_m\mid m\in\nat]}^{w^\ast}$.

For each $m\in[1,\omega)$ let $y_m=\langle v_m,x_{\tau(m)}\rangle^{-1}x_{\tau(m)}$ so that, by \cref{biorthogger} and the definition of $v_m$, the system $(y_m,v_m)_{m=1}^\infty \subseteq X\times X^\ast$ is biorthogonal. Following now the proof of Theorem~III.2 in \textcite{Johnson1972}, for each $M\in[1,\omega)$ the operator 
\[
S_M\colon \overline{[v_m\mid m\in\nat]}^{w^\ast}\longrightarrow \overline{[v_m\mid m\in\nat]}^{w^\ast}
\]
given by setting $S_My^\ast = \sum_{m=1}^M\langle y^\ast,y_m\rangle v_m$ for each $y^\ast\in\overline{[v_m\mid m\in\nat]}^{w^\ast}$ satisfies $\tripnorm S_M\tripnorm\leq \prod_{m=M}^\infty \frac{1}{1-\delta_m'}$. Suppose $y^\ast\in\overline{[v_m\mid m\in\nat]}^{w^\ast}$. Then $(S_My^\ast)_{M=1}^\infty$ converges weak$^\ast$ to $y^\ast$ since $(v_m)_{m=1}^\infty$ is weak$^\ast$-basic, hence $\liminf_M\tripnorm S_My^\ast\tripnorm \geq \tripnorm y^\ast\tripnorm$. On the other hand, since $\tripnorm S_M\tripnorm\rightarrow 1$ we have $\limsup_M\tripnorm S_My^\ast\tripnorm \leq \tripnorm y^\ast\tripnorm$. It follows that $\tripnorm S_My^\ast\tripnorm \rightarrow  \tripnorm y^\ast\tripnorm$ as $M\rightarrow \infty$, hence $\tripnorm S_My^\ast -y^\ast \tripnorm \rightarrow 0$ since $\tripnorm \cdot\tripnorm$ satisfies property (ii) of \cref{kkthm}. As $S_My^\ast\in [(v_m)_{m=1}^\infty]$ for all $M$, we conclude that $y^\ast \in [(v_m)_{m=1}^\infty]$, which completes the proof of \cref{oopsblah}.
\end{proof}

The following corollary of \cref{oopsblah} may be useful in situation where one considers the $\epsilon$-Szlenk index for only a single $\epsilon >0$ (rather than a for a sequence $(\epsilon_n)_{n<\omega}$), such as the work in the current paper on universal operators.

\begin{corollary}\label{birthdaybun}
Let $X$ be a Banach space, $K\subseteq X^\ast$ an absolutely convex, weak$^\ast$-compact set, $\epsilon>0$, $\xi>0$ a countable ordinal, and $(\tee,\preceq)$ a countable, well-founded tree such that $\rho(\tee)\leq\xi+1$. If $s_{\epsilon}^\xi(K)\neq \emptyset$ then there exist families $(x_t^\ast)_{t\in\tee}\subseteq K$ and $(x_t)_{t\in\tee}\subseteq S_X$ such that 
\begin{equation}\label{piorpogger}
\langle x_t^\ast ,x_s\rangle=\begin{cases}
\langle x_s^\ast,x_s\rangle>\frac{\epsilon}{17}&\mbox{if }s\preceq t\\ 0 &\mbox{if }s\npreceq t
\end{cases}, \qquad s,t\in \tee.
\end{equation}
\end{corollary}

\begin{proof}
Suppose $s_{\epsilon}^\xi(K)\neq \emptyset$ so that, by \cref{obstaclecentre}, $s_{\epsilon/2}^{\xi+1}(K)\supseteq s_{\epsilon/2}^{\xi2}(K)\neq \emptyset$. Let $t_0$ be a set such that $t_0\notin\tee$ and let $(\tee_0,\preceq_0)$ be the tree obtained by setting $\tee_0=\tee\cup\{t_0\}$ and extending $\preceq$ to $\tee_0$ by making $t_0$ the unique minimal element of $\tee_0$. Let $\xi_0= \xi+1$, so that $\rho(\tee_0)\leq \xi_0+1$ and $s_{\epsilon/2}^{\xi_0}(K)\neq\emptyset$. The conclusion of the corollary follows from an application of \cref{oopsblah}(i) with $\theta=1/2$, $\epsilon_n=\epsilon/2$ for all $n<\omega$, $\xi_n=0$ for $0<n<\omega$, and $(\tee_n,\preceq_n)$ a tree consisting of a single node for $0<n<\omega$ (since $(\llbracket \mathfrak{T}\rrbracket^\star,\preceq_{\mathfrak{T}}) = (\{ 0\} \times \tee,\preceq_{\mathfrak{T}})$ is, in this case, naturally order isomorphic to $\tee$).
\end{proof}

\section{Structure of Banach spaces of large Szlenk index}\label{basissection}
In this section we provide the proofs of \cref{firstbasistheorem} and \cref{drainyourflagon}. In doing so we continue with the notation introduced in \cref{mainlemmasection}. First we prove \cref{firstbasistheorem}.

\begin{proof}[Proof of \cref{firstbasistheorem}]
Fix $\theta\in (0,\sqrt{65}-8)$ and fix $\{\epsilon_n\mid n<\omega\}$, a countable dense subset of $(0,\infty)\subseteq \real$. We apply \cref{oopsblah} with $K=B_{X^\ast}$, $\xi_n=Sz(X,\epsilon_n)-1$ for each $n<\omega$, and $\mathfrak{T}=((\tee_n,\preceq_n))_{n<\omega}$ a family of blossomed trees with $\rho(\tee_n)=Sz(X,\epsilon_n)$ for each $n<\omega$, to obtain families $(x_t)_{t\in\llbracket\mathfrak{T}\rrbracket^\star}\subseteq S_X$ and $(x_t^\ast)_{t\in \llbracket\mathfrak{T}\rrbracket^\star}\subseteq B_{X^\ast}$ such that
\begin{equation}\label{myorthogger}
\langle x_t^\ast ,x_s\rangle=\begin{cases}
\langle x_s^\ast,x_s\rangle>\frac{\epsilon_n}{8+\theta}&\mbox{if }s\preceq_\mathfrak{T} t\in \{n\}\times \tee_n^\star\\ 0 &\mbox{if }s\npreceq_\mathfrak{T} t
\end{cases}, \qquad s,t\in \llbracket\mathfrak{T}\rrbracket^\star,\,n<\omega.
\end{equation}
 
Let $Y_0=[x_t]_{t\in \llbracket\mathfrak{T}\rrbracket^\star}$. It follows from \cref{myorthogger} that for every $n<\omega$ the set $\{ x_{(n,t)}^\ast\mid t\in\tee_n^\star\}$  is $\frac{\epsilon_n}{8+\theta}$-separated and that for every $t\in\tee_n^\star$ the sequence $(x_{(n,u)}^\ast)_{u\in\tee_n[t+]}$ converges weak$^\ast$ in $Y_0^\ast$ to $x_{n,t}^\ast$. Now an easy inductive argument (similar to that used to prove \cref{nearlycrawling}) shows that\[ \forall \,n<\omega\quad \forall\, (n,t)\in \{ n\} \times\tee_n^\star\qquad  x_{(n,t)}^\ast\vert_{Y_0}\in s_{\epsilon_n/(8+\theta)}^{\rho_{\tee_n^\star}(t)}(B_{Y_0^\ast}).\] Hence, since $(x_{(n,u)}^\ast)_{u\in\tee_n[\emptyset+]}$ converges weak$^\ast$ in $Y_0^\ast$ to $0$, we obtain that \[0\in s_{\epsilon_n/(8+\theta)}^{\rho(\tee_n)-1}(B_{Y_0^\ast})= s_{\epsilon_n/(8+\theta)}^{Sz(X,\epsilon_n)-1}(B_{Y_0^\ast})\] for each $n<\omega$. It follows that
\begin{equation}\label{fauxbounder}
\forall \,n<\omega \quad Sz\Big(Y_0,\frac{\epsilon_n}{8+\theta}\Big)\geq Sz(X,\epsilon_n)\,.
\end{equation}
Let $\tau:\omega\longrightarrow \llbracket\mathfrak{T}\rrbracket$ be a bijection such that $\tau(m)\preceq_\mathfrak{T}\tau(m')$ implies $m\leq m'$. Since $Sz(Y_0)<\omega_1$ and $Y_0$ is separable, the dual $Y_0^\ast$ is norm separable. Thus, by \cref{fauxbounder} and \cref{oopsblah} there exist families $(y_t)_{t\in\llbracket\mathfrak{T}\rrbracket^\star}\subseteq S_{Y_0}$ and $(y_t^\ast)_{t\in \llbracket\mathfrak{T}\rrbracket^\star}\subseteq B_{Y_0^\ast}$ such that
\begin{equation}\label{quyorthogger}
\langle y_t^\ast ,y_s\rangle=\begin{cases}
\langle y_s^\ast,y_s\rangle>\frac{\epsilon_n}{65}&\mbox{if }s\preceq_\mathfrak{T} t\in \{n\}\times \tee_n^\star\\ 0 &\mbox{if }s\npreceq_\mathfrak{T} t
\end{cases}, \qquad s,t\in \llbracket\mathfrak{T}\rrbracket^\star,\,n<\omega.
\end{equation}
and $(y_{\tau(m)})_{m=1}^\infty$ is a shrinking basic sequence with basis constant not exceeding $1+\delta$.
Let $Y=[ (y_{\tau(m)})_{m=1}^\infty]$. It follows from \cref{quyorthogger} that\[ \forall \,n<\omega\quad \forall\, (n,t)\in \{ n\} \times\tee_n^\star\qquad  y_{(n,t)}^\ast\vert_Y\in s_{\epsilon_n/65}^{\rho_{\tee_n^\star}(t)}(B_{Y^\ast})\] and, subsequently, that $0\in s_{\epsilon_n/65}^{\rho(\tee_n)-1}(B_{Y^\ast})= s_{\epsilon_n/65}^{Sz(X,\epsilon_n)-1}(B_{Y^\ast})$ for each $n<\omega$. Thus,
\begin{equation}\label{bauxbounder}
\forall \,n<\omega \quad Sz\Big(Y,\frac{\epsilon_n}{65}\Big)\geq Sz(X,\epsilon_n)\,.
\end{equation}

For each $\epsilon>0$ choose $N(\epsilon)<\omega$ such that $\epsilon_{N(\epsilon)}\in [\frac{65\epsilon}{66},\epsilon]$. From \cref{bauxbounder} we obtain
\[
\forall\,\epsilon>0\quad Sz\Big(Y,\frac{\epsilon}{66}\Big)\geq Sz\Big(Y,\frac{\epsilon_{N(\epsilon)}}{65}\Big)\geq Sz(X,\epsilon_{N(\epsilon)})\geq Sz(X,\epsilon),
\]
which completes the proof of the theorem.
\end{proof}

Two applications of \cref{oopsblah} were used in the proof of \cref{firstbasistheorem} - the first to achieve separable reduction and the second to obtain a \emph{shrinking} basic sequence. Clearly, if $X$ is assumed norm separable then only one application of \cref{oopsblah} is required, in which case the number $65$ in \cref{wuppertare} may be replaced by $8+\theta$ for any $\theta>0$. Moreover, in the general case we may replace $65$ by $16+\theta$ for any $\theta>0$; this is achieved by proving a version of Lemma~3.4 of \textcite{Lancien1996} for families $(\epsilon_n)_{n<\omega}\subseteq (0,\infty)$ and blossomed trees $((\tee_n,\preceq_n))_{n<\omega}$ (as in the proof \cref{oopsblah}), then applying this generalisation of Lemma~3.4 of \textcite{Lancien1996} to achieve separable reduction in the proof of \cref{firstbasistheorem} with $\epsilon/2$ (rather than $\epsilon/(8+\theta)$) replacing $\epsilon$.

We now prove \cref{drainyourflagon}.

\begin{proof}[Proof of \cref{drainyourflagon}]
Fix a countable, dense subset $\{\epsilon_n\mid n<\omega\}$ of $(0,\infty)$. We apply \cref{oopsblah} with $\theta=\frac12$, $K=B_{X^\ast}$, $\xi_n=Sz(X,\epsilon_n)-1$ for each $n<\omega$, and $\mathfrak{T}=((\tee_n,\preceq_n))_{n<\omega}$ a family of blossomed trees with $\rho(\tee_n)=Sz(X,\epsilon_n)$ for each $n<\omega$. Let $\tau:\omega\longrightarrow \llbracket\mathfrak{T}\rrbracket$ be a bijection such that $\tau(m)\preceq_\mathfrak{T}\tau(m')$ implies $m\leq m'$. By \cref{oopsblah} there exist families $(x_t)_{t\in\llbracket\mathfrak{T}\rrbracket^\star}\subseteq S_X$ and $(x_t^\ast)_{t\in \llbracket\mathfrak{T}\rrbracket^\star}\subseteq B_{X^\ast}$ such that
\begin{equation}\label{spyorthogger}
\langle x_t^\ast ,x_s\rangle=\begin{cases}
\langle x_s^\ast,x_s\rangle>\frac{2\epsilon_n}{17}&\mbox{if }s\preceq_\mathfrak{T} t\in \{n\}\times \tee_n^\star\\ 0 &\mbox{if }s\npreceq_\mathfrak{T} t
\end{cases}, \qquad s,t\in \llbracket\mathfrak{T}\rrbracket^\star,\,n<\omega.
\end{equation}
and $(Qx_{\tau(m)})_{m=1}^\infty$ is a shrinking basis for $X/\bigcap_{t\in\llbracket\mathfrak{T}\rrbracket^\star}\ker(x_t^\ast)$ with basis constant not exceeding $1+\delta$, where $Q:X\longrightarrow X/\bigcap_{t\in\llbracket\mathfrak{T}\rrbracket^\star}\ker(x_t^\ast)$ is the quotient map. Let $Z=\bigcap_{t\in\llbracket\mathfrak{T}\rrbracket^\star}\ker(x_t^\ast)$. To complete the proof we will show that \cref{scupperscare} holds.

We may assume that the families $(x_t)_{t\in\llbracket\mathfrak{T}\rrbracket^\star}$ and $(x_t^\ast)_{t\in\llbracket\mathfrak{T}\rrbracket^\star}$ above are those constructed in the proof of \cref{oopsblah}. Let $(f_t^\ast)_{t\in\llbracket\mathfrak{T}\rrbracket^\star}$ and $(v_m)_{m=1}^\infty$ also be as in the proof of \cref{oopsblah}. We have
\begin{equation}\label{couldntbe}
\spn\{ x_t^\ast \mid t\in\llbracket\mathfrak{T}\rrbracket^\star\} = \spn\{ f_t^\ast\mid t\in\llbracket\mathfrak{T}\rrbracket^\star\} = \spn\{ v_m\mid 1\leq m<\omega\}\subseteq Q^\ast \big( (X/Z)^\ast\big),
\end{equation}
where the first equality is immediate from the definitions, the second equality follows from the inductively verified fact that
\[
\forall\,k<\omega \quad \spn\{ f_{\tau(m)}^\ast\mid 1\leq m\leq k\}=\spn\{ v_m\mid 1\leq m\leq k\},
\] and the final inclusion follows from \cref{aldishopping}. Since $\Vert Q\Vert = 1$ and since $Q^\ast$ is an isometric embedding it follows respectively that $Qx_t\in B_{X/Z}$ and that $(Q^\ast)^{-1}(x_t^\ast)$ is a well-defined element of $B_{(X/Z)^\ast}$ for every $t\in \llbracket\mathfrak{T}\rrbracket^\star$. Since for $s,t\in \llbracket\mathfrak{T}\rrbracket^\star $ and $n<\omega$ we have
\begin{equation*}
\langle (Q^\ast)^{-1}(x_t^\ast) ,Qx_s\rangle=\langle x_t^\ast ,x_s\rangle=\begin{cases}
\langle x_s^\ast,x_s\rangle>\frac{2\epsilon_n}{17}&\mbox{if }s\preceq_\mathfrak{T} t\in \{n\}\times \tee_n^\star\\ 0 &\mbox{if }s\npreceq_\mathfrak{T} t
\end{cases}
\end{equation*}
and, since $\spn\{ Qx_t\mid t\in \llbracket\mathfrak{T}\rrbracket^\star\}$ is norm dense in $X/Z$, we have that the family $\{ (Q^\ast)^{-1}(x_{(n,t)}^\ast)\mid t\in\tee_n^\ast \}$ is $\frac{2\epsilon_n}{17}$-separated and that for every $t\in\tee_n$ the sequence $((Q^\ast)^{-1}(x_{(n,u)}^\ast))_{u\in\tee_n[t+]}$ converges weak$^\ast$ in $(X/Z)^\ast$ to $(Q^\ast)^{-1}(x_{(n,t)}^\ast)$. Now an easy inductive argument (similar to that used to prove \cref{nearlycrawling}) yields\[ \forall \,n<\omega\quad \forall\, (n,t)\in \{ n\} \times\tee_n^\star\qquad  (Q^\ast)^{-1}(x_{(n,t)}^\ast)\in s_{2\epsilon_n/17}^{\rho_{\tee_n^\star}(t)}(B_{(X/Z)^\ast})\] and, subsequently, that $0\in s_{2\epsilon_n/17}^{\rho(\tee_n)-1}(B_{(X/Z)^\ast})= s_{2\epsilon_n/17}^{Sz(X,\epsilon_n)-1}(B_{(X/Z)^\ast})$ for each $n<\omega$. Thus,
\begin{equation}\label{sauxbounder}
\forall \,n<\omega \quad Sz\Big(X/Z,\frac{2\epsilon_n}{17}\Big)\geq Sz(X,\epsilon_n)\,.
\end{equation}
For each $\epsilon>0$ choose $N(\epsilon)<\omega$ such that $\epsilon_{N(\epsilon)}\in [\frac{17\epsilon}{18},\epsilon]$. From \cref{sauxbounder} we obtain
\[
\forall\,\epsilon>0\quad Sz\Big(X/Z,\frac{\epsilon}{9}\Big)\geq Sz\Big(X/Z,\frac{2\epsilon_{N(\epsilon)}}{17}\Big)\geq Sz(X,\epsilon_{N(\epsilon)})\geq Sz(X,\epsilon),
\]
which completes the proof of the theorem.
\end{proof}

\section{Universal operators of large Szlenk index}\label{universalsection}
The first and main undertaking of this section is the proof of \cref{countuniv} which provides the classification of the ordinals $\beta$ for which the class $\complement\szlenkop_\beta$ admits a universal element.

\begin{proof}[Proof of \cref{countuniv}]
Fix $\epsilon'>0$ small enough that $s_{\epsilon'}^{\omega^\alpha}(T^\ast (B_{Y^\ast}))\neq\emptyset$ and fix $N<\omega$ large enough that $\rho(\tee) \leq \omega^\alpha 2^N+1$. Set $\epsilon = 2^{-N-1}\epsilon'$, so that $s_\epsilon^{\rho(\tee)-1}(T^\ast (B_{Y^\ast}))\neq \emptyset$ by \cref{dingraniel}. \cref{birthdaybun} yields families $(x_t)_{t\in\tee^\star}\subseteq S_X$ and $(x_t^\ast)_{t\in\tee^\star}\subseteq T^\ast B_{Y^\ast}$ such that 
\begin{equation*}
\langle x_t^\ast ,x_s\rangle=\begin{cases}
\langle x_s^\ast,x_s\rangle>\frac{\epsilon}{17}&\mbox{if }s\preceq t\\ 0 &\mbox{if }s\npreceq t
\end{cases}, \quad s,t\in \tee^\star\,.
\end{equation*}
By \cref{factorkar}, $\Sigma_{\tee^\star}$ factors through $T$.

We now suppose that $\tee$ is blossomed and $\rho(\tee)\geq \omega^\alpha$. Since $\tee$ is infinite and rooted we have $\rho(\tee)\geq2$ and that $\rho(\tee)$ is a successor ordinal. It follows that $\rho(\tee)>\omega^\alpha$, hence by \cref{lbszlenk} we have $Sz(\Sigma_{\tee^\star}) \geq Sz(\Sigma_{\tee^\star},\epsilon)\geq\rho(\tee)>\omega^\alpha, $
so that $\Sigma_{\tee^\star}\in\complement\szlenkop_\alpha$. It follows that $\Sigma_{\tee^\star}$ is universal for $\complement \szlenkop_\alpha$.

Finally, let $\beta$ be an arbitrary ordinal. If $\beta<\omega_1$ then, by the second assertion of \cref{countuniv}, $\Sigma_{\tee_{\omega^\beta}^\star}$ is universal for $\complement\szlenkop_\beta$, where $\tee_{\omega^\beta}$ is as constructed in \cref{blossexist}. Now suppose on the other hand that $\beta\geq\omega_1$; to complete the proof we show that $\complement\szlenkop_\beta$ does not admit a universal element. Suppose by way of contraposition that $\complement\szlenkop_\beta$ \textit{does} admit a universal element, $\Upsilon$ say. Since\autocite[][Theorem~2.6]{Brooker2013} $Sz(C(\omega^{\omega^\beta}+1))= \omega^{\beta+1}$, where $C(\omega^{\omega^\beta}+1)$ denotes the Banach space of continuous scalar-valued functions on the compact ordinal $\omega^{\omega^\beta}+1$, we have that $\Upsilon$ factors through the identity operator of $C(\omega^{\omega^\beta}+1)$. It follows that $Sz(\Upsilon)$ satisfies $Sz(\Upsilon)\leq Sz(C(\omega^{\omega^\beta}+1))= \omega^{\beta+1}$ by the ideal property of $\szlenkop_{\beta+1}$. Moreover the identity operator of $\ell_1$ belongs to $\complement\szlenkop_\beta$ since $\ell_1$ is not an Asplund space, hence $\Upsilon$ factors through $\ell_1$ and, in particular, $\Upsilon$ has separable range. It thus follows by \cref{sepsepseppy} that $Sz(\Upsilon)<\omega^{\omega_1}=\omega_1$, hence $\Upsilon\in \szlenkop_{\omega_1}\subseteq \szlenkop_{\beta}$ - a contradiction. Thus $\complement\szlenkop_\beta$ does not admit a universal element whenever $\beta\geq\omega_1$.
\end{proof}

In \cref{nonspear} we will study whether each of the classes $\asplundop\cap\complement\szlenkop_\beta$, $\beta\geq\omega_1$, admits a universal element of the form $\Sigma_\tee$ for an uncountable tree $\tee$.

\begin{remark}\label{softerbristles}
It is straightforward to observe that we may replace $\Sigma_{\tee^\star}$ by $\Sigma_\tee$ in the statement of \cref{countuniv}. However, the reason for our choice of $\Sigma_{\tee^\star}$ over $\Sigma_\tee$ is that universal operators may be thought of as `minimal' elements of the class for which they are universal, and the operator $\Sigma_{\tee^\star}$ can be thought of as naturally `smaller' than $\Sigma_\tee$ since $\tee^\star$ is a subtree of $\tee$ and $\Sigma_{\tee_\star}$ therefore factors through $\Sigma_\tee$ by \cref{subtreefac}. Moreover, $\tee$ is not order isomorphic to a subtree of $\tee^\star$ since $\tee$ is assumed to be well-founded.
\end{remark}

The following result can be proved directly using standard Szlenk index techniques, however as we shall refer to this result later we provide a quick proof here using \cref{countuniv}.

\begin{corollary}\label{spoindex}
Let $(\tee,\preceq)$ be an infinite blossomed tree. Then $Sz(\Sigma_{\tee^\star})= \rho(\tee)\omega$.
\end{corollary}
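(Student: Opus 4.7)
The plan is to derive both inequalities of $Sz(\Sigma_{\tee^\star})=\rho(\tee)\omega$ as quick consequences of Theorem~\ref{countuniv}, combined with the computation $Sz(C(\omega^{\omega^\gamma}+1))=\omega^{\gamma+1}$ from Theorem~2.6 of \cite{Brooker2013} that is already invoked in the proof of Theorem~\ref{countuniv}.

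I will begin by fixing notation. Since $\tee$ is rooted and well-founded, $\rho(\tee)$ is a successor ordinal; write $\rho(\tee)=\beta+1$. Since $\tee$ is also infinite, its root is not a leaf, so $\beta\geq 1$. Let $\gamma$ be the unique ordinal satisfying $\omega^\gamma\leq\beta<\omega^{\gamma+1}$. A routine ordinal-arithmetic check (splitting into the cases $\gamma=0$, where $\beta<\omega$ and $(\beta+1)\omega=\omega$, and $\gamma\geq 1$, where $\beta$ is infinite, so $(\beta+1)n=\beta n+1$ by induction on $n$ and hence $(\beta+1)\omega=\beta\omega=\omega^{\gamma+1}$) gives the identity $\rho(\tee)\omega=\omega^{\gamma+1}$, which underpins the rest of the argument.

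For the lower bound, I will apply the second assertion of Theorem~\ref{countuniv}: because $\tee$ is blossomed and $\rho(\tee)=\beta+1>\omega^\gamma$, the operator $\Sigma_{\tee^\star}$ is universal for $\complement\szlenkop_\gamma$, and in particular $\Sigma_{\tee^\star}\in\complement\szlenkop_\gamma$. Thus $Sz(\Sigma_{\tee^\star})>\omega^\gamma$; combined with Lancien's fact that $Sz(\Sigma_{\tee^\star})$ is a power of $\omega$, this forces $Sz(\Sigma_{\tee^\star})\geq\omega^{\gamma+1}=\rho(\tee)\omega$.

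For the matching upper bound, I will invoke Brooker's formula $Sz(C(\omega^{\omega^\gamma}+1))=\omega^{\gamma+1}$, which exhibits $Id_{C(\omega^{\omega^\gamma}+1)}$ as an operator in $\complement\szlenkop_\gamma$ that lies in $\szlenkop_{\gamma+1}$. By the universality of $\Sigma_{\tee^\star}$ for $\complement\szlenkop_\gamma$ established above, $\Sigma_{\tee^\star}$ factors through $Id_{C(\omega^{\omega^\gamma}+1)}$; since $\szlenkop_{\gamma+1}$ is an operator ideal, and hence closed under composition with arbitrary operators on either side, this factorisation puts $\Sigma_{\tee^\star}$ in $\szlenkop_{\gamma+1}$, yielding $Sz(\Sigma_{\tee^\star})\leq\omega^{\gamma+1}=\rho(\tee)\omega$. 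Combining the two inequalities then gives the equality. The genuine content has been outsourced to Theorem~\ref{countuniv} (which bundles both Proposition~\ref{lbszlenk} and the universality needed for the upper bound) and to the external $C(K)$-computation from \cite{Brooker2013}; the only step internal to the argument requiring even minor care is the ordinal identity for $\rho(\tee)\omega$, and no substantive obstacle is anticipated.
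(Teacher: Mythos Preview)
Your proof is correct and follows essentially the same route as the paper's: choose the exponent $\gamma$ with $\omega^\gamma\leq\rho(\tee)<\omega^{\gamma+1}$, get the upper bound by factoring $\Sigma_{\tee^\star}$ through $C(\omega^{\omega^\gamma}+1)$ via Theorem~\ref{countuniv}, and get the lower bound from $Sz(\Sigma_{\tee^\star})>\omega^\gamma$ together with the power-of-$\omega$ form. The only cosmetic differences are that the paper applies Proposition~\ref{lbszlenk} directly for the lower bound (rather than routing through the universality clause of Theorem~\ref{countuniv}, whose proof is exactly that proposition) and cites Samuel's original computation \cite{Samuel1984} rather than \cite{Brooker2013} for $Sz(C(\omega^{\omega^\gamma}+1))$.
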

\begin{proof}
Let $\alpha<\omega_1$ be the (unique) ordinal satisfying $\omega^\alpha\leq \rho(\tee)<\omega^{\alpha+1}$. Since $Sz(C(\omega^{\omega^\alpha}+1))=\omega^{\alpha+1}>\omega^\alpha$ by Samuel's computation\autocite{Samuel1984} of $Sz(C(K))$ for countable compact Hausdorff $K$, by \cref{countuniv} we have that $\Sigma_{\tee^\star}$ factors through $C(\omega^{\omega^\alpha}+1)$, hence $Sz(\Sigma_{\tee^\star})\leq Sz(C(\omega^{\omega^\alpha}+1))=\omega^{\alpha+1}=\rho(\tee)\omega$ by the ideal property of $\szlenkop_{\alpha+1}$.

As $\tee$ is infinite and rooted we have $\rho(\tee)\geq 2$. Moreover, as noted in \cref{treesubsection}, $\rho(\tee)$ is a successor ordinal. It follows that $\rho(\tee)>\omega^\alpha$, hence $Sz(\Sigma_{\tee^\star})\geq Sz(\Sigma_{\tee^\star},\epsilon)\geq \rho(\tee)>\omega^\alpha$ by \cref{lbszlenk}. As $Sz(\Sigma_{\tee^\star})$ is a power of $\omega$, we deduce that $Sz(\Sigma_{\tee^\star})\geq\omega^{\alpha+1}=\rho(\tee)\omega$, which completes the proof.
\end{proof}

\begin{remark} \cref{Jthm} may be obtained as an immediate consequence of \cref{countuniv} and the fact that $\compactop=\szlenkop_0$ by Proposition~2.3 of \textcite{Brooker2012}. \end{remark}

The following proposition relates some of the factorisation results of the current paper to known relationships between various closed operator ideals.
\begin{proposition}\label{advent}
Let $I$ be a cofinal subset of $\omega_1$ and for each ordinal $\xi\in I$ let $\tee_\xi$ be a blossomed tree with $\rho(\tee_\xi)=\xi+1$. For $T\in\sepop$, i.e. $T$ with separable range, the following are equivalent:
\begin{itemize}
\item[{(i)}] $T$ factors $\Sigma_{\tee_\xi^\star}$ for every $\xi\in I$.
\item[{(ii)}] $T$ factors $\Sigma_\Omega$, where $\Omega$ is the full countably branching tree of \cref{fullcounter}.
\item[{(iii)}] $T$ factors $\Sigma_\tee$ for every countable tree $\tee$ with $ht(\tee)\leq\omega$.
\item[{(iv)}] $T \in \complement \asplundop$.
\item[{(v)}] $T\in\complement \sepop^\ast$.
\item[{(vi)}] $T\in\complement \szlenkop_{\omega_1}$.
\item[{(vii)}] $T\in\complement \bigcup_{\alpha\in\ord}\szlenkop_\alpha$.
\item[{(viii)}] $T\in\complement \bigcup_{\alpha<\omega_1}\szlenkop_\alpha$.
\end{itemize}
\end{proposition}

The proof of \cref{advent} relies on the following result of the author\autocite[][Theorem~5.5]{Brooker2017}.

\begin{theorem}\label{puniuni}
Let $X$ and $Y$ be Banach spaces and $T\in\allop(X,Y)$. Suppose that at least one of $X$ and $Y$ is separable and that $T\notin\sepop^\ast(X,Y)$. Then $\Sigma_\Omega$ factors through $T$.
\end{theorem}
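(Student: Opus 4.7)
The plan is to apply Proposition \ref{factorkar} by constructing families $(x_t)_{t\in\Omega}\subseteq B_X$ and $(x_t^\ast)_{t\in\Omega}\subseteq T^\ast B_{Y^\ast}$ satisfying (\ref{deltadelta}). First I reduce to the case where both $X$ and $Y$ are separable: since $T\notin\sepop^\ast$ and at least one of $X,Y$ is separable, we have $T\in\sepop$, so Proposition \ref{sepsepseppy} yields $T\notin\asplundop$; hence some separable $Z\subseteq X$ has $(T|_Z)^\ast(Y^\ast)$ non-separable. Replacing $X$ by $Z$ and $Y$ by the (separable) closure of $T(Z)$ preserves this, so I may assume $X$ and $Y$ are both separable with $T\in\sepop\setminus\sepop^\ast=\sepop\setminus\bigcup_{\alpha<\omega_1}\szlenkop_\alpha$ by Proposition \ref{sepsepseppy}. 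Consequently $Sz(T)$ is undefined, so some $\epsilon>0$ satisfies $s_\epsilon^\xi(K)\neq\emptyset$ for every ordinal $\xi$, where $K:=T^\ast B_{Y^\ast}$.

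Fix a small $\nu\in(0,1)$, set $K_k:=(1+\nu\sum_{j=1}^k 2^{-j})K$, and apply Lemma \ref{obstaclecentre} for arbitrarily large $\zeta$ to obtain $0\in s_{\epsilon/4}^\xi(K_k)$ for every ordinal $\xi$. Hence the stable kernel
\[ L_k := \bigcap_{\xi\in\ord}s_{\epsilon/4}^\xi(K_k) \]
is a non-empty weak$^\ast$-compact set containing $0$; since the decreasing chain $(s_{\epsilon/4}^\xi(K_k))_\xi$ must stabilize, $s_{\epsilon/4}(L_k)=L_k$, and every weak$^\ast$-neighborhood of any $p\in L_k$ meets $L_k$ at norm-distance exceeding $\epsilon/8$ from $p$. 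Moreover Lemma \ref{turnbeak} delivers $L_k+\nu 2^{-(k+1)}K\subseteq L_{k+1}$, which will propagate the inductive invariant $f_{t_k}^\ast\in L_k$.

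Enumerate $\Omega$ as $(t_k)_{k\geq 1}$ with $t_i\preceq t_j\Rightarrow i\leq j$ (so $t_1=\emptyset$), adjoin a virtual predecessor $t_0$, and set $f_{t_0}^\ast:=0\in L_0$. At step $k+1$, imitate the inductive step of Theorem \ref{oopsblah}(i): form a weak$^\ast$-neighborhood $\yoo$ of $f_{t_{k+1}^-}^\ast$ as in Lemma \ref{zippinbase}, built from the previously constructed $x_{t_i}$'s together with auxiliary $\tfrac{\nu}{4+2\nu}$-nets needed to invoke Lemma \ref{lowerlemma}; select $u^\ast\in\yoo\cap L_k$ with $\Vert u^\ast - f_{t_{k+1}^-}^\ast\Vert>\epsilon/8$, available because $f_{t_{k+1}^-}^\ast\in L_k$ and $s_{\epsilon/4}(L_k)=L_k$; perform verbatim the correction formula of Theorem \ref{oopsblah} to obtain $f_{t_{k+1}}^\ast$ enforcing the biorthogonality, where the estimate (\ref{kimmingswap}) bounds the correction inside $\nu 2^{-(k+1)}K$ so that Lemma \ref{turnbeak} places $f_{t_{k+1}}^\ast$ in $L_{k+1}$; finally take $x_{t_{k+1}}\in S_X$ via Lemma \ref{lowerlemma}, annihilating the previously chosen $f_{t_i}^\ast$'s and paired with $f_{t_{k+1}}^\ast$ at a value bounded below by a positive multiple of $\epsilon$. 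Setting $x_t^\ast:=(1+\nu)^{-1}f_t^\ast\in K$ and invoking Proposition \ref{factorkar} then produces the desired factorization.

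The main obstacle is the ill-foundedness of $\Omega$: in Theorem \ref{oopsblah} the rank $\rho_{\tee_n}(t)$ supplies ordinal bookkeeping certifying that enough derivative-depth remains at each node, but $\Omega$ has infinite ascending branches, so no well-founded rank function exists on it. The remedy is to replace the ordinal-indexed bookkeeping by the qualitative assertion that $L_k$ is $s_{\epsilon/4}$-invariant and therefore never exhausts; this is precisely why the reduction step must produce a single $\epsilon>0$ with $Sz(T,\epsilon)=\infty$, so that Lancien's lemma places $0$ in every derivative of $K_k$. Once this structural substitution is made, the weak$^\ast$-neighborhood selection, the correction formula, and the lower estimate from Theorem \ref{oopsblah} all transfer without change.
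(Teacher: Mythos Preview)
The paper does not contain a proof of Theorem~\ref{puniuni}; it is quoted from the external reference \cite{PAHBnonsepdual} (listed as ``in preparation'') and used as a black box in the proof of Proposition~\ref{advent}. There is therefore no proof in the paper to compare your proposal against.

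That said, your argument is sound. The reduction to the case of separable $X$ and $Y$ is correct (you should perhaps state explicitly that the factorisation lifts back to the original $T$ via the injectivity of $\ell_\infty(\Omega)$, exactly as in the paper's proof of (v)$\Rightarrow$(ii) in Proposition~\ref{advent}). The key idea---replacing the ordinal bookkeeping $f_{\tau(m)}^\ast\in s_{\epsilon/2}^{\rho(t_m)}(\cdot)$ from Theorem~\ref{oopsblah} by membership in the $s_{\epsilon/4}$-stable kernel $L_k$---is exactly what is needed to handle the ill-founded tree $\Omega$: once $Sz(T,\epsilon)=\infty$, the derivation stabilises at a nonempty set $L_k$ with $s_{\epsilon/4}(L_k)=L_k$, and this invariance furnishes the point $u^\ast$ at each inductive step without any rank hypothesis. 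The inclusion $L_k+\nu 2^{-(k+1)}K\subseteq L_{k+1}$ follows from Lemma~\ref{turnbeak} and the absolute convexity of $K$, so the correction term is absorbed and the invariant $f_{t_k}^\ast\in L_k$ propagates. After that, the neighbourhood selection, the correction formula, and the use of Lemma~\ref{lowerlemma} carry over verbatim from the proof of Theorem~\ref{oopsblah}(i), and Proposition~\ref{factorkar} finishes the job.
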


\begin{proof}[Proof of \cref{advent}]
The equivalence of assertions (iv)-(viii) follows immediately from \cref{sepsepseppy}. To complete the proof it thus suffices to show that (v)$\Rightarrow$(ii)$\Rightarrow$(iii)$\Rightarrow$(i)$\Rightarrow$(viii).

To see that (v)$\Rightarrow$(ii), let $X$ and $Y$ be Banach spaces and $T\in\sepop(X,Y)\setminus \sepop^\ast(X,Y)$. Let $\tilde{T}: X\longrightarrow \overline{T(X)}$ be the separable codomain operator given by setting $\tilde{T}x=Tx$ for all $x\in X$. Since $\tilde{T}\notin\sepop^\ast$, by \cref{puniuni} there exist $U\in\allop (\ell_1(\Omega), X)$ and $V\in\allop(\overline{T(X)}, \ell_\infty(\Omega))$ such that $V\tilde{T}U=\Sigma_\Omega$. Since $\ell_\infty(\Omega)$ is injective\autocite[See][p.105]{Lindenstrauss1977} $V$ admits a continuous linear extension $\tilde{V}\in\allop(Y,\ell_\infty(\Omega))$, and for such $\tilde{V}$ we have $\tilde{V}TU=\Sigma_\Omega$. Thus (v)$\Rightarrow$(ii).

The implication (ii)$\Rightarrow$(iii) follows from \cref{subtreefac} and the fact that every countable tree $\tee$ with $ht(\tee)\leq\omega$ is order isomorphic to a subtree of $\Omega$, whilst (iii)$\Rightarrow$(i) is immediate from the fact that blossomed trees are by definition countable and well-founded. 

Finally, the implication (i)$\Rightarrow$(viii) is a consequence of \cref{lbszlenk}.
\end{proof}

\section{When the codomain is separable}\label{kneesbees}

It is evident from the definition of the operator $\Sigma_\tee$ associated to a tree $(\tee,\preceq)$ that the range of $\Sigma_\tee$ is contained in the closed linear span in $\ell_\infty(\tee)$ of the indicator functions $\chi_{\tee[t\preceq]}$, for $t\in\tee$. For example, the range of the universal non-compact operator $\ell_1\hookrightarrow\ell_\infty$ of Johnson (c.f. \cref{Jthm}) is contained in the subspace $c_0$ of $ \ell_\infty$, whilst the range of the Lindenstrauss-Pe{\l}czy{\'n}ski universal non-weakly compact summation operator from $\ell_1$ to $\ell_\infty$ (c.f. \cref{LPthm}) is contained in the subspace $c$ of $\ell_\infty$ consisting of all convergent scalar sequences. In both these cases, the range is contained (up to isometric isomorphism) in a separable $C(K)$ space. In the corresponding papers of Johnson\autocite{Johnson1971a} and Lindenstrauss-Pe{\l}czy{\'n}ski\autocite{Lindenstrauss1968} it is noted that stronger versions of the universal operator theorems presented there hold under restriction to the class of operators having separable codomain. More precisely, it is noted in \textcite{Johnson1971a} that if $T:X\longrightarrow Y$ is noncompact and $Y$ is separable, then $T$ factors the formal identity operator from $\ell_1$ to $c_0$. Moreover, in \textcite{Lindenstrauss1968} it is noted that if $T:X\longrightarrow Y$ is non-weakly compact and $Y$ is separable, then $T$ factors the summation operator from $\ell_1$ to $c$ defined by $(a_n)_{n=1}^\infty\mapsto (\sum_{i=1}^na_i)_{i=1}^n$. In a similar vein, in the current section we show that for every $\alpha<\omega_1$ there exists an operator $\Upsilon_\alpha$ from $\ell_1$ into a separable $C(K)$ space with $Sz(\Upsilon_\alpha)>\omega^\alpha$ and such that $\Upsilon_\alpha$ factors through any $T:X\longrightarrow Y$ with $Y$ separable and $Sz(T)\nleq\omega^\alpha$.

Let $(\tee,\preceq)$ be a rooted and chain-complete tree and let $t_\emptyset$ denote the root of $\tee$. By \cref{coarsecompact} the coarse wedge topology of $\tee$ is compact Hausdorff, thus for such $\tee$ we shall denote by $C(\tee)$ the Banach space (with the supremum norm) of coarse-wedge-continuous scalar-valued functions on $\tee$. We denote by $C_0(\tee)$ the codimension-$1$ subspace $\{ f\in C(\tee)\mid f(t_\emptyset)=0\}$ of $C(\tee)$. Note that if $\tee$ is countable then $C(\tee)$ and $C_0(\tee)$ are norm separable.

The following definition establishes the class of operators from which we shall draw our examples of universal non-$\alpha$-Szlenk operators with separable codomain. We note that although the definition can be adapted to trees of arbtrarily large height, such generality is unnecessary for our purposes. 

\begin{definition}\label{trootedree}
Let $(\tee,\preceq)$ be a rooted, well-founded tree. Define $\sigma_\tee: \ell_1(\tee)\longrightarrow C(\tee)$ by
\begin{equation*}
(\sigma_\tee x)(t)=
\sum_{s\preceq t}x(s), \quad x\in\ell_1(\tee), \,t\in\tee.
\end{equation*}
That is, $\sigma_\tee$ is the unique element of $\allop(\ell_1(\tee),C(\tee))$ that maps each $e_t\in\ell_1(\tee)$ to $\chi_{\tee[t\preceq]}\in C(\tee)$. Similarly, define $\mathring{\sigma}_\tee$ to be the unique element of $\allop( \ell_1(\tee^\star), C_0(\tee))$ that maps each $e_t\in\ell_1(\tee^\star)$ to $\chi_{\tee[t\preceq]}\in C_0(\tee)$.
\end{definition}

Notice that \cref{lbszlenk} holds true with $\mathring{\sigma}_\tee$ in place of $\Sigma_{\tee^\star}$. Indeed, since $C_0(\tee)$ naturally embeds linearly and isometrically into $ \ell_\infty(\tee^\star)$ via the restriction map $R\in\allop(C_0(\tee),\ell_\infty(\tee^\star))$, defined by setting $R(f)=f|_{\tee^\star}$ for each $f\in C_0(\tee)$, and since $\Sigma_{\tee^\star}=R\mathring{\sigma}_\tee$, we have $\mathring{\sigma}_\tee^\ast (B_{C_0(\tee)^\ast})= \Sigma_{\tee^\star}^\ast (B_{\ell_\infty(\tee^\star)^\ast})$. We thus deduce that $Sz(\mathring{\sigma}_\tee)=Sz(\Sigma_{\tee^\star})$ since the Szlenk indices of $\mathring{\sigma}_\tee$ and $\Sigma_{\tee^\star}$ are determined by the same subset of $\ell_1(\tee^\star)^\ast$.

Similarly to the comments in \cref{softerbristles} regarding \cref{countuniv}, we note that although $\mathring{\sigma}_\tee$ may be replaced by $\sigma_\tee$ in the statement of \cref{sepseekay}, we present \cref{sepseekay} as stated since $\mathring{\sigma}_\tee$ may be viewed as being naturally `smaller' than $\sigma_\tee$.

A smallness condition of some kind on $Y$ is necessary for the first assertion of \cref{sepseekay} to hold in general. To see this, by \cref{spoindex} it is enough to observe that for a countably infinite, rooted, well-founded tree $(\tee,\preceq)$, $\mathring{\sigma}_\tee$ does not factor through $\Sigma_{\tee^\star}$. Firstly, the fact that such $\tee$ is infinite and well-founded implies that $\MAX(\tee)$ contains an infinite anti-chain $\{ t_n\mid n<\omega\}$, so that $\mathring{\sigma}_\tee$ is non-compact since the set $\{ \mathring{\sigma}_\tee e_{t_n}\mid n<\omega\}$ is an infinite $\epsilon$-separated subset of $\mathring{\sigma}_\tee (B_{\ell_1(\tee^\star)})$ for any $\epsilon\in(0,1)$. Secondly, the norm separability of $C_0(\tee)$ and the fact that $\ell_\infty(\tee^\star)$ is a Grothendieck space implies that\autocite{Grothendieck1953} $\allop(\ell_\infty(\tee^\star), C_0(\tee))= \wcompactop(\ell_\infty(\tee^\star),C_0(\tee))$. Thirdly, the fact that $\tee$ is well-founded implies that $\Sigma_{\tee^\star}$ is weakly compact by \cref{otherclasses}, hence for any $V\in\allop(\ell_\infty(\tee^\star), C_0(\tee))= \wcompactop(\ell_\infty(\tee^\star),C_0(\tee))$ we have that $V\Sigma_{\tee^\star}$ is compact since $\ell_\infty(\tee^\star)$ is isomorphic to a $C(K)$ space and therefore has the Dunford-Pettis Property\autocite{Grothendieck1953}\autocite[See also][Theorem~5.4.6]{Albiac2016}. Finally, since $V\Sigma_{\tee^\star}$ therefore cannot factor $\mathring{\sigma}_\tee$ for any $V\in\allop(\ell_\infty(\tee^\star), C_0(\tee))$, we conclude that $\Sigma_{\tee^\star}$ does not factor $\mathring{\sigma}_\tee$.

To prove \cref{sepseekay} we first establish the following continuous analogue of \cref{factorkar}.

\begin{proposition}\label{kaktorfar}
Let $K$ be a compact Hausdorff space, $I$ an index set and $\{ K_i\}_{i\in I}$ a family of clopen subsets of $K$. For Banach spaces $X$ and $Y$ and $T\in\allop(X,Y)$ the following are equivalent:
\begin{itemize}
\item[(i)] $T$ factors the unique element of $\allop(\ell_1(I), C(K))$ satisfying $e_i\mapsto \chi_{K_i}$, $i\in I$.
\item[(ii)] There exists $(\delta_i)_{i\in I}\subseteq \real$ with $\inf_{i\in I}\delta_i>0$, a family $(x_i)_{i\in I}\subseteq X$ with $\sup_{i\in I}\Vert x_i\Vert <\infty$ and a weak$^\ast$-continuous map $\Xi: K\longrightarrow Y^\ast$ such that
\[
\forall\, i\in I\quad \forall\, k\in K \quad \langle \Xi (k), Tx_i\rangle = \begin{cases}
\delta_i, & k\in K_i\\
0, &k\notin K_i
\end{cases}.
\]
\end{itemize}
\end{proposition}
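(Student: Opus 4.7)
The statement is the natural $C(K)$-analogue of Proposition~\ref{factorkar}, and the plan is to mimic that proof, with evaluation functionals on $K$ replacing the coordinate functionals $f_t^\ast\in\ell_\infty(\tee)^\ast$ used there. Write $\Phi$ for the unique operator in $\allop(\ell_1(I),C(K))$ with $\Phi e_i=\chi_{K_i}$, and for $k\in K$ let $\varepsilon_k^\ast\in C(K)^\ast$ denote the evaluation functional $f\longmapsto f(k)$.

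For (i)$\Rightarrow$(ii), I would suppose $VTU=\Phi$ with $U\in\allop(\ell_1(I),X)$ and $V\in\allop(Y,C(K))$. The map $k\longmapsto\varepsilon_k^\ast$ is weak$^\ast$-continuous, hence so too is $\Xi:K\longrightarrow Y^\ast$ defined by $\Xi(k):=\Vert V\Vert^{-1}V^\ast\varepsilon_k^\ast$. I would then set $x_i:=\Vert Ue_i\Vert^{-1}Ue_i\in B_X$ and $\delta_i:=(\Vert V\Vert\Vert Ue_i\Vert)^{-1}$ whenever $Ue_i\neq 0$ (with a trivial choice otherwise, since then $\Phi e_i=VTUe_i=0$ forces $K_i=\emptyset$). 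A direct calculation using $VTUe_i=\chi_{K_i}$ yields $\langle\Xi(k),Tx_i\rangle=\delta_i\chi_{K_i}(k)$, and $\inf_{i\in I}\delta_i\geq(\Vert U\Vert\Vert V\Vert)^{-1}>0$, as required.

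For (ii)$\Rightarrow$(i), I would define $U\in\allop(\ell_1(I),X)$ as the unique operator with $Ue_i=\delta_i^{-1}x_i$; this is well defined and bounded by the hypotheses $\inf_{i\in I}\delta_i>0$ and $\sup_{i\in I}\Vert x_i\Vert<\infty$. I would then define $V:Y\longrightarrow C(K)$ by $(Vy)(k):=\langle\Xi(k),y\rangle$, noting that $Vy$ lies in $C(K)$ precisely because $\Xi$ is weak$^\ast$-continuous, and that $V$ is bounded because the weak$^\ast$-compact set $\Xi(K)\subseteq Y^\ast$ is norm-bounded by the uniform boundedness principle. Evaluating $VTU$ on $e_i$ gives $(VTUe_i)(k)=\delta_i^{-1}\langle\Xi(k),Tx_i\rangle=\chi_{K_i}(k)$, so $VTU=\Phi$.

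No step is genuinely difficult; the content of the argument is the recognition that weak$^\ast$-continuity of $\Xi$ is exactly the regularity needed to package a family $\{\Xi(k)\}_{k\in K}\subseteq Y^\ast$ into a bounded operator into $C(K)$ (rather than merely into $\ell_\infty(K)$), and that the uniform positive lower bound on $(\delta_i)$ is exactly what is required to package $(\delta_i^{-1}x_i)_{i\in I}$ into a bounded operator out of $\ell_1(I)$.
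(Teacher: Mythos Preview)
Your argument is correct and follows essentially the same route as the paper's proof: evaluation functionals pushed through $V^\ast$ to build $\Xi$, and the same $U$ and $V$ for the converse. The only difference is cosmetic: in (i)$\Rightarrow$(ii) the paper sets $x_i=Ue_i$ unnormalised and $\delta_i=1$ for all $i$, which avoids both the division by $\Vert V\Vert$ and the special case $Ue_i=0$ that you flag (your ``trivial choice'' there should be $x_i=0$, since for arbitrary $x_i$ one does not have $\langle\Xi(k),Tx_i\rangle=0$ for all $k$).
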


\begin{proof}
For each $k\in K$ let $g_k^\ast\in C(K)^\ast$ denote the evaluation functional of $C(K)$ at $k$; that is, $g_k^\ast(f)=f(k)$ for each $f\in C(K)$.

Suppose (i) holds. Let $S$ denote the unique element of $\allop(\ell_1(I), C(K))$ satisfying $e_i\mapsto \chi_{K_i}$, $i\in I$, and let $U\in\allop(\ell_1(I),X)$ and $V\in\allop(Y,C(K))$ be such that $S=VTU$. The map $k\mapsto g_k^\ast$ is a homeomorphic embedding of $K$ into $C(K)^\ast$ with respect to the weak$^\ast$-topology of $C(K)^\ast$, hence the map $\Xi: K\longrightarrow Y^\ast$ defined by setting $\Xi(k) = V^\ast g_k^\ast$ for each $k\in K$ is weak$^\ast$-continuous. For each $i\in I$ set $x_i= Ue_i$, so that $\sup_{i\in I}\Vert x_i\Vert\leq \Vert U\Vert <\infty$. Then for $i\in I$ and $k\in K$ we have
\[
\langle \Xi (k), Tx_i\rangle = \langle V^\ast g_k^\ast, TUe_i\rangle= \langle g_k^\ast, VTUe_i\rangle = \langle g_k^\ast, Se_i\rangle =\langle g_k^\ast, \chi_{K_i}\rangle =\begin{cases}
1, & k\in K_i\\
0, &k\notin K_i
\end{cases}.
\]
By taking $\delta_i=1$ for each $i\in I$ we see that (ii) holds, as desired.

Now suppose (ii) holds. Let $U\in\allop(\ell_1(I), X)$ be defined by setting $Ue_i=\delta_i^{-1}x_i$ for each $i\in I$, noting that $U$ is well-defined with $\Vert U\Vert \leq (\inf_{i\in I}\delta_i)^{-1} \sup_{i\in I}\Vert x_i\Vert$. Let $V$ be the element of $\allop(Y, C(K))$ satisfying $(Vy)(k) = \langle \Xi(k), y\rangle$ for $y\in Y$ and $k\in K$, noting that $V$ is well-defined with $\Vert V\Vert =\sup_{k\in K}\Vert \Xi(k)\Vert <\infty$. For $i\in I$ and $k\in K$ we have
\[
(VTUe_i)(k) = \langle g_k^\ast, VTUe_i\rangle = \delta_i^{-1}\langle V^\ast g_k^\ast, Tx_i\rangle = \delta_i^{-1}\langle \Xi (k), Tx_i\rangle = \begin{cases}
1, & k\in K_i\\
0, &k\notin K_i
\end{cases},
\]
hence $VTUe_i=Se_i$ for every $i\in I$, hence $VTU=S$.
\end{proof}

The following lemma is another key ingredient required for the proof of \cref{sepseekay}.

\begin{lemma}\label{seasonover}
Let $X$ and $Y$ be Banach spaces such that $Y$ is separable. Let $T\in\allop(X,Y)$, $\delta>0$, $(\arr,\preceq')$ a blossomed tree and $(x_r)_{r\in\arr}\subseteq S_X$ and $(x_r^\ast)_{r\in\arr}\subseteq T^\ast (B_{Y^\ast})$ families such that
\begin{equation}\label{quiorquogger}
\langle x_s^\ast ,x_r\rangle=\begin{cases}
\langle x_r^\ast,x_r\rangle>\delta&\mbox{if }r\preceq' s\\ 0 &\mbox{if }r\npreceq' s
\end{cases}, \qquad r,s\in\arr.
\end{equation}
Then for any $\xi<\rho(\arr)$ and any $r_0\in \arr^{(\xi)}\setminus\arr^{(\xi+1)}$ there exists a full subtree $\ess$ of $\arr[r_0\preceq']$ and a family $(y_s^\ast)_{s\in\ess}\subseteq B_{Y^\ast}$ such that
\begin{equation}\label{alldaybaby}
\langle y_s^\ast,Tx_r\rangle = \langle x_s^\ast,x_r\rangle,\qquad s\in\ess,\,r\in\arr
\end{equation}
and the map $s\mapsto y_s^\ast $ from $\ess$ to $Y^\ast$ is coarse-wedge-to-weak$^\ast$ continuous.
\end{lemma}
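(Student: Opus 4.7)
The plan is to construct the full subtree $\ess$ and the family $(y_s^\ast)_{s \in \ess}$ by transfinite recursion on the rank $\rho_\arr(r)$, working upward from maximal nodes. The only use of separability of $Y$ is that $B_{Y^\ast}$ is weak$^\ast$-metrizable and (hence) sequentially compact.

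After fixing a weak$^\ast$-compatible metric $d$ on $B_{Y^\ast}$ and choosing, for each $r \in \arr$, some $\tilde{y}_r^\ast \in B_{Y^\ast}$ with $T^\ast \tilde{y}_r^\ast = x_r^\ast$, I would prove by induction on $\rho_\arr(r)$ the following stronger statement: for every $r \in \arr[r_0 \preceq']$ there exist a full subtree $\ess_r$ of $\arr[r \preceq']$ rooted at $r$ and a family $(y_s^\ast)_{s \in \ess_r} \subseteq B_{Y^\ast}$ satisfying (\ref{alldaybaby}) and such that $s \mapsto y_s^\ast$ is coarse-wedge-to-weak$^\ast$ continuous on $\ess_r$. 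The desired $\ess$ is then $\ess_{r_0}$. The base case $r \in \MAX(\arr)$ is immediate with $\ess_r = \{r\}$ and $y_r^\ast = \tilde{y}_r^\ast$.

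For the inductive step at a non-maximal $r$, enumerate $\arr[r+] = \{r_m : m < \omega\}$ using the blossomed structure of $\arr$, and invoke the IH to obtain $\ess_{r_m}$ and $(y_s^\ast)_{s \in \ess_{r_m}}$ for each $m$. By weak$^\ast$ sequential compactness, extract a strictly increasing sequence $(m_k)_{k < \omega}$ and $y_r^\ast \in B_{Y^\ast}$ with $y_{r_{m_k}}^\ast \to y_r^\ast$ weak$^\ast$. Fix $\epsilon_k \downarrow 0$. By IH continuity at $r_{m_k}$ and Proposition~\ref{coarsefacts}, there is a finite $F_k \subseteq \ess_{r_{m_k}}[r_{m_k}+]$ with $d(y_s^\ast, y_{r_{m_k}}^\ast) < \epsilon_k$ for all $s \in \dubyoo_{\ess_{r_{m_k}}}(r_{m_k}, F_k)$. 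Let $\ess_{r_{m_k}}'$ be the full subtree of $\ess_{r_{m_k}}$ obtained by deleting the finitely many child-subtrees $\ess_{r_{m_k}}[f \preceq']$ for $f \in F_k$, and set $\ess_r := \{r\} \cup \bigcup_k \ess_{r_{m_k}}'$, which is a full subtree of $\arr[r \preceq']$ (at $r$ the countably infinite branch-set $\{r_{m_k}\}$ matches $|\arr[r+]| = \aleph_0$; at other nodes, fullness is inherited).

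Three verifications remain. Biorthogonality (\ref{alldaybaby}) at $r$ reduces to $\langle y_r^\ast, Tx_{r'}\rangle = \lim_k \langle x_{r_{m_k}}^\ast, x_{r'}\rangle$, where (\ref{quiorquogger}) forces the right value: if $r' \preceq' r$ every term equals $\langle x_{r'}^\ast, x_{r'}\rangle = \langle x_r^\ast, x_{r'}\rangle$; if $r' \npreceq' r$ then $r' \preceq' r_{m_k}$ forces $r' = r_{m_k}$ (as $r_{m_k}$ is a direct successor of $r$), occurring for at most one $k$, so the limit is $0 = \langle x_r^\ast, x_{r'}\rangle$. Continuity at $r$ is delivered by the pruning: for $\epsilon > 0$ pick $K$ with $d(y_{r_{m_k}}^\ast, y_r^\ast) + \epsilon_k < \epsilon$ for $k > K$ and take $F := \{r_{m_1}, \ldots, r_{m_K}\}$. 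Continuity at each $s \in \ess_r \setminus \{r\}$ is inherited from the IH, after noting via Proposition~\ref{closedown} that on downwards-closed subtrees the coarse wedge topology is the subspace topology, so that the local bases at $s$ in $\ess_r$ and in $\ess_{r_{m_k}}'$ coincide. The main obstacle is engineering continuity simultaneously with biorthogonality; the crucial observation is that these decouple, since biorthogonality comes free from any weak$^\ast$-accumulation point of the children's values, while continuity at the new root requires only uniform closeness of $y_s^\ast$ to $y_r^\ast$ on tails of child-subtrees, achievable by pruning finitely many children's subtrees—an operation that preserves fullness precisely because $|\arr[r+]| = \aleph_0$ for the blossomed tree $\arr$.
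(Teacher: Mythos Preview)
Your proposal is correct and follows essentially the same approach as the paper's proof: transfinite induction on rank, applying the induction hypothesis to the immediate successors, extracting a weak$^\ast$-convergent subsequence of the children's functionals to define $y_r^\ast$, and pruning each child's subtree via $\dubyoo_{\ess_{r_{m_k}}}(r_{m_k},F_k)$ to force continuity at the new root. Your treatment is slightly more explicit than the paper's in verifying (\ref{alldaybaby}) at $r$ and in noting why continuity is inherited at the non-root nodes, but the argument is the same.
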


\begin{proof}
We proceed by induction on $\xi$. For the base case, namely $\xi=0$, fix $r_0\in\arr^{(0)}\setminus\arr^{(1)}$, let $\ess=\{ r_0\}$ and choose $y_{r_0}^\ast\in B_{Y^\ast}$ such that $T^\ast y_{r_0}^\ast= x_{r_0}^\ast$. In this way we see that the assertion of the lemma is true in the case $\xi=0$.

We now address the inductive step. Suppose $0<\zeta<\rho(\arr)$ and that the assertion of the lemma holds for all $\xi<\zeta$; we will now show it is then true for $\xi=\zeta$. To this end fix $r_0\in\arr^{(\zeta)}\setminus\arr^{(\zeta+1)}$ and for each $t\in\arr[r_0+]$ let $\ess_t$ be a full subtree of $\arr[t\preceq' ]$ and $(y_s^\ast)_{s\in\ess_t}\subseteq B_{Y^\ast}$ a family such that 
\begin{equation}\label{alldayadult}
\langle y_s^\ast,Tx_r\rangle = \langle x_s^\ast,x_r\rangle,\qquad s\in\ess_t,\,r\in\arr
\end{equation}
and the map $\Xi_t: s\mapsto y_s^\ast $ from $\ess_t$ to $Y^\ast$ is coarse-wedge-to-weak$^\ast$ continuous. Let $d$ be a metric on $B_{Y^\ast}$ that is compatible with the weak$^\ast$ topology on $B_{Y^\ast}$ and let $(t_n)_{n=0}^\infty$ be an injective sequence in $\arr[r_0+]$ such that $(y_{t_n}^\ast)_{n=0}^\infty$ is weak$^\ast$ convergent. Let $y_{r_0}^\ast$ denote the weak$^\ast$-limit of $(y_{t_n}^\ast)_{n=0}^\infty$. Passing to a subsequence we may assume that $d(y_{t_n}^\ast,y_{r_0}^\ast)<1/n$ for each $n<\omega$. By \cref{coarsefacts} and the continuity of the maps $\Xi_{t_n}$, $n<\omega$, for each $n<\omega$ we may choose a finite set $\eff_n\subseteq \ess_{t_n}[t_n+]$ such that
\[
\forall\,n<\omega\quad\forall\,t\in \dubyoo_{\ess_{t_n}}(t_n,\eff_n)\quad d(\Xi_{t_n}(t),\Xi_{t_n}(t_n))<\frac{1}{n}.
\]
Define $\ess:=\{ r_0\}\cup \bigcup_{n<\omega}\dubyoo_{\ess_{t_n}}(t_n,\eff_n)$ and $\Xi:\ess\longrightarrow Y^\ast$ by
\[
\Xi(s) = \begin{cases}
y_{r_0}^\ast,& s=r_0\\
\Xi_{t_n}(s),& s\in \dubyoo_{\ess_{t_n}}(t_n,\eff_n),\, n<\omega
\end{cases}
\]
It is straightforward to check that $\ess$ is a full subtree of $\arr[r_0\preceq']$ since $\dubyoo_{\ess_{t_n}}(t_n,\eff_n)$ is a full subtree of $\ess_{t_n}$ for every $n<\omega$. To see that \cref{alldaybaby} holds for this $\ess$, note that since \cref{alldayadult} holds for $t=t_n$, for all $n<\omega$, we need only check that \cref{alldaybaby} holds in the case where $s=r_0$. To this end note that for all $r\in\arr$ we have
\[
\langle y_{r_0}^\ast,Tx_r\rangle = \lim_{n\rightarrow\omega} \langle y_{t_n}^\ast,Tx_r\rangle =  \lim_{n\rightarrow\omega}\langle x_{t_n}^\ast,x_r\rangle= \langle x_{r_0}^\ast,x_r\rangle,
\]
where the final equality follows from \cref{quiorquogger}. Thus \cref{alldaybaby} holds for all $s\in \ess$ and $r\in\arr$.

To complete the proof it remains only to establish the continuity of $\Xi$. Since each $\Xi_{t_n}$ is continuous, for $n<\omega$, the only nontrivial case to check is whether $\Xi$ is continuous at $r_0$. Fix $\lambda>0$. Let $N<\omega$ be large enough that $N\lambda >2$ and let $\eff=\{ t_0,\ldots,t_{N-1}\}$. For each $s\in \dubyoo_{\ess}(r_0,\eff)\setminus \{ r_0\}$ there exists a unique $n_s\geq N$ such that $t_{n_s}\preceq' s$. So for $s\in \dubyoo_{\ess}(r_0,\eff)\setminus \{ r_0\}$ we have
\[
d(\Xi(s),\Xi(r_0))\leq d(\Xi(s),\Xi(t_{n_s})) + d(\Xi(t_{n_s}),\Xi(r_0))<\frac{1}{n_s}+\frac{1}{n_s}\leq \frac{2}{N}<\lambda.
\]
It follows that $d(\Xi(s),\Xi(r_0))<\lambda$ for all $s\in \dubyoo_{\ess}(r_0,\eff)$, hence $\Xi$ is continuous at $r_0$ since $\dubyoo_{\ess}(r_0,\eff)$ is open in $\ess$ by \cref{coarsefacts}.
\end{proof}

\begin{proof}[Proof of \cref{sepseekay}]
Suppose $Y$ is separable. Let $(\arr,\preceq')$ be a blossomed tree with $\rho(\arr) = \rho(\tee)$ (c.f. \cref{blossexist}), let $\epsilon'>0$ be small enough that $s_{\epsilon'}^{\omega^\alpha}(T^\ast(B_{Y^\ast}))\neq\emptyset$, and let $N<\omega$ be large enough $\rho(\tee)\leq\omega^\alpha2^N+1$. Set $\epsilon=2^{-N-1}\epsilon'$, so that $s_\epsilon^{\rho(\tee)-1}(T^\ast(B_{Y^\ast}))\neq\emptyset$ by \cref{dingraniel}. \cref{birthdaybun} yields families $(x_s)_{s\in\arr}\subseteq S_X$ and $(x_s^\ast)_{s\in\arr}\subseteq T^\ast (B_{Y^\ast})$ such that 
\begin{equation}\label{biorthoggy}
\langle x_s^\ast ,x_r\rangle=\begin{cases}
\langle x_r^\ast,x_r\rangle>\frac{\epsilon}{17}&\mbox{if }r\preceq' s\\ 0 &\mbox{if }r\npreceq' s
\end{cases}, \quad r,s\in \arr\,.
\end{equation}
We apply \cref{seasonover} with $r_0$ the root of $\arr$ and $\xi=\rho(\tee)-1$ to obtain a full subtree $\ess$ of $\arr$ and a family $(y_s^\ast)_{s\in\ess}\subseteq B_{Y^\ast}$ such that
\begin{equation}\label{porridge}
\langle y_s^\ast,Tx_r\rangle = \langle x_s^\ast,x_r\rangle,\qquad s,r\in\ess,
\end{equation}
and the map $\Xi: s\mapsto y_s^\ast$ from $\ess$ to $Y^\ast$ is coarse-wedge-to-weak$^\ast$ continuous. From \cref{biorthoggy} and \cref{porridge} we deduce that
\begin{equation}
\langle y_s^\ast ,Tx_r\rangle=\begin{cases}
\langle y_r^\ast,Tx_r\rangle>\frac{\epsilon}{17}&\mbox{if }r\preceq' s\\ 0 &\mbox{if }r\npreceq' s
\end{cases}, \qquad r,s\in\ess.
\end{equation}
By an application of \cref{kaktorfar} with $K=\ess$, index set $I=\ess$, clopen sets $K_s=\ess[s\preceq']$ for $s\in\ess$, and $\delta_s=\epsilon/17$ for all $s\in\ess$, we obtain that $\sigma_\ess$ factors through $T$. So to prove the first assertion of \cref{sepseekay} it now suffices to show that $\mathring{\sigma}_\tee$ factors through $\sigma_\ess$. To this end we now define three operators, $S$, $R$, and $P$, so that $\mathring{\sigma}_\tee =PS\sigma_\ess R$. Let $\phi:\tee\longrightarrow\ess$ be an order-isomorphism of $\tee$ onto a downward-closed subtree of $\ess$, noting that such an embedding exists by \cref{blossrels}. Since $\phi$ is coarse wedge continuous by \cref{closedown}(ii), the operator $S\in\allop(C(\ess),C(\tee))$ given by setting $Sf=f\circ\phi$ for each $f\in C(\ess)$ is well-defined. Let $R\in\allop(\ell_1(\tee^\star),\ell_1(\ess))$ be the operator defined by setting $Re_t=e_{\phi(t)}^\ess$ for every $t\in\tee^\star$. Let $t_\emptyset$ denote the root of $\tee$ and define $P\in\allop(C(\tee),C_0(\tee))$ by setting $Pf=f-f(t_\emptyset)\chi_\tee$ for each $f\in C(\tee)$. Since for $t\in\tee^\star$ we have
\[
PS\sigma_\ess Re_t = PS\sigma_\ess e_{\phi(t)}^\ess= PS\chi_{\ess[\phi(t)\preceq']} = P\chi_{\tee[t\preceq]}=\chi_{\tee[t\preceq]}=\mathring{\sigma}_\tee e_t,
\]
we conclude that $\mathring{\sigma}_\tee=PS\sigma_\ess R$. The first assertion of the theorem is proved.

For the second assertion of the theorem, we now suppose that $\tee$ is blossomed and $\rho(\tee)\geq\omega^\alpha$. As $\tee$ is infinite and rooted, we have that $\rho(\tee)\geq2$ and $\rho(\tee)$ is a successor ordinal, hence $\rho(\tee)>\omega^\alpha$. Since $\tee$ is blossomed, an application of \cref{lbszlenk} yields $Sz(\Sigma_{\tee^\star})\geq\rho(\tee)>\omega^\alpha$. Moreover, as noted in the paragraph following \vref{trootedree}, $Sz(\Sigma_{\tee^\star})$ coincides with $Sz(\mathring{\sigma}_\tee)$, hence $\mathring{\sigma}_\tee$ is non-$\alpha$-Szlenk. Note also that the codomain of $\mathring{\sigma}_\tee$, namely $C_0(\tee)$, is norm separable since $\tee$ is countable. On the other hand, by the first assertion of the theorem we have that $\mathring{\sigma}_\tee$ factors through any non-$\alpha$-Szlenk operator with separable codomain, hence we conclude that $\mathring{\sigma}_\tee$ is in this case universal for the class of non-$\alpha$-Szlenk operators with separable codomain.
\end{proof}

\begin{remark}
Bourgain, in a study\autocite{Bourgain1979} of fixing properties of operators of large Szlenk index acting on $C(K)$ spaces, represented $C(L)$ spaces with $L$ countable, compact and Hausdorff as spaces of scalar-valued functions on blossomed trees. Bourgain associates to each tree $(\tee_\xi,\sqsubseteq)$ constructed in \cref{blossexist} above (subtrees of the tree $(\Omega,\sqsubseteq)$ constructed in \cref{fullcounter}) a Banach space $X_\xi$, isometrically isomorphic to $C_0(\tee_\xi)$, defined as the completion of $c_{00}(\tee_\xi^\star)$ (the space of finitely-supported scalar-valued functions on $\tee_\xi^\star$) with respect to the norm $\Vert \cdot\Vert_\xi$ defined by setting
\[
\Vert x\Vert_\xi = \sup_{t\in\tee_\xi^\star}\Big\vert \sum_{s\sqsubseteq t}x(s)\Big\vert,\qquad x\in c_{00}(\tee_\xi^\star).
\]
The main assertion of \cref{sepseekay} may be recast as follows: \emph{Suppose $\alpha,\xi<\omega_1$ are such that $\omega^\alpha\leq\xi<\omega^{\alpha+1}$ and let $T_\xi\in\allop(\ell_1(\tee_\xi^\star),X_{\tee_\xi^\star})$ denote the continuous linear extension of the formal identity map from $(c_{00}(\tee_\xi^\star),\Vert\cdot\Vert_{\ell_1(\tee_\xi^\star)})$ to $X_{\tee_\xi^\star}$. Then $T_\xi$ is universal for the class of non-$\alpha$-Szlenk operators with norm separable codomain.}
\end{remark}

We conclude the current section with some observations regarding the aforementioned universal operator theorems of Johnson and Lindenstrauss-Pe{\l}czy{\'n}ski. In particular, we note the following corollaries of \cref{LPthm,Jthm}. These results appear in \textcite{Johnson1971a} and \textcite{Lindenstrauss1968}, respectively, under the stronger hypothesis that $Y$ is norm separable. 
\begin{corollary}\label{LPcor}
Let $X$ and $Y$ be Banach spaces such that $Y$ has weak$^\ast$-sequentially compact dual ball and let $T\in\allop(X,Y)$ be non-weakly compact. Then $T$ factors the summation operator $(a_n)_{n=1}^\infty \mapsto (\sum_{i=1}^n a_i)_{n=1}^\infty$ from $\ell_1$ to $c$.
\end{corollary}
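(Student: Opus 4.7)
The plan is to combine Lindenstrauss-Pe{\l}czy{\'n}ski's universality theorem (Theorem~\ref{LPthm}) with the continuous-selection technique underlying Proposition~\ref{kaktorfar}, using the weak$^\ast$-sequential compactness of $B_{Y^\ast}$ to replace an $\ell_\infty$-valued factorization by a $c$-valued one. Under the canonical identification $c\cong C(\omega+1)$, the summation operator $\sigma\colon\ell_1\to c$ is precisely the unique operator in $\allop(\ell_1,C(\omega+1))$ sending $e_n$ to $\chi_{[n,\omega]}$, where the interval $[n,\omega]:=\{n,n+1,\ldots,\omega\}$ is clopen in $\omega+1$ equipped with its order topology. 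So it suffices to produce Proposition~\ref{kaktorfar}-data with $K=\omega+1$, index set $\nat$, and these clopen sets.

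First I would apply Theorem~\ref{LPthm} to obtain a factorization of the $\ell_\infty$-valued summation operator $\Sigma_\tee$ through $T$, where $\tee = (\nat,\leq)$. Proposition~\ref{factorkar} then yields $\delta>0$ and families $(x_n)_{n=1}^\infty\subseteq B_X$ and $(x_n^\ast)_{n=1}^\infty\subseteq T^\ast(B_{Y^\ast})$ satisfying $\langle x_n^\ast,x_m\rangle=\langle x_m^\ast,x_m\rangle\geq\delta$ whenever $m\leq n$ and $\langle x_n^\ast,x_m\rangle=0$ whenever $m>n$. For each $n$ choose $y_n^\ast\in B_{Y^\ast}$ with $T^\ast y_n^\ast=x_n^\ast$. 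Invoking the weak$^\ast$-sequential compactness of $B_{Y^\ast}$, extract a subsequence along which $(y_n^\ast)$ converges weak$^\ast$ to some $y^\ast\in B_{Y^\ast}$; after relabelling (which preserves the biorthogonality relations since subsequencing respects the order $\leq$), I may assume $y_n^\ast\to y^\ast$ weak$^\ast$.

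Finally, define $\Xi\colon\omega+1\to Y^\ast$ by $\Xi(n)=y_n^\ast$ for $n<\omega$ and $\Xi(\omega)=y^\ast$. This map is weak$^\ast$-continuous because the topology of $\omega+1$ is exactly that of the convergent sequence $(n)_{n<\omega}\to\omega$. For $k<\omega$ the biorthogonality gives $\langle\Xi(k),Tx_n\rangle=\langle x_k^\ast,x_n\rangle$, which equals $\langle x_n^\ast,x_n\rangle$ if $k\geq n$ (equivalently $k\in[n,\omega]$) and equals $0$ if $k<n$; since the sequence $(\langle x_k^\ast,x_n\rangle)_{k\geq n}$ is eventually constant, the weak$^\ast$-convergence also yields $\langle\Xi(\omega),Tx_n\rangle=\langle x_n^\ast,x_n\rangle$. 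Setting $\delta_n:=\langle x_n^\ast,x_n\rangle\geq\delta$, condition (ii) of Proposition~\ref{kaktorfar} is verified with $K=\omega+1$, index set $\nat$, and clopen sets $K_n=[n,\omega]$, and the conclusion is that $\sigma$ factors through $T$. The only even mildly delicate point is the verification of $\Xi(\omega)$'s behaviour at the compactification point, which is immediate from weak$^\ast$-convergence and the tail structure of the biorthogonality relations.
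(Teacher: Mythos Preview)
Your argument is correct and follows the same underlying idea as the paper's proof: invoke Theorem~\ref{LPthm}, pull back the coordinate functionals to $Y^\ast$, use weak$^\ast$-sequential compactness of $B_{Y^\ast}$ to extract a weak$^\ast$-convergent subsequence, and observe that along that subsequence the factorization lands in $c$ rather than $\ell_\infty$. The only difference is packaging. The paper works directly with the operators: given $VTU=\Sigma$ it takes the subsequence $(V^\ast f_{n_k}^\ast)_k$, sets $Ae_k=Ue_{n_k}$ and $By=(\langle V^\ast f_{n_k}^\ast,y\rangle)_k$, and checks $BTA$ is the summation operator into $c$ by a one-line computation. You instead route the same data through the paper's abstract machinery, first extracting the biorthogonal families via Proposition~\ref{factorkar} and then reassembling the factorization via Proposition~\ref{kaktorfar} with $K=\omega+1$. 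Your approach has the virtue of illustrating how those two propositions encapsulate the relevant factorization criteria, while the paper's direct argument is shorter and avoids the overhead of identifying $c$ with $C(\omega+1)$.
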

\begin{proof} 
By \cref{LPthm} there exist $U\in\allop(\ell_1,X)$ and $V\in\allop(Y,\ell_\infty)$ such that $VTU$ is the summation operator from $\ell_1$ to $\ell_\infty$. For $n\in\nat$ let $f_n^\ast$ denote the $n$th coordinate functional on $\ell_\infty$; that is, $f_n^\ast (f) =f(n)$ for every $f\in\ell_\infty$.
Since $Y$ has weak$^\ast$-sequentially compact dual ball there is a weak$^\ast$-convergent subsequence $(V^\ast f_{n_k}^\ast)_{k=1}^\infty$ of $(V^\ast f_{n}^\ast)_{n=1}^\infty$. Define $A\in\allop( \ell_1, X)$ by setting $Ae_k = Ue_{n_k}$ for each $k\in\nat$ a define $B\in\allop(Y,c)$ by setting $By=(\langle V^\ast f_{n_k}^\ast, y\rangle)_{k=1}^\infty$ for each $y\in Y$. For $k,l\in\nat$ we have \[
(BTAe_k)(l) = \langle V^\ast f_{n_l}^\ast, TAe_k\rangle =\langle f_{n_l}^\ast, VTUe_{n_k}\rangle = \begin{cases}
1,&l\geq k\\
0,&l<k
\end{cases},
\]
hence $BTA$ coincides with the summation operator from $\ell_1$ to $c$.
\end{proof}

\begin{corollary}\label{Jcor}
Let $X$ and $Y$ be Banach spaces such that $Y$ has weak$^\ast$-sequentially compact dual ball and let $T\in\allop(X,Y)$ be non-compact. Then $T$ factors the formal identity mapping from $\ell_1$ to $c_0$.
\end{corollary}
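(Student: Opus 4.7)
The plan is to imitate the proof of Corollary~\ref{LPcor} almost verbatim, substituting Theorem~\ref{Jthm} for Theorem~\ref{LPthm}, but with one additional twist to ensure that the extracted subsequence of coordinate functionals produces an operator into $c_0$ rather than merely into $c$.

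First I would apply Theorem~\ref{Jthm} to obtain $U\in\allop(\ell_1,X)$ and $V\in\allop(Y,\ell_\infty)$ such that $VTU$ is the formal identity from $\ell_1$ to $\ell_\infty$; equivalently, $\langle f_n^\ast,VTUe_k\rangle=\delta_{n,k}$, where $f_n^\ast\in B_{\ell_\infty^\ast}$ denotes evaluation at $n$. The sequence $(V^\ast f_n^\ast)_{n=1}^\infty$ lies in $\Vert V\Vert B_{Y^\ast}$, and since $B_{Y^\ast}$ is assumed weak$^\ast$-sequentially compact, I may extract a weak$^\ast$-convergent subsequence $(V^\ast f_{m_n}^\ast)_{n=1}^\infty$.

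The key subtlety is that the naive definition $y\mapsto(\langle V^\ast f_{m_k}^\ast,y\rangle)_{k=1}^\infty$ only takes values in $c$, not $c_0$, so the argument of Corollary~\ref{LPcor} produces the summation operator instead of the formal identity. To force the coordinates to vanish at infinity I would pass to a consecutive-difference sequence: set $n_k=m_{2k}$, $n_k'=m_{2k-1}$ (all distinct, as $(m_n)$ is strictly increasing), and let $g_k^\ast:=V^\ast f_{n_k}^\ast-V^\ast f_{n_k'}^\ast$, so that $g_k^\ast\stackrel{w^\ast}{\longrightarrow}0$. Define $A\in\allop(\ell_1,X)$ by $Ae_k=\tfrac{1}{2}U(e_{n_k}-e_{n_k'})$ and $B\in\allop(Y,c_0)$ by $By=(\langle g_k^\ast,y\rangle)_{k=1}^\infty$; the latter genuinely takes values in $c_0$ by the weak$^\ast$ null property of $(g_k^\ast)$. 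A direct computation, using $VTU e_j=\chi_{\{j\}}$ and the pairwise distinctness of the indices $\{n_k,n_k'\}_{k\geq 1}$, gives
\[
(BTAe_k)(l)=\tfrac{1}{2}\langle f_{n_l}^\ast-f_{n_l'}^\ast,\,e_{n_k}-e_{n_k'}\rangle=\delta_{k,l},
\]
so that $BTA$ is exactly the formal identity $\ell_1\to c_0$.

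There is no substantive obstacle here beyond recognising that a direct transcription of the argument of Corollary~\ref{LPcor} delivers an operator into $c$ rather than $c_0$; the difference-sequence trick at the extraction step is the only new ingredient, and everything else is a routine matter of checking that the computation goes through verbatim.
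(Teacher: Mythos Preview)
Your proof is correct and follows the same overall route as the paper: apply Theorem~\ref{Jthm}, use weak$^\ast$-sequential compactness of $B_{Y^\ast}$ to extract a subsequence, and then modify so that the resulting operator lands in $c_0$ rather than $c$. The only difference is in how the weak$^\ast$-null sequence of functionals is produced. You take pairwise differences $g_k^\ast=V^\ast f_{m_{2k}}^\ast-V^\ast f_{m_{2k-1}}^\ast$ and correspondingly use $Ae_k=\tfrac12 U(e_{m_{2k}}-e_{m_{2k-1}})$. The paper instead simply subtracts the weak$^\ast$ limit: if $V^\ast f_{n_k}^\ast\stackrel{w^\ast}{\to}y^\ast$, it sets $By=(\langle V^\ast f_{n_k}^\ast-y^\ast,y\rangle)_k$ and $Ae_k=Ue_{n_k}$, then observes that $\langle y^\ast,TUe_{n_k}\rangle=\lim_l\langle f_{n_l}^\ast,VTUe_{n_k}\rangle=0$, so the correction term $y^\ast$ contributes nothing on the relevant vectors and $BTA$ is the formal identity. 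The paper's device is marginally cleaner (no halving, no doubling of indices), but both are standard and equally valid.
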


\begin{proof}
By \cref{Jthm} there exist $U\in\allop(\ell_1,X)$ and $V\in\allop(Y,\ell_\infty)$ such that $VTU$ is the formal identity mapping from $\ell_1$ to $\ell_\infty$. For $n\in\nat$ let $f_n^\ast$ denote the $n$th coordinate functional on $\ell_\infty$.
Since $Y$ has weak$^\ast$-sequentially compact dual ball there is a  subsequence $(V^\ast f_{n_k}^\ast)_{k=1}^\infty$ of $(V^\ast f_{n}^\ast)_{n=1}^\infty$ converging weak$^\ast$ to some $y^\ast\in Y^\ast$. Define $A\in\allop( \ell_1, X)$ by setting $Ae_k = Ue_{n_k}$ for each $k\in\nat$ and define $B\in\allop(Y,c_0)$ by setting $By=(\langle V^\ast f_{n_k}^\ast - y^\ast, y\rangle)_{k=1}^\infty$ for each $y\in Y$. Since \[ \forall\,k\in\nat\quad \langle y^\ast , TUe_{n_k}\rangle = \lim_{l\rightarrow\infty} \langle V^\ast f_{n_l}^\ast , TU{e_{n_k}}\rangle= \lim_{l\rightarrow\infty}\langle f_{n_l}^\ast , VTU{e_{n_k}}\rangle =0,\] it follows that for $k,l\in\nat$ we have\[
(BTAe_k)(l)= (BTUe_{n_k})(l) = \langle V^\ast f_{n_l}^\ast-y^\ast, TUe_{n_k}\rangle =\langle f_{n_l}^\ast, VTUe_{n_k}\rangle = \begin{cases}
1,&l= k\\
0,&l\neq k
\end{cases},
\]
hence $BTA$ coincides with the formal identity mapping from $\ell_1$ to $c_0$.
\end{proof}

We conclude the current section of the paper with the following open question.
\begin{question} Does the statement of \cref{sepseekay} remain true if the condition that $Y$ be norm separable is relaxed and $Y$ is only assumed to have weak$^\ast$-sequentially compact dual ball?
\end{question}

\section{Uncountable Szlenk indices and universality}\label{nonspear}

The proof of \cref{countuniv} makes use of the fact that an operator $T$ can fail to be $\alpha$-Szlenk for a given ordinal $\alpha$ in essentially two different ways. More precisely, it can be that $Sz(T)$ is defined and larger than $\omega^\alpha$, or it can be that $Sz(T)$ is undefined. In particular, a non $\alpha$-Szlenk operator can be either Asplund or non-Asplund. This observation and the first assertion of \cref{countuniv} lead naturally to the following open question.
\begin{question}\label{openquest}
Let $\alpha\geq \omega_1$ be an uncountable ordinal. Does $\asplundop\cap\complement\szlenkop_\alpha$ admit a universal element?
\end{question}

In light of the approach taken in \cref{countuniv} to prove the existence of universal elements of $\complement\szlenkop_\alpha$ for $\alpha<\omega_1$, it is natural to guess that a first step towards answering \cref{openquest} could involve consideration of operators of the form $\Sigma_\tee$, where $(\tee,\preceq)$ is a tree, and extending the definition of a blossomed tree (c.f. \cref{gaspardefn}) to the uncountable setting as follows: say that a tree $(\tee,\preceq)$ is \textit{blossomed} if it is rooted, well-founded, and for every $t\in\tee\setminus\MAX(\tee)$ there exists a bijection $\psi_t: \max\{ \omega, cof(\rho_\tee(t))\}\longrightarrow \tee[t+]$ such that $\zeta\leq\zeta'<\max\{ \omega, cof(\rho_\tee(t))\}$ implies $\rho_\tee(\psi_t(\zeta))\leq \rho_\tee(\psi_t(\zeta'))$. Examples of such trees $\tee$ with $\rho(\tee)=\xi+1$ for a given ordinal $\xi$ may be obtained via a similar construction to that provided in \cref{blossexist}, but with $\tee$ consisting of finite sequences of ordinals (ordered by extension, as in \cref{blossexist}). Moreover, under this more general definition of a blossomed tree, a blossomed tree $(\tee,\preceq)$ satisfies $\rho(\tee)<\omega_1$ if and only if $\tee$ is countable and satisfies the usual definition of blossomed tree given in \cref{gaspardefn}. The natural candidate for a universal element of $\asplundop\cap\complement\szlenkop_{\omega_1}$ under this approach is $\Sigma_\ess$, where for each $\alpha<\omega_1$ we let $(\tee_\alpha,\preceq_\alpha)$ be a blossomed tree with $\rho(\tee_\alpha)=\alpha+1$ and set $\ess= \bigcup_{\alpha<\omega_1}(\{ \alpha\} \times \tee_\alpha)$, with an order $\preceq$ on $\ess$ defined by setting $(\alpha,t)\preceq(\alpha',t')$ if and only if $\alpha=\alpha'$ and $t\preceq_\alpha t'$. We do not know the answer to \cref{openquest}, even in the case $\alpha=\omega_1$. However, as we shall now see, operators of the form $\Sigma_\tee$, where $(\tee,\preceq)$ is a well-founded tree - cannot be expected to provide absolute examples of universal elements of the classes $\asplundop\cap\complement\szlenkop_\alpha$ for $\alpha\geq\omega_1$ in general. In particular, we shall see that it is consistent with ZFC that the operator $\Sigma_\ess$ defined above is not universal for $\asplundop\cap\complement\szlenkop_{\omega_1}$.

Let $Z$ be a Banach space and $(\tee,\preceq)$ a tree. Let \[ \mathcal{O}_\tee=\{ t\in\tee\mid ht_\tee(t)=0 \mbox{ or }ht_\tee(t)\mbox{ is a successor}\},\] noting that $\mathcal{O}_\tee=\tee$ if and only if $ht(\tee)\leq\omega$. In particular, $\mathcal{O}_\tee=\tee$ whenever $\tee$ is well-founded. If $\Sigma_\tee$ factors through $Z$ then, by \cref{factorkar}, there exist $\delta>0$, $(x_t)_{t\in\tee}\subseteq Z$ and $(x_t^\ast)_{t\in\tee}\subseteq Z^\ast$ satisfying \cref{deltadelta}. It follows that $Z$ admits a biorthogonal system of cardinality $\vert\mathcal{O}_\tee\vert$, namely $(x_t, z_t^\ast)_{t\in\mathcal{O}_\tee}$, where 
\[ z_t^\ast = \begin{cases}x_t^\ast-x_{t^-}^\ast & \mbox{if }ht_\tee(t)>0\\ x_t^\ast& \mbox{if }ht_\tee(t)=0\end{cases}, \quad t\in\mathcal{O}_\tee.\]
(For background on biorthogonal systems in Banach spaces we refer the reader to the book \textcite{Hajek2008}.) The following proposition is now immediate.
\begin{proposition}\label{nononon}
Let $Z$ be a Banach space not admitting an uncountable biorthogonal system and let $(\tee,\preceq)$ be a tree such that $\mathcal{O}_\tee$ is uncountable. Then $\Sigma_\tee$ does not factor through $Z$.
\end{proposition}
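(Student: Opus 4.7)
The plan is to prove the contrapositive. Assume that $\Sigma_\tee$ factors through $Z$, i.e.\ that there exist $U\in\allop(\ell_1(\tee),Z)$ and $V\in\allop(Z,\ell_\infty(\tee))$ with $VU=\Sigma_\tee$. Applying Proposition~\ref{factorkar} to the identity operator of $Z$ yields $\delta>0$ and families $(x_t)_{t\in\tee}\subseteq B_Z$ and $(x_t^\ast)_{t\in\tee}\subseteq B_{Z^\ast}$ satisfying (\ref{deltadelta}). Following the recipe in the paragraph preceding the proposition, define $z_t^\ast$ for $t\in\mathcal{O}_\tee$ as in the statement and set $\tilde z_t^\ast := \langle x_t^\ast, x_t\rangle^{-1}z_t^\ast$. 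Since $\langle x_t^\ast, x_t\rangle \geq \delta$ for every $t\in\tee$, the family $(\tilde z_t^\ast)_{t\in\mathcal{O}_\tee}$ is norm-bounded in $Z^\ast$, while $(x_t)_{t\in\mathcal{O}_\tee}$ is already contained in $B_Z$.

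The main task is then to verify biorthogonality, namely that $\langle \tilde z_t^\ast, x_s\rangle$ equals $1$ when $s=t$ and $0$ otherwise, for all $s,t\in\mathcal{O}_\tee$. This is a routine case analysis based on (\ref{deltadelta}). When $ht_\tee(t)=0$, $t$ is minimal in $\tee$, so $s\preceq t$ is equivalent to $s=t$ and the biorthogonality at $t$ is immediate. When $ht_\tee(t)>0$, the key observation is that $t^-$ exists and is the maximum of the well-ordered chain $\tee[\prec t]$; hence $s\prec t$ forces $s\preceq t^-$, so that $\langle x_t^\ast,x_s\rangle$ and $\langle x_{t^-}^\ast,x_s\rangle$ both equal $\langle x_s^\ast,x_s\rangle$ and cancel in $z_t^\ast$. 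The remaining two cases $s=t$ (where $\langle x_{t^-}^\ast,x_t\rangle=0$ because $t\not\preceq t^-$) and $s\not\preceq t$ (where both summands vanish by (\ref{deltadelta})) are equally direct.

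Producing this bounded biorthogonal system $(x_t,\tilde z_t^\ast)_{t\in\mathcal{O}_\tee}$, which by hypothesis on $\tee$ has uncountable cardinality, then contradicts the standing assumption on $Z$ and completes the proof. There is no substantive obstacle here: the entire argument is a careful recording of what the preamble already describes as ``immediate,'' and the only structural fact used about $\mathcal{O}_\tee$ beyond its cardinality is that each $t\in\mathcal{O}_\tee$ with $ht_\tee(t)>0$ admits an immediate predecessor $t^-$.
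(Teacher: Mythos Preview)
Your proof is correct and follows precisely the approach the paper takes in the paragraph immediately preceding the proposition (which is why the paper declares the proposition ``immediate''). The only cosmetic differences are that you explicitly normalise via $\tilde z_t^\ast = \langle x_t^\ast,x_t\rangle^{-1}z_t^\ast$ and spell out the case analysis for biorthogonality, whereas the paper leaves both steps implicit.
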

It is consistent with ZFC that there exists an Asplund space $W$ with $Sz(W)>\omega_1$ and $W$ does not admit an uncountable biorthogonal system. Thus, by \cref{nononon}, it is consistent with ZFC that the operator $\Sigma_\ess$ defined earlier in the current section is not universal for $\asplundop\cap\complement\szlenkop_{\omega_1}$. An example of a compact Hausdorff space $K$ such that $C(K)$ is such a space $W$ was constructed in the 1970s by Kunen, though the construction was not published until much later by Negrepontis\autocite{Negrepontis1984}. (For further historical remarks concerning the existence of uncountable biorthogonal systems, see Remark 4 of \textcite{Todorcevic2006}.) Since a Banach space $C(L)$ is Asplund if and only if $L$ is scattered\autocite{Namioka1975}, the space $C(K)$ arising from Kunen's construction is Asplund. Moreover, the Cantor-Bendixson rank of Kunen's space $K$ is larger than $\omega_1$. Thus, the $C(K)$ space arising from Kunen's construction is indeed an example of such a space $W$ once we have observed the following fact: for $L$ a compact Hausdorff space the Szlenk index of $C(L)$ is bounded below by the Cantor-Bendixson rank of $L$. This is an easy consequence of the well-known fact that the mapping that takes $l\in L$ to the evaluation functional of $C(L)$ at $l$ is a homeomorphic embedding with respect to the weak$^\ast$ topology, and the image of $L$ under this embedding is a $1$-separated subset of $B_{C(L)^\ast}$. From this fact it is easy to see that $Sz(C(L),1)$ is bounded below by the Cantor-Bendixson rank of $L$, hence $Sz(C(L))$ is bounded below by the Cantor-Bendixson rank of $L$.

Finally, we mention a more recent construction of Brech and Koszmider\autocite{Brech2011}, who establish the consistency of a scattered compact Hausdorff space $J$ having Cantor-Bendixson rank equal to $\omega_2+1$ and such that $C(J)$ does not admit an uncountable biorthogonal system. If $\tee$ is a tree that is blossomed in the generalised sense introduced at the beginning of the current section, and if $\rho(\tee)=\omega_2+1$, then $Sz(\Sigma_\tee)=\omega_2\omega$. However, by \cref{nononon}, $\Sigma_\tee$ does not factor through the Brech-Koszmider space $C(J)$ which satisfies $Sz(C(J))\geq\omega_2\omega$.

\section*{Acknowledgements}
The author is grateful to Ryan Causey for pointing out the relevance of the references \textcite{Alspach2005} and \textcite{Beanland2018} to the current paper. The author also thanks the anonymous referee(s) for numerous helpful comments that have significantly improved the quality of the paper, including in particular for suggesting a significant simplification to the proof of \cref{zippinbase}.

\printbibliography

\end{document}